\documentclass[12pt]{article}
\usepackage{amsmath,amsthm,amssymb,amsfonts,bm,mathrsfs}
\usepackage{hyperref}

\setlength{\parskip}{12pt}
\normalsize
\newtheorem{theorem}{Theorem}
\newtheorem{definition}{Definition}
\newtheorem{corollary}{Corollary}
\newtheorem{lemma}{Lemma}
\newtheorem{example}{Example}
\newtheorem{conjecture}{Conjecture}
\newtheorem{question}{Question}
\renewcommand{\abstract}{{\noindent\small{\bf Abstract:}\quad}}

\begin{document}
\title{\bf $C^{*}$ algebra and inverse chaos 
\author{\small Luo Lvlin\thanks{E-mail:luoll12@mails.jlu.edu.cn and Adrress:Department of Mathematics,Jilin University, ChangChun,130012,P. R. China.}\quad and \quad\small Hou Bingzhe\thanks{E-mail:houbz@jlu.edu.cn and Adrress:Department of Mathematics,Jilin University, ChangChun,130012,P. R. China.}}\\
}
\date{January 24,~2015}
\maketitle

\begin{abstract}
If an invertible linear dynamical systems is Li-York chaotic or other chaotic, what's about it's inverse dynamics? what's about it's adjoint dynamics?
With this unresolved but basic problems,
this paper will give a criterion for Lebesgue operator on separable Hilbert space.
Also we give a criterion for the adjoint multiplier of Cowen-Douglas functions on $2$-th Hardy space.
Last we give some chaos about scalars perturbation of operator and some examples of invertible bounded linear operator such that $T$ is chaotic but $T^{-1}$ is not.
\\
{\noindent\small{\bf Keywords:}}
inverse, chaos, Hardy space, rooter function, Cowen-Douglas function, Spectrum, $C^{*}$ algebra, Lebesgue operator.
\end{abstract}
\begin{center}{\bf 1.Introduction}\end{center}

The ideas of chaos in connection with a map was introduced by Li T.Y.and his teacher Yorke,J.A.\cite{LiTYYorkeJA1975},
after that there are various definitions of what it means for a map to be chaotic and there is a series of papers on Topological Dynamics and Ergodic Theory about chaos, such as \cite{PeterWalters1982}\cite{IwanikA1989}\cite{JRBrowm1976}\cite{JohnMilnor2006}\cite{MShub1987}.

Following Topological Dynamics,Linear Dynamics is also a rapidly evolving branch of functional analysis,which was probably born in 1982 with the Toronto Ph.D.thesis of C.Kitai \cite{CKitai1982}.
It has become rather popular because of the efforts of many mathematicians,
for the seminal paper \cite{GGodefroy2003} by G.Godefroy and J.H.Shapiro,the notes \cite{JHShapiro2001} by J.H.Shapiro,the authoritative survey \cite{KGGrosseErdmann1999} by K.-G.Grosse-Erdmann,,and finally for the book \cite{FBayartEMatheron2009} by F.Bayart and E.Matheron,the book \cite{KGGrosseErdmannAPerisManguillot} by K.-G.Grosse-Erdmann and A.Peris.

For finite-dimension linear space,the authors have made a topologically conjugate classification about Jordan blocks in \cite{JWRobbin1972}\cite{NHKuiperJWRobbin1973}. So for the eigenvalues $|\lambda|\neq1$, the operators of Jordan block are not Li-Yorke chaotic.With \cite{FBayartEMatheron2009} we know that a Jordan block is not supercyclic when its eigenvalues $|\lambda|=1$, and following a easy discussion it is not Li-Yorke chaotic too.

So the definition of Li-Yorke chaos should be valid only on infinite-dimension Frechet space or Banach space such that in this paper the Hilbert space is infinite-dimensional.
Because a finite-dimensional linear operator could be regard as a compact operator on some Banach spaces or some Hilbert spaces,
we can get the same conclusion from \cite{NilsonCBernardesJrAntonioBonillaVladimirMullerApeiris2012}P12 or
the Theorem $7$ of \cite{HoubingzheTiangengShiluoyi2009}.

For a Frechet space $X$,let $\mathcal{L}(X)$ denote the set of all
bounded linear operators on $X$.
Let $\mathbb{B}$ denote a Banach space and let $\mathbb{H}$ denote a Hilbert space.
If $T\in\mathcal{L}(\mathbb{B})$,
then define $\sigma(T)=\{\lambda\in\mathbb{C};T-\lambda \text{ is not invertible}\}$
and define $r_{\sigma}(T)=\sup\{|\lambda|;\lambda\in\sigma(T)\}$.

\begin{definition}\label{liyorkehundundedingyi1}
Let $T\in\mathcal{L}(\mathbb{B})$,if there exists $x\in\mathbb{B}$ satisfies:

(1)$\varlimsup\limits_{n\to\infty}|T^{n}(x)\|>0$; and
(2)$\varliminf\limits_{n\to\infty}\|T^{n}(x)\|=0$.

Then we say that $T$ is Li-Yorke chaotic,and named $x$ is a Li-Yorke chaotic point of $T$,where $x\in\mathbb{B},n\in\mathbb{N}$.
\end{definition}

Define a distributional function $F_{x}^{n}(\tau)=\frac{1}{n}\sharp\{0\leq i\leq n:\|T^n(x)\|<\tau\}$,
where $T\in\mathcal{L}(\mathbb{B}),x\in\mathbb{B},n\in\mathbb{N}$.
And define

$F_{x}(\tau)=\liminf\limits_{n\to\infty} F_{x}^{n}(\tau)$; and
$F_{x}^{*}(\tau)=\limsup\limits_{n\to\infty} F_{x}^{n}(\tau)$.

\begin{definition}\label{fenbuhundundedingyi2}
Let $T\in\mathcal{L}(\mathbb{B})$,if there exists $x\in\mathbb{B}$ and

$(1)$ If $F_{x}(\tau)=0,\exists\tau>0$,
and $F_{x}^{*}(\epsilon)=1,\forall\epsilon>0$,
then we say that $T$ is distributional chaotic or $I$-distributionally chaotic .

$(2)$ If $F_{x}^{*}(\epsilon)>F_{x}(\tau),\forall\tau>0$,
and $F_{x}^{*}(\epsilon)=1,\forall\epsilon>0$,
then we say that $T$ is $II$-distributionally chaotic.

$(3)$ If $F_{x}^{*}(\epsilon)>F_{x}(\tau),\forall\tau>0$,then we say that $T$ is $III$-distributionally chaotic.
\end{definition}

\begin{definition}[\cite{NilsonCBernardesJrAntonioBonillaVladimirMullerApeiris2012}]\label{liyorkechaoscriteriondingyi0}
Let $X$ is an arbitrary infinite-dimensional separable Frechet space,
$T\in\mathcal{L}(X)$,If there exists a subset $X_0$ of $X$ satisfies:

$(1)$ For any $x\in X_0,\{T^nx\}_{n=1}^{\infty}$ has a subsequence converging to $0$;

$(2)$ There is a bounded sequence $\{a_n\}_{n=1}^{\infty}$ in $\overline{span(X_0)}$ such that the sequence $\{T^na_n\}_{n=1}^{\infty}$ is unbounded.

Then we say $T$ satisfies the Li-Yorke Chaos Criterion.
\end{definition}

\begin{theorem}[\cite{NilsonCBernardesJrAntonioBonillaVladimirMullerApeiris2012}]\label{liyorkechaoscriteriondingli0}
Let $X$ is an arbitrary infinite-dimensional separable Frechet space,
If $T\in\mathcal{L}(X)$,then the following assertions are equivalent.

$(i)$ $T$ is Li-Yorke chaotic;

$(ii)$ $T$ satisfies the Li-Yorke Chaos Criterion.
\end{theorem}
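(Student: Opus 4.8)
The plan is to prove the two implications separately, viewing the statement as the assertion that Li-Yorke chaos of a linear operator is equivalent to the existence of a single vector whose orbit clusters at $0$ while failing to converge to $0$. The implication $(i)\Rightarrow(ii)$ I would handle by an explicit construction, while $(ii)\Rightarrow(i)$ I would obtain by a Baire category argument carried out inside the closed subspace $Y=\overline{span(X_0)}$, which is itself Frechet and hence a Baire space.

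For $(i)\Rightarrow(ii)$, I would start from a Li-Yorke chaotic point $x_0$, so that $\liminf_n\|T^nx_0\|=0$ and $\limsup_n\|T^nx_0\|=c$ with $0<c\le\infty$. Choose times $n_k\nearrow\infty$ with $\|T^{n_k}x_0\|\le 4^{-k}$ and set $y_k=2^kT^{n_k}x_0$, so that $\|y_k\|\le 2^{-k}\to 0$. Since $\liminf$ and $\limsup$ of a sequence are unchanged on passing to a tail, each $y_k$ satisfies $\liminf_m\|T^my_k\|=2^k\cdot 0=0$ and $\limsup_m\|T^my_k\|=2^k c$. Taking $X_0=\{y_k:k\ge 1\}$, property $(1)$ of the criterion holds at once. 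For property $(2)$, for each $k$ pick $p_k$ (strictly increasing in $k$) with $\|T^{p_k}y_k\|$ of order $2^k$, set $a_{p_k}=y_k$ and $a_n=0$ otherwise; then $\{a_n\}\subseteq span(X_0)$ is bounded (indeed $\to 0$) while $\|T^{p_k}a_{p_k}\|\to\infty$, so $\{T^na_n\}$ is unbounded, as required.

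For $(ii)\Rightarrow(i)$, I would work in $Y=\overline{span(X_0)}$; note that $Y$ need not be $T$-invariant, but this is harmless, since the iterates $T^n$ are only ever applied to elements of $Y$ and measured by the ambient seminorms. Let $A=\{x\in Y:\liminf_n\|T^nx\|=0\}=\bigcap_j\bigcup_n\{x\in Y:\|T^nx\|<1/j\}$. Each set in the union is open, and their union contains $span(X_0)$: any finite combination $\sum c_ix_i$ with $x_i\in X_0$ has, by property $(1)$ and a diagonal extraction over the finitely many $x_i$, a common subsequence of times along which every $T^mx_i\to 0$, forcing $\liminf_m\|T^m\sum c_ix_i\|=0$. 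Hence each $\bigcup_n\{x:\|T^nx\|<1/j\}$ is dense open and $A$ is residual. Next put $B=\{x\in Y:\sup_n\|T^nx\|=\infty\}=\bigcap_N\bigcup_n\{x\in Y:\|T^nx\|>N\}$, again a $G_\delta$ of open sets.

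The hard part will be showing that each $\bigcup_n\{x\in Y:\|T^nx\|>N\}$ is dense in $Y$; this is exactly where property $(2)$ is consumed. If some such set failed to be dense it would miss an open ball $B(x_0,r)\cap Y$, and differencing out the center would force $\|T^nh\|\le 2N$ for all $n$ and all $h\in Y$ with $\|h\|<r$; by homogeneity of the seminorms this is a Banach--Steinhaus type equicontinuity of $\{T^n|_Y\}$, which would make $\{T^na_n\}$ bounded for every bounded sequence $\{a_n\}\subseteq Y$, contradicting property $(2)$. Granting this, $B$ is residual, so by the Baire Category Theorem $A\cap B\neq\varnothing$; any $x$ in this intersection satisfies $\liminf_n\|T^nx\|=0$ and $\limsup_n\|T^nx\|=\infty>0$, hence is a Li-Yorke chaotic point in the sense of Definition \ref{liyorkehundundedingyi1}, completing the equivalence. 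I expect the only genuine obstacle to be making this uniform-boundedness step fully rigorous in the Frechet (seminorm) setting, together with the routine but careful diagonal extractions used for $A$.
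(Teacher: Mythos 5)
The paper offers no proof of this statement at all—it is quoted as a black box from \cite{NilsonCBernardesJrAntonioBonillaVladimirMullerApeiris2012}—so there is nothing internal to compare against; I can only judge your argument on its own terms. Your architecture is the standard one: the $(i)\Rightarrow(ii)$ rescaling construction is correct (at least on a Banach space; on a genuine Frechet space the F-norm is not homogeneous, so identities like $\|2^{k}y\|=2^{k}\|y\|$ and the scaling step in your Banach--Steinhaus argument must be rerun with the defining seminorms, which is routine but not free), and for $(ii)\Rightarrow(i)$ the plan of intersecting two residual $G_{\delta}$ subsets of $Y=\overline{span(X_0)}$ is exactly right, as is your treatment of the unbounded-orbit set $B$ via uniform boundedness.

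The genuine gap is your density claim for $A=\{x\in Y:\liminf_n\|T^nx\|=0\}$. Condition $(1)$ of Definition \ref{liyorkechaoscriteriondingyi0} gives each $x\in X_0$ its \emph{own} subsequence along which $T^nx\to0$, and two such subsequences can be disjoint: take $T=T_1\oplus T_2$ on $\ell^2\oplus\ell^2$ with $\|T_1^nx_1\|$ small only for even $n$ and $\|T_2^nx_2\|$ small only for odd $n$, and $X_0=\{(x_1,0),(0,x_2)\}$; then $\liminf_n\|T^n(x_1,x_2)\|>0$. So ``a diagonal extraction over the finitely many $x_i$'' does not produce a common subsequence, and $span(X_0)\subseteq A$ is simply false in general—your proof collapses at exactly this step. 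The repair is to argue by contradiction: assume $T$ is \emph{not} Li-Yorke chaotic. Then for every vector, $\liminf_n\|T^nx\|=0$ forces $T^nx\to0$ (otherwise $x$ would itself be a Li-Yorke chaotic point), so condition $(1)$ upgrades itself to convergence along the \emph{full} sequence for every $x\in X_0$, hence by linearity for every $x\in span(X_0)$. Now $A$ genuinely contains a dense subspace of $Y$, is residual, and meets your residual set $B$; the resulting vector has $\liminf_n\|T^nx\|=0$ and unbounded orbit, contradicting the assumption. With that one fix, plus the Frechet seminorm bookkeeping, your argument becomes a correct proof.
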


\begin{lemma}[\cite{SRolewicz1969}]\label{youxianweikongjianmeichaoxunhuanxing2}
There are no hypercyclic operators on a finite-dimensional space $X\neq0$.
\end{lemma}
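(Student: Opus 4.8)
The plan is to argue by contradiction, exploiting the fact that on a nonzero finite-dimensional complex space every operator has an eigenvalue, and that a geometric orbit $\{\lambda^{n}c\}_{n\ge 0}$ can never fill the plane. Suppose $T\in\mathcal{L}(X)$ with $1\le\dim X<\infty$ is hypercyclic, and let $u\in X$ have dense orbit $\{T^{n}u\}_{n\ge 0}$. Since $X$ is finite-dimensional over $\mathbb{C}$, the transpose $T^{*}$ acting on the dual space $X^{*}$ has the same characteristic polynomial as $T$, hence a root $\lambda\in\mathbb{C}$; thus there is a nonzero functional $\phi\in X^{*}$ with $\phi\circ T=\lambda\phi$, i.e. $\lambda$ is an eigenvalue of $T^{*}$.

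First I would push $\phi$ through the orbit. From $\phi\circ T=\lambda\phi$ one gets by induction $\phi(T^{n}u)=\lambda^{n}\phi(u)$ for every $n\ge 0$, so the image of the orbit under $\phi$ is the single geometric sequence $\{\lambda^{n}\phi(u):n\ge 0\}$. Because $\phi$ is a nonzero linear functional on a finite-dimensional space it is continuous and surjective onto $\mathbb{C}$; consequently $\mathbb{C}=\phi(X)=\phi\bigl(\overline{\{T^{n}u\}}\bigr)\subseteq\overline{\phi(\{T^{n}u\})}$, which forces $\{\lambda^{n}\phi(u):n\ge 0\}$ to be dense in $\mathbb{C}$.

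The contradiction then comes from a short modulus computation, setting $c=\phi(u)$. If $c=0$ or $\lambda=0$ the set is finite. If $0<|\lambda|\le 1$ then $|\lambda^{n}c|\le|c|$ for all $n$, so the orbit lies in the closed disk of radius $|c|$; if $|\lambda|>1$ then $|\lambda^{n}c|\ge|c|$ for all $n$, with strict inequality for $n\ge 1$, so no point other than $c$ lies in the closed disk of radius $|c|$ and the origin is never approached. In every case $\{\lambda^{n}c\}$ is confined to a disk, to the complement of a disk, or to a circle, and is therefore not dense in $\mathbb{C}$ --- contradicting the previous paragraph. Hence no such $T$ exists.

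The hard part will be only the bookkeeping at the edges of this argument rather than any deep idea: one must confirm that a nonzero functional really is open and surjective (immediate in finite dimensions), and, if one wishes to allow real scalars, that the eigenvalue step survives. Over $\mathbb{R}$ the transpose may have no real eigenvalue, so there I would complexify, passing to $X\otimes\mathbb{C}$ and $T\otimes\mathrm{id}$, choose a complex eigenfunctional $\phi$, and check that its restriction to the (dense) real orbit still yields a geometric sequence whose moduli obey the same confinement; this reduction is the one genuinely technical point, and the density-versus-geometric-growth dichotomy of the last paragraph is exactly what makes it close.
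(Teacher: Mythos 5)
Your proof is correct, but note that the paper does not actually prove this statement: it is imported verbatim as a cited result of Rolewicz, so there is no internal argument to compare yours against. What you give is the standard modern proof (essentially Kitai's observation that an operator whose adjoint has an eigenvalue cannot be hypercyclic, combined with the fact that in finite dimensions over $\mathbb{C}$ the transpose always has one), and every step checks out: $\phi\circ T=\lambda\phi$ gives $\phi(T^{n}u)=\lambda^{n}\phi(u)$; continuity and surjectivity of the nonzero functional $\phi$ turn density of the orbit into density of the geometric sequence $\{\lambda^{n}c\}$ in $\mathbb{C}$; and the modulus trichotomy ($c=0$ or $\lambda=0$ gives a finite set, $|\lambda|\le 1$ traps the sequence in $\overline{B(0,|c|)}$, $|\lambda|>1$ with $c\neq 0$ excludes it from the open disk $B(0,|c|)$) rules out density in each case. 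You are also right that the only genuine technical wrinkle is the real-scalar case, where the transpose may lack a real eigenvalue; your complexification sketch works because a nonzero complex-linear $\phi$ on $X\otimes\mathbb{C}$ restricts to a nonzero real-linear map on $X$ whose image is a line or all of $\mathbb{C}$, and the same boundedness/exclusion dichotomy contradicts density in that image. For the purposes of this paper, which works over $\mathbb{C}$ throughout, the complex case alone suffices, so your argument fully establishes the lemma that the authors only quote.
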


\begin{example}[\cite{FBayartEMatheron2009}P8]\label{chaoxunhuanyudanweikaiyuanpanjiaofeikongdeyinyongyinli34}
Let $\phi\in\mathcal{H}^{\infty}(\mathbb{D})$ and let
$M_{\phi}:\mathcal{H}^{2}(\mathbb{D})\to\mathcal{H}^{2}(\mathbb{D})$ be the associated multiplication operator.
The adjoint multiplier $M_{\phi}^{*}$ is hypercyclic if and only if $\phi$ is non-constant and $\phi(\mathbb{D})\bigcap\mathbb{T}\neq\emptyset$.
\end{example}

\begin{theorem}[\cite{GDBirkhoff1922}]\label{chaoxunhuanxingjiushichuandixing3}
Let $X$ is an arbitrary separable Frechet space,
$T\in\mathcal{L}(X)$.The following assertions are equivalent.

$(i)$ $T$ is hypercyclic.

$(ii)$ $T$ is topologically transitive;
that is,for each pair of non-empty open sets $U,V\subseteq X$,
there exists $n\in\mathbb{N}$ such that $T^n(U)\bigcap V\neq\emptyset$.
\end{theorem}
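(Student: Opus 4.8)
The plan is to prove the two implications separately, with $(i)\Rightarrow(ii)$ being a short density argument and $(ii)\Rightarrow(i)$ resting on the Baire category theorem, which is legitimate here since a Fr\'echet space is completely metrizable and hence a Baire space.

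For $(i)\Rightarrow(ii)$, I would start from a hypercyclic vector $x$, i.e. one whose orbit $\{T^n x : n\geq 0\}$ is dense in $X$. Given nonempty open sets $U,V$, density provides $m\geq 0$ with $T^m x \in U$. The key small observation is that a nontrivial topological vector space has no isolated points, so deleting the finitely many points $x,Tx,\dots,T^m x$ from the dense orbit leaves a dense set; therefore there is $k>m$ with $T^k x\in V$. Setting $n=k-m$ gives $T^n(T^m x)=T^k x\in V$ while $T^m x\in U$, so $T^n(U)\cap V\neq\emptyset$, which is exactly topological transitivity.

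For $(ii)\Rightarrow(i)$, I would exploit separability to fix a countable base $\{V_j\}_{j\in\mathbb{N}}$ of nonempty open sets for the topology of $X$. The central point is that $x$ is hypercyclic precisely when its orbit meets every $V_j$, which lets me rewrite the set of hypercyclic vectors as
\[ HC(T)=\bigcap_{j\in\mathbb{N}}\ \bigcup_{n\geq 0} T^{-n}(V_j). \]
Each set $G_j:=\bigcup_{n\geq 0}T^{-n}(V_j)$ is open because $T$ is continuous. I would then show $G_j$ is dense: for an arbitrary nonempty open $W$, topological transitivity applied to the pair $(W,V_j)$ yields some $n$ with $T^n(W)\cap V_j\neq\emptyset$, equivalently $W\cap T^{-n}(V_j)\neq\emptyset$, so $W\cap G_j\neq\emptyset$. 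Thus each $G_j$ is open and dense, and by the Baire category theorem $HC(T)=\bigcap_j G_j$ is a dense $G_\delta$, in particular nonempty; any element of it is a hypercyclic vector, so $T$ is hypercyclic.

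The hard part---really the only nontrivial ingredient---is the $(ii)\Rightarrow(i)$ direction, and within it the recognition that transitivity is exactly the statement that each $G_j$ is dense, so that the countable intersection describing hypercyclic vectors is a dense $G_\delta$ rather than merely nonempty. I expect the main points to watch are: that the base can be taken to consist of nonempty open sets, matching the hypothesis stated for $U,V$; that completeness (Fr\'echet, hence Baire) is genuinely invoked and not circumvented; and, in $(i)\Rightarrow(ii)$, the no-isolated-points remark needed to pass from the full dense orbit to a dense tail.
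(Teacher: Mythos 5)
The paper does not prove this statement at all: it is quoted as a classical result (Birkhoff's transitivity theorem, attributed to \cite{GDBirkhoff1922} and available in the cited books of Bayart--Matheron and Grosse-Erdmann--Peris), so there is no in-paper argument to compare yours against. Your proof is the standard one and is correct: the implication $(ii)\Rightarrow(i)$ via the identity $HC(T)=\bigcap_{j}\bigcup_{n}T^{-n}(V_{j})$, openness and density of each $\bigcup_{n}T^{-n}(V_{j})$, and the Baire category theorem is exactly the textbook route, and in $(i)\Rightarrow(ii)$ you correctly supply the one point that is easy to overlook, namely that a nontrivial topological vector space has no isolated points so the tail $\{T^{k}x:k>m\}$ of a dense orbit is still dense. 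The only cosmetic remarks are that the degenerate case $X=\{0\}$ is trivially fine on both sides, and that separability plus complete metrizability of a Fr\'echet space is precisely what licenses both the countable base and the Baire argument, as you note.
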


\begin{theorem}[\cite{FBayartEMatheron2009}]\label{duijiaoxianyujiaquanyiweizhuanzhihunhedingliyinyong11}
Let $X$ is a topological vector space,$T$ is a bounded linear operator on $X$.Let
\begin{eqnarray*}
\left\{\begin{array}{l}
\Lambda_{1}(T)\triangleq
span(\bigcup\limits_{|\lambda|=1,n\in\mathbb{N}}\ker(T-\lambda)^{N}\bigcap ran(T-\lambda)^{N});\\
\Lambda^{+}(T)\triangleq
span(\Lambda_{1}(T)\bigcup\bigcup\limits_{|\lambda|>1,n\in\mathbb{N}}\ker(T-\lambda)^{N});\\
\Lambda^{-}(T)\triangleq
span(\Lambda_{1}(T)\bigcup\bigcup\limits_{|\lambda|<1,n\in\mathbb{N}}\ker(T-\lambda)^{N}).
\end{array}\right.
\end{eqnarray*}

If $\Lambda^{+}(T)$ and $\Lambda^{-}(T)$ are both dense in $X$,then $T$ is mixing.
\end{theorem}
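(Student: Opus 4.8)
The plan is to derive topological mixing from a Kitai--Gethner--Shapiro type sufficient condition and then to check that condition on the generalized eigenvectors that generate $\Lambda^{+}(T)$ and $\Lambda^{-}(T)$. First I would record the \emph{mixing criterion}: if there exist dense subspaces $X_{0},Y_{0}\subseteq X$ and maps $S_{n}\colon Y_{0}\to X$ with $T^{n}x\to 0$ for every $x\in X_{0}$, with $S_{n}y\to 0$ for every $y\in Y_{0}$, and with $T^{n}S_{n}y\to y$ for every $y\in Y_{0}$, then $T$ is mixing. Granting this, mixing is immediate: for nonempty open $U,V$ pick $x\in X_{0}\cap U$ and $y\in Y_{0}\cap V$ by density, and put $u_{n}=x+S_{n}y$; then $u_{n}\to x$ gives $u_{n}\in U$ for all large $n$, while $T^{n}u_{n}=T^{n}x+T^{n}S_{n}y\to y$ gives $T^{n}u_{n}\in V$ for all large $n$, so $T^{n}(U)\cap V\neq\emptyset$ eventually. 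The natural candidates are $X_{0}=\Lambda^{-}(T)$ and $Y_{0}=\Lambda^{+}(T)$, both dense by hypothesis.

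Next I would carry out the computation on a single generalized eigenvector. For $x\in\ker(T-\lambda)^{N}$, writing $T=\lambda I+(T-\lambda)$ and expanding gives
\begin{equation*}
T^{n}x=\sum_{j=0}^{N-1}\binom{n}{j}\lambda^{\,n-j}(T-\lambda)^{j}x,
\end{equation*}
the sum stopping at $N-1$ because $(T-\lambda)^{j}x=0$ for $j\ge N$. If $|\lambda|<1$, every term is a polynomial in $n$ times $|\lambda|^{\,n-j}$, so $\|T^{n}x\|\to 0$; this settles the generators of $\Lambda^{-}(T)$ with $|\lambda|<1$. If $|\lambda|>1$, then $\lambda\neq 0$ forces $T$ to restrict to an invertible operator on the chain $\ker(T-\lambda)^{N}$, and defining $S_{n}x=T^{-n}x$ and expanding with $\lambda^{-1}$ in place of $\lambda$ yields $S_{n}x\to 0$ together with the exact relation $T^{n}S_{n}x=x$; this settles the generators of $\Lambda^{+}(T)$ with $|\lambda|>1$. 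Linearity passes these statements to finite linear combinations, and a routine $3\varepsilon$ estimate extends them from generators to the whole spans.

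The crux, and the step I expect to be the genuine obstacle, is the unimodular part $\Lambda_{1}(T)$, which lies in both $\Lambda^{+}(T)$ and $\Lambda^{-}(T)$ and on which the expansion above has $|\lambda|^{\,n-j}=1$, so the forward orbit neither decays to $0$ nor is available as a vanishing back-shift; a pure eigenvector with $|\lambda|=1$ already shows that the naive assignment $X_{0}=\Lambda^{-}(T)$ cannot satisfy $T^{n}x\to 0$ verbatim. This is precisely the reason for intersecting with $\operatorname{ran}(T-\lambda)^{N}$: a generator $z\in\ker(T-\lambda)^{N}\cap\operatorname{ran}(T-\lambda)^{N}$ satisfies both $(T-\lambda)^{N}z=0$ and $z=(T-\lambda)^{N}w$, hence has $(T-\lambda)$-preimages of every order inside the generalized eigenspace, on which $T$ then restricts to an invertible operator with two-sided controlled iterates. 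The plan is to use this two-sided invertibility to treat the neutral directions separately from the contracting and expanding ones, showing that the $\Lambda_{1}(T)$-contributions can be absorbed as vanishing error in the connecting sequence without spoiling either $u_{n}\to x$ or $T^{n}u_{n}\to y$. Verifying that the unimodular generalized eigenvectors admitted by the range condition are compatible with the density hypotheses in this way---so that they never obstruct the eventual inclusion $T^{n}(U)\cap V\neq\emptyset$---is the delicate point of the argument; once it is in place, the mixing criterion of the first step delivers the conclusion.
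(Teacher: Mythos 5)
This theorem is quoted from Bayart--Math\'eron and the paper supplies no proof of it, so your proposal can only be judged on its own terms; on those terms it has a genuine gap, and you have located it yourself but not closed it. The entire content of the statement -- the reason the unimodular generalized eigenvectors are required to lie in $\operatorname{ran}(T-\lambda)^{N}$ -- is the part you defer with ``the plan is to\dots'' and ``once it is in place.'' Worse, the scaffolding you erect in the first step cannot support that missing piece: the Kitai--Gethner--Shapiro criterion you state demands $T^{n}x\to 0$ for \emph{every} $x$ in the dense set $X_{0}$, and since $\Lambda_{1}(T)\subseteq\Lambda^{-}(T)$ contains unimodular eigenvectors (your own counterexample), the hypothesis of that criterion is simply false for $X_{0}=\Lambda^{-}(T)$. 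No amount of ``absorbing errors'' rescues a criterion whose hypotheses fail; you must first replace it by the weaker version in which the approximants are allowed to depend on $n$: for each $x$ in a dense set there exist $x_{n}\to x$ with $T^{n}x_{n}\to 0$, and for each $y$ in a dense set there exist $v_{n}\to 0$ with $T^{n}v_{n}\to y$; then $u_{n}=x_{n}+v_{n}$ witnesses mixing. That restatement is not in your proposal.

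The second missing ingredient is the actual construction on $\Lambda_{1}(T)$, which is a computation, not a soft invertibility remark. If $|\lambda|=1$, $z\in\ker(T-\lambda)^{N}$ and $z=(T-\lambda)^{N}w$ (so $w\in\ker(T-\lambda)^{2N}$), expand $T^{n}w=\sum_{j}\binom{n}{j}\lambda^{n-j}(T-\lambda)^{j}w$ and divide by $\binom{n}{N}\lambda^{n-N}$: the terms with $j<N$ die because $\binom{n}{j}/\binom{n}{N}\to 0$, the $j=N$ term is exactly $z$, and the terms with $j>N$, which equal $\binom{n}{j}\binom{n}{N}^{-1}\lambda^{N-j}(T-\lambda)^{j-N}z$ and \emph{blow up}, must be eliminated by an induction down the Jordan chain (first realize the top vector $(T-\lambda)^{m}z$ with $(T-\lambda)^{m+1}z=0$, then subtract and descend). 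This produces both $v_{n}\to 0$ with $T^{n}v_{n}\to z$ and, after multiplying by $\lambda^{n}$ and rearranging (e.g.\ for $N=1$, $z_{n}=z-\lambda w/n$ satisfies $z_{n}\to z$ and $T^{n}z_{n}=-\lambda^{n+1}w/n\to 0$), the approximants needed on the $\Lambda^{-}$ side. Your appeal to ``preimages of every order'' and ``two-sided controlled iterates'' is not accurate -- the range condition only supplies preimages up to order $N$, and the restricted iterates grow polynomially in both directions -- so the delicate point is not merely unverified but misdiagnosed. Until the weakened criterion and this computation are written out, the proposal proves the easy half (the $|\lambda|\neq 1$ generators) and leaves the theorem's actual content unproved.
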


\begin{center}{\bf 2.From Polar Decomposition to functional calculus}\end{center}

The Polar Decomposition Theorem \cite{JohnBConway2000}P15 on Hilbert space is a useful theorem,
especially for invertible bounded linear operator.
We give some properties of $C^{*}$ algebra generated by normal operator.

Let $\mathbb{H}$ be a separable Hilbert space over $\mathbb{C}$
and let $X$ be a compact subset of $\mathbb{C}$.
Let $\mathcal{C}(X)$ denote the linear space of all continuous functions on the compact space $X$,
let $\mathcal{P}(x)$ denote the set of all polynomials on $X$ and
let $T$ be an invertible bounded linear operator on $\mathbb{H}$.
By the Polar Decomposition Theorem \cite{JohnBConway2000}P15 we get $T=U|T|$,
where $U$ is an unitary operator and $|T|^2=T^{*}T$.
Let $\mathcal{A}(|T|)$ denote the $C^{*}$ algebra generated by the positive operator $|T|$ and $1$.

\begin{lemma}\label{weierstrassnikefenyinli1}
Let $0\notin X$ be a compact subset of $\mathbb{C}$.
If $\mathcal{P}(x)$ is dense in $\mathcal{C}(X)$,
then $\mathcal{P}(\frac{1}{x})$ is also dense in $\mathcal{C}(X)$.
\end{lemma}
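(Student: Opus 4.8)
My plan is to reduce the statement to a single approximation fact and then supply that fact by classical complex approximation theory. The key observation is that $\overline{\mathcal{P}(1/x)}$, the uniform closure of the polynomials in $1/x$, is a closed subalgebra of $\mathcal{C}(X)$ containing the constants; consequently, if I can show that the identity function $x$ itself lies in $\overline{\mathcal{P}(1/x)}$, then this closed algebra contains every power $x^{n}$, hence all of $\mathcal{P}(x)$, so by the hypothesis $\overline{\mathcal{P}(x)}=\mathcal{C}(X)$ it must equal $\mathcal{C}(X)$. Thus the whole lemma collapses to the one assertion $x\in\overline{\mathcal{P}(1/x)}$; equivalently, that the identity is a uniform limit on $X$ of polynomials in $1/x$. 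No appeal to self-adjointness is needed for this reduction, only the hypothesis itself.

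To handle this, I would pass to the image set $Y=\{1/x:x\in X\}$, which is again a compact subset of $\mathbb{C}$ with $0\notin Y$, since the inversion $\iota(z)=1/z$ is a homeomorphism of $\mathbb{C}\setminus\{0\}$. Writing $t=1/x$, the assertion $x\in\overline{\mathcal{P}(1/x)}$ becomes the statement that $t\mapsto 1/t$ is a uniform limit on $Y$ of polynomials in $t$. Because $0\notin Y$, the function $1/t$ is holomorphic on the open neighborhood $\mathbb{C}\setminus\{0\}$ of $Y$, so Runge's theorem will furnish such polynomials provided the complement $\mathbb{C}\setminus Y$ is connected.

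It therefore remains to secure the connectedness of $\mathbb{C}\setminus Y$, and here I would use the density hypothesis twice. First, density of $\mathcal{P}(x)$ in $\mathcal{C}(X)$ forces $\mathbb{C}\setminus X$ to be connected: were there a bounded component $\Omega$, the maximum modulus principle applied to polynomials on $\overline{\Omega}$ (whose boundary lies in $X$) would force the uniform limit of any approximating sequence to extend holomorphically across $\Omega$, which fails for a function such as $\overline{x}$, contradicting density. Second, I would transfer connectedness from $X$ to $Y$ on the Riemann sphere $\widehat{\mathbb{C}}$: since $\mathbb{C}\setminus X$ is connected, so is $\widehat{\mathbb{C}}\setminus X=(\mathbb{C}\setminus X)\cup\{\infty\}$, the homeomorphism $\iota$ carries it onto $\widehat{\mathbb{C}}\setminus Y$, and deleting the single interior point $\infty$ from this connected open subset of the two-dimensional sphere leaves it connected, so $\mathbb{C}\setminus Y$ is connected as required.

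The main obstacle is the complex-analytic core rather than the algebra: everything hinges on approximating $1/t$ by polynomials on $Y$, which genuinely requires connectedness of the complement (if $Y$ encircled the origin the approximation would be impossible, and indeed in that case $\mathcal{P}(x)$ would not have been dense on $X$ to begin with). I expect the delicate points to be the rigorous derivation of ``complement connected'' from the density hypothesis and the careful sphere-topology argument transferring it to $Y$; once these are in place, Runge's theorem together with the elementary closed-algebra reduction finishes the proof. I note finally that in the case most relevant to this paper, where $X=\sigma(|T|)$ is a compact subset of $(0,\infty)$, both density statements are automatic from the real Stone--Weierstrass theorem, since $\mathcal{P}(1/x)$ separates the points of $X$ and contains the constants.
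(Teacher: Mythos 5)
Your proposal is correct in outline, but it takes a genuinely different route from the paper. The paper's proof is a one-step Stone--Weierstrass argument applied directly to $\mathcal{P}(\frac{1}{x})$: it checks that this algebra contains $1$, separates the points of $X$ (since $x\mapsto\frac{1}{x}$ is injective), and is self-adjoint, then cites Stone--Weierstrass. Note that its condition $(3)$ (self-adjointness) is only true when $X\subseteq\mathbb{R}$, which is the case actually used later ($X=\sigma(|T|)\subseteq(0,\infty)$); for genuinely complex $X$ the paper's argument breaks down, and moreover it never uses the hypothesis that $\mathcal{P}(x)$ is dense. Your argument is the opposite in both respects: the reduction to $x\in\overline{\mathcal{P}(1/x)}$, the passage to $Y=\{1/x:x\in X\}$, and the appeal to Runge's theorem handle arbitrary compact $X\subseteq\mathbb{C}\setminus\{0\}$, and the density hypothesis is used essentially (to force $\mathbb{C}\setminus X$, hence $\mathbb{C}\setminus Y$, to be connected). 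So your proof is strictly more general, at the cost of invoking complex approximation theory where the paper needs only Stone--Weierstrass. The one soft spot is your derivation of connectedness of $\mathbb{C}\setminus X$: showing that the holomorphic extension across a bounded component $\Omega$ ``fails for $\overline{x}$'' requires a further argument (e.g.\ that $zh(z)$ would be holomorphic with real boundary values, hence constant, forcing $\partial\Omega$ onto a circle about $0$ and then $0\in\Omega$, a contradiction); the standard and cleaner choice is to approximate $f(x)=\frac{1}{x-a}$ for $a\in\Omega$ and evaluate $(z-a)p_n(z)-1$ at $a$ via the maximum modulus principle. With that repair, and noting (as you do) that for $X\subseteq(0,\infty)$ the whole lemma is immediate from the real Stone--Weierstrass theorem, your proof is complete.
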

\begin{proof}
By the property of polynomials we know that $\mathcal{P}(\frac{1}{x})$ is a algebraic closed subalgebra of $\mathcal{C}(X)$ and we get:

$(1)$ $1\in\mathcal{P}(\frac{1}{x})$;

$(2)$ $\mathcal{P}(\frac{1}{x})$ separate the points of $X$;

$(3)$ If $p(\frac{1}{x})\in\mathcal{P}(x)$,then $\bar{p}(\frac{1}{x})\in\mathcal{P}(x)$.

By the Stone-Weierstrass Theorem \cite{JohnBConway1990}P145 we get the conclusion.
\end{proof}

\begin{lemma}\label{weierstrasspingfangkefenyinli2}
Let $X\subseteq\mathbb{R_+}$.
If $\mathcal{P}(|x|)$ is dense in $\mathcal{C}(X)$,
then $\mathcal{P}(|x|^2)$ is also dense in $\mathcal{C}(X)$.
\end{lemma}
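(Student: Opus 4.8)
The plan is to verify the hypotheses of the Stone--Weierstrass theorem \cite{JohnBConway1990}P145 for the subalgebra $\mathcal{P}(|x|^2)$ of $\mathcal{C}(X)$, in exactly the spirit of the proof of Lemma~\ref{weierstrassnikefenyinli1}. Since $X\subseteq\mathbb{R}_+$, on $X$ we have $|x|=x$, so the members of $\mathcal{P}(|x|^2)$ are genuine polynomials in the single real-valued function $x\mapsto x^2$, and the whole argument reduces to checking that this algebra is unital, self-adjoint, and point-separating.

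First I would dispatch the two routine algebraic conditions. The constant function $1$ lies in $\mathcal{P}(|x|^2)$ as the degree-zero polynomial, giving condition $(1)$. For self-adjointness, note that $x^2$ is real-valued on $X$, so $\overline{p(x^2)}=\bar p(x^2)\in\mathcal{P}(|x|^2)$ for every $p\in\mathcal{P}$, giving the analogue of condition $(3)$ in Lemma~\ref{weierstrassnikefenyinli1}. Neither of these uses anything beyond the fact that $x^2$ is a real continuous function on the compact set $X$.

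The substantive step --- and the only place the hypothesis $X\subseteq\mathbb{R}_+$ genuinely enters --- is showing that $\mathcal{P}(|x|^2)$ separates the points of $X$. Given $s\neq t$ in $X$ with $s,t\geq 0$, the squaring map $u\mapsto u^2$ is strictly increasing, hence injective, on $[0,\infty)$, so $s^2\neq t^2$; the first-degree polynomial $p(u)=u$ then supplies a member of $\mathcal{P}(|x|^2)$, namely $x\mapsto x^2$, taking distinct values at $s$ and $t$. This is precisely where positivity of $X$ is essential: on a set meeting both half-lines, squaring would fail to be injective and point separation could collapse.

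With all three conditions in hand, Stone--Weierstrass yields that $\mathcal{P}(|x|^2)$ is dense in $\mathcal{C}(X)$, which is the claim. I do not expect a real obstacle here; the only subtlety is the role of the hypothesis that $\mathcal{P}(|x|)$ is dense, which serves only to pin down that $X$ is a compact set on which polynomials in $|x|$ already separate points --- and on $\mathbb{R}_+$ this is automatic. Consequently the entire weight of the argument rests on the injectivity of squaring on the non-negative reals, and I would present the proof so that this is the one point emphasized.
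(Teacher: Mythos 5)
Your proposal is correct and follows essentially the same route as the paper: the paper's (very terse) proof consists precisely of the observation that $x\neq y\Longleftrightarrow x^{2}\neq y^{2}$ on $\mathbb{R}_{+}$, i.e.\ that squaring is injective there so that $\mathcal{P}(|x|^{2})$ separates points, followed by an appeal to the Stone--Weierstrass argument of Lemma~\ref{weierstrassnikefenyinli1}. You have simply written out the unitality and self-adjointness checks that the paper leaves implicit, and correctly identified the injectivity of squaring as the one substantive point.
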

\begin{proof}
For $X\subseteq\mathbb{R_+}$,
$x\neq y$
$\Longleftrightarrow x^2\neq y^2$.
By Lemma $\ref{weierstrassnikefenyinli1}$ we get the conclusion.
\end{proof}

By the GNS construction \cite{JohnBConway1990}P250 for the $C^{*}$ algebra $\mathcal{A}(|T|)$,
we get the following decomposition.

\begin{lemma}\label{gnsfenjiedingliyingyongyinli3}
Let $T$ be an invertible bounded linear operator on the separable Hilbert space $\mathbb{H}$ over $\mathbb{C}$,
$\mathcal{A}(|T|)$ is the complex $C^{*}$ algebra generated by $|T|$ and $1$.
There is a sequence of nonzero $\mathcal{A}(|T|)$-invariant subspace.
$\mathbb{H}_1,\mathbb{H}_2,\cdots$ such that:

$(1)$ $\mathbb{H}=\mathbb{H}_1\bigoplus\mathbb{H}_2\bigoplus\cdots$;

$(2)$ For every $\mathbb{H}_i$,
there is a $\mathcal{A}(|T|)$-cyclic vector $\xi^i$ such that $\mathbb{H}_i=\overline{\mathcal{A}(|T|)\xi^i}$ and
$|T|\mathbb{H}_i=\mathbb{H}_i=|T|^{-1}\mathbb{H}_i$.
\end{lemma}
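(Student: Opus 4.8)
The plan is to realize this as the standard cyclic subspace decomposition of the commutative unital $C^{*}$ algebra $\mathcal{A}(|T|)$ acting on $\mathbb{H}$, and then to upgrade the resulting invariance to the exact identities $|T|\mathbb{H}_i=\mathbb{H}_i=|T|^{-1}\mathbb{H}_i$ using invertibility. First I would record that $\mathcal{A}(|T|)$ is a commutative unital $C^{*}$ algebra, being generated by the single positive operator $|T|$ together with $1$. Since $T$ is invertible, $T^{*}T=|T|^{2}$ is invertible and bounded below, so $|T|$ is invertible with $0\notin\sigma(|T|)$. The key preliminary observation is that $|T|^{-1}$ already lies in $\mathcal{A}(|T|)$: because $0\notin\sigma(|T|)$, the function $x\mapsto 1/x$ is continuous on the compact set $\sigma(|T|)\subseteq\mathbb{R}_{+}$, and Lemma $\ref{weierstrassnikefenyinli1}$ (applied with $X=\sigma(|T|)$) shows it is a uniform limit of polynomials in $|T|$; hence $|T|^{-1}\in\mathcal{A}(|T|)$ by the continuous functional calculus.

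Next I would produce the orthogonal family of cyclic subspaces. Starting from any nonzero $\xi^{1}\in\mathbb{H}$, set $\mathbb{H}_1=\overline{\mathcal{A}(|T|)\xi^{1}}$; this is a nonzero closed $\mathcal{A}(|T|)$-invariant subspace on which $\xi^{1}$ is cyclic. Because $\mathcal{A}(|T|)$ is closed under the adjoint operation, the orthogonal complement of any $\mathcal{A}(|T|)$-invariant subspace is again $\mathcal{A}(|T|)$-invariant; I would use this to pass to $\mathbb{H}_1^{\perp}$, choose a nonzero vector there, form its cyclic subspace, and repeat. Formally this is a Zorn's-lemma argument selecting a maximal family of mutually orthogonal nonzero cyclic subspaces $\{\mathbb{H}_i\}$; maximality forces their closed span to be all of $\mathbb{H}$, for otherwise the invariant orthogonal complement would yield one more cyclic subspace, contradicting maximality. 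This gives the decomposition $(1)$, while the cyclic vectors $\xi^{i}$ furnish $(2)$. Separability of $\mathbb{H}$ guarantees the family is at most countable, since an uncountable orthogonal family of nonzero subspaces cannot exist in a separable space.

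Finally I would verify the mapping identities. Since $\mathbb{H}_i$ is $\mathcal{A}(|T|)$-invariant and both $|T|$ and $|T|^{-1}$ belong to $\mathcal{A}(|T|)$ by the first step, we have $|T|\mathbb{H}_i\subseteq\mathbb{H}_i$ and $|T|^{-1}\mathbb{H}_i\subseteq\mathbb{H}_i$. Applying $|T|$ to the latter inclusion gives $\mathbb{H}_i=|T|\,|T|^{-1}\mathbb{H}_i\subseteq|T|\mathbb{H}_i\subseteq\mathbb{H}_i$, whence $|T|\mathbb{H}_i=\mathbb{H}_i$, and symmetrically $|T|^{-1}\mathbb{H}_i=\mathbb{H}_i$.

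I expect the main obstacle to be the bookkeeping in the cyclic decomposition rather than any deep difficulty: one must be careful that the orthogonal-complement-is-invariant step genuinely uses the $*$-closedness of $\mathcal{A}(|T|)$, and that maximality together with separability jointly deliver a countable decomposition exhausting $\mathbb{H}$. Everything else reduces to the functional-calculus fact that $|T|^{-1}\in\mathcal{A}(|T|)$, which is exactly the point where Lemma $\ref{weierstrassnikefenyinli1}$ is needed.
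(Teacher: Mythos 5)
Your proposal is correct and follows essentially the same route as the paper: the paper obtains the cyclic decomposition $(1)$ by citing the standard result in Arveson (p.~54) — which is exactly the Zorn's-lemma/orthogonal-complement argument you spell out — and then uses Lemma $\ref{weierstrassnikefenyinli1}$ to conclude $|T|^{-1}\mathbb{H}_i\subseteq\mathbb{H}_i$, just as you do via $|T|^{-1}\in\mathcal{A}(|T|)$. Your write-up merely makes explicit the details the paper delegates to its references.
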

\begin{proof}
By \cite{WilliamArveson2002}P54 we get $(1)$,and $|T|\mathbb{H}_i\subseteq\mathbb{H}_i$,that is $\mathbb{H}_i\subseteq|T|^{-1}\mathbb{H}_i$;
by Lemma $\ref{weierstrassnikefenyinli1}$ we get $|T|^{-1}\mathbb{H}_i\subseteq\mathbb{H}_i$.
Hence we get $|T|\mathbb{H}_i=\mathbb{H}_i=|T|^{-1}\mathbb{H}_i$.
\end{proof}

For $\forall n\in\mathbb{N}$,$T^n$ is invertible when $T$ is invertible.
By the Polar Decomposition Theorem \cite{JohnBConway2000}P15 $T^n=U_n|T^n|$,
where $U_n$ is unitary operator and $|T^n|^2=T^{*n}T^{n}$,
we get the following conclusion.

\begin{lemma}\label{ncignsfenjiedingliyingyongyinli4}
Let $T$ be an invertible bounded linear operator on the separable Hilbert space $\mathbb{H}$ over $\mathbb{C}$,
let $\mathcal{A}(|T^k|)$ be the complex $C^{*}$ algebra generated by $|T^k|$ and $1$ and
let $\mathbb{H}_i^{|T^k|}=\overline{\mathcal{A}(|T^k|)\xi_k^{i}}$ be a sequence of non-zero $\mathcal{A}(|T^k|)$-invariant subspace,
there is a decomposition $\mathbb{H}=\bigoplus_i\mathbb{H}_i^{|T^k|}$,
$\xi_k^{i}\in\mathbb{H},i,k\in\mathbb{N}$.
Given a proper permutation of $\mathbb{H}_i^{|T^k|}$ and $\mathbb{H}_j^{|T^{(k+1)}|}$,
we get $T^{*}\mathbb{H}_i^{|T^k|}=\mathbb{H}_i^{|T^{(k+1)}|}$ and
$T^{-1}\mathbb{H}_i^{|T^k|}=\mathbb{H}_i^{|T^{(k+1)}|}$.
\end{lemma}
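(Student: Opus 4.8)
The plan is to make the polar decomposition of the single operator $|T^k|T$ the engine of the proof, rather than manipulating the squares $|T^k|^2=T^{*k}T^k$ directly. First I would record the basic algebraic identity $|T^{k+1}|^2=T^{*(k+1)}T^{k+1}=T^*\big(T^{*k}T^k\big)T=T^*|T^k|^2T$, which exhibits $|T^{k+1}|^2$ as $\big(|T^k|T\big)^*\big(|T^k|T\big)$; hence $|T^{k+1}|=\big||T^k|T\big|$. Since $T$ is invertible the operator $|T^k|T$ is invertible, so its polar decomposition $|T^k|T=W_k|T^{k+1}|$ has a genuinely unitary factor $W_k$. Solving $T=|T^k|^{-1}W_k|T^{k+1}|$ and then taking adjoints and inverses gives the two factorizations
\begin{equation*}
T^*=|T^{k+1}|\,W_k^*\,|T^k|^{-1},\qquad T^{-1}=|T^{k+1}|^{-1}\,W_k^*\,|T^k|,
\end{equation*}
each of the shape (function of $|T^{k+1}|$)$\cdot\,W_k^*\,\cdot$(function of $|T^k|$). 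This common shape is exactly what will let me treat $T^*$ and $T^{-1}$ on the same footing.

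Next I would feed these into the cyclic subspaces. By Lemma \ref{weierstrassnikefenyinli1} (applied to $|T^k|$) the inverse $|T^k|^{-1}$ lies in $\mathcal{A}(|T^k|)$, so by Lemma \ref{gnsfenjiedingliyingyongyinli3} both $|T^k|$ and $|T^k|^{-1}$ preserve each summand, $|T^k|^{\pm1}\mathbb{H}_i^{|T^k|}=\mathbb{H}_i^{|T^k|}$. Applying the factorizations therefore collapses the outer factor and yields $T^*\mathbb{H}_i^{|T^k|}=|T^{k+1}|\,W_k^*\mathbb{H}_i^{|T^k|}$ and $T^{-1}\mathbb{H}_i^{|T^k|}=|T^{k+1}|^{-1}\,W_k^*\mathbb{H}_i^{|T^k|}$. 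The point I want to extract is that, once $W_k^*\mathbb{H}_i^{|T^k|}$ is known to be invariant under $\mathcal{A}(|T^{k+1}|)$, the invertible factors $|T^{k+1}|^{\pm1}\in\mathcal{A}(|T^{k+1}|)$ preserve it (again by Lemmas \ref{weierstrassnikefenyinli1} and \ref{gnsfenjiedingliyingyongyinli3}), so that both $T^*\mathbb{H}_i^{|T^k|}$ and $T^{-1}\mathbb{H}_i^{|T^k|}$ reduce to the single subspace $W_k^*\mathbb{H}_i^{|T^k|}$. This is precisely why the statement can assert that $T^*$ and $T^{-1}$ send $\mathbb{H}_i^{|T^k|}$ to the same space, and it tells me to define $\mathbb{H}_i^{|T^{k+1}|}:=W_k^*\mathbb{H}_i^{|T^k|}$.

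It then remains to check that $\{W_k^*\mathbb{H}_i^{|T^k|}\}_i$ is a legitimate $\mathcal{A}(|T^{k+1}|)$-cyclic decomposition of $\mathbb{H}$. Orthogonality and completeness are immediate because $W_k^*$ is unitary: it carries the orthogonal decomposition $\mathbb{H}=\bigoplus_i\mathbb{H}_i^{|T^k|}$ of Lemma \ref{gnsfenjiedingliyingyongyinli3} onto another orthogonal decomposition, and it carries the cyclic vector $\xi_k^i$ to the candidate cyclic vector $W_k^*\xi_k^i$. The reindexing of the summands needed to match the $i$-th space on level $k$ with the correct space on level $k+1$ is the ``proper permutation'' of the statement, to be read off from the spectral multiplicity data.

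The hard part, and the step I expect to be the real obstacle, is showing that $W_k^*$ actually conjugates the two abelian algebras, that is $W_k^*\mathcal{A}(|T^k|)W_k=\mathcal{A}(|T^{k+1}|)$, equivalently that each $W_k^*\mathbb{H}_i^{|T^k|}$ is invariant under $|T^{k+1}|$. Unwinding $W_k=|T^k|T\,|T^{k+1}|^{-1}$ shows that $W_k^*|T^k|W_k$ is generally not a function of $|T^{k+1}|$ by any formal manipulation, so the invariance cannot be obtained from the algebra alone; it must come from comparing the scalar spectral measures and multiplicity functions of $|T^k|$ and $|T^{k+1}|$ through Lemma \ref{weierstrasspingfangkefenyinli2} together with the identity $|T^{k+1}|^2=T^*|T^k|^2T$, and then choosing the permutation so that the measure classes line up. Making this measure-class comparison precise, and isolating the conditions on $T$ under which it holds, is where the substance of the argument lies and is the place I would concentrate the technical work.
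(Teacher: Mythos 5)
Your setup is correct and is actually cleaner than what the paper does: the identity $|T^{k+1}|^{2}=(|T^{k}|T)^{*}(|T^{k}|T)$ gives $|T^{k+1}|=\bigl|\,|T^{k}|T\,\bigr|$, the polar decomposition $|T^{k}|T=W_{k}|T^{k+1}|$ has a unitary $W_{k}$, and the factorizations $T^{*}=|T^{k+1}|W_{k}^{*}|T^{k}|^{-1}$ and $T^{-1}=|T^{k+1}|^{-1}W_{k}^{*}|T^{k}|$, combined with Lemma \ref{gnsfenjiedingliyingyongyinli3}, do yield $T^{*}\mathbb{H}_{i}^{|T^{k}|}=|T^{k+1}|\,W_{k}^{*}\mathbb{H}_{i}^{|T^{k}|}$ and $T^{-1}\mathbb{H}_{i}^{|T^{k}|}=|T^{k+1}|^{-1}W_{k}^{*}\mathbb{H}_{i}^{|T^{k}|}$. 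But the entire content of the lemma is then concentrated in the one claim you defer to future ``technical work'': that $W_{k}^{*}\mathbb{H}_{i}^{|T^{k}|}$ is $\mathcal{A}(|T^{k+1}|)$-invariant, equivalently that $W_{k}^{*}$ conjugates $\mathcal{A}(|T^{k}|)$ onto $\mathcal{A}(|T^{k+1}|)$. You do not prove this, you observe yourself that no formal manipulation gives it, and you even propose to ``isolate the conditions on $T$ under which it holds'' --- which is not a proof of the lemma as stated. The obstruction is real: $W_{k}|T^{k+1}|W_{k}^{*}=(|T^{k}|TT^{*}|T^{k}|)^{1/2}$ drags in $TT^{*}$, which does not belong to $\mathcal{A}(|T^{k}|)$ for non-normal $T$, so there is no reason for $\mathbb{H}_{i}^{|T^{k}|}$ to be invariant under it; and note that the asserted conclusion $T^{*}\mathbb{H}_{i}^{|T^{k}|}=T^{-1}\mathbb{H}_{i}^{|T^{k}|}$ already forces $TT^{*}\mathbb{H}_{i}^{|T^{k}|}=\mathbb{H}_{i}^{|T^{k}|}$, which nothing in the hypotheses supplies. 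So the proposal has a genuine, acknowledged gap at its central step.

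For comparison, the paper takes a different route: it uses the single intertwining relation $|T^{k+1}|^{2}T^{-1}=T^{*}|T^{k}|^{2}$ and extends it to the claim $p(|T^{k+1}|^{2})T^{-1}\xi_{k}^{i}=T^{*}p(|T^{k}|^{2})\xi_{k}^{i}$ for \emph{every} polynomial $p$, from which the inclusions between cyclic subspaces follow. In your language that polynomial identity is exactly the conjugation statement $W_{k}^{*}\mathcal{A}(|T^{k}|)W_{k}=\mathcal{A}(|T^{k+1}|)$: it holds for $p(x)=x$ but for $p(x)=x^{2}$ it already requires $|T^{k}|^{2}TT^{*}|T^{k}|^{2}=|T^{k}|^{4}$, i.e.\ essentially $TT^{*}=1$. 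So the step you correctly identify as the obstruction is precisely the step the paper passes over without justification. Your proposal therefore does not close the gap; it relocates and names it. To turn this into a proof you would have to actually carry out the spectral-measure/multiplicity comparison you sketch in your last paragraph, or add a hypothesis (e.g.\ that $TT^{*}$ commutes with $\mathcal{A}(|T^{k}|)$) under which the conjugation identity holds.
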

\begin{proof}
By Lemma $\ref{weierstrasspingfangkefenyinli2}$,it is enough to prove the conclusion on $\overline{\mathcal{P}(|T^k|^2)\xi_k^{i}}$.
For any given $\xi_k^{i}\in\mathbb{H}=\bigoplus_j\mathbb{H}_j^{|T^{(k+1)}|}$,
there is a unique $j\in\mathbb{N}$ such that $\xi_k^{i}\in\mathbb{H}_j^{|T^{(k+1)}|}$.

$(1)$ Because $T$ is invertible,
for any given $\xi_{k}^{i}$,
there is an unique $\eta_i\in\mathbb{H}_{s}^{|T^{(k+1)}|}$ such that $\eta_i=T^{-1}\xi_{k}^{i}$.
For $\forall p\in\mathcal{P}(|x|^2)$,
we get $p(|T^{(k+1)}|^2)\eta_i= T^{*}p(|T^{k}|^2)\xi_{k}^{i}$.
Hence we get

$\mathbb{H}_{s}^{|T^{(k+1)}|}=\overline{\mathcal{P}(|T^{(k+1)}|^2)\xi_{k+1}^{s}}\supseteq T^{*}\overline{\mathcal{P}(|T^{k}|^2)\xi_{k}^{i}}=T^{*}\mathbb{H}_{i}^{|T^{k}|}$.

$(2)$ Similarly,for any given $\xi_{k+1}^{s}$,
there is an unique $\eta_r\in\mathbb{H}_{r}^{|T^{k}|}$ such that $\eta_r=T\xi_{k+1}^{s}$.
For $\forall p\in\mathcal{P}(|x|^2)$,we get

$p(|T^{k}|^2)\eta_r=p(|T^{k}|^2)T\xi_{k+1}^{s}=T^{*-1}p(|T^{(k+1)}|^2)\xi_{k+1}^{s}$.

Hence we get

$\mathbb{H}_{r}^{|T^{k}|}=\overline{\mathcal{P}(|T^{k}|^2)\xi_{k}^{r}}\supseteq T^{*-1}\overline{\mathcal{P}(|T^{(k+1)}|^2)\xi_{k+1}^{s}}=T^{*-1}\mathbb{H}_{s}^{|T^{(k+1)}|}$.

Let $i=r$,by $(1)(2)$ we get $T^{*-1}\mathbb{H}_{s}^{|T^{(k+1)}|}\subseteq \mathbb{H}_{i}^{|T^{k}|}\subseteq T^{*-1}\mathbb{H}_{j}^{|T^{(k+1)}|}$.

Fixed the order of $\mathbb{H}_{i}^{|T^{k}|}$,
by a proper permutation of $\mathbb{H}_{j}^{|T^{(k+1)}|}$ we get $T^{*}\mathbb{H}_{i}^{|T^{k}|}=\mathbb{H}_{i}^{|T^{(k+1)}|}$.

By Lemma $\ref{weierstrassnikefenyinli1}$ and
$T$ is invertible,we get

$(3)$ For any given $\xi_{k}^{i}$,
there is an unique $\eta_i\in\mathbb{H}_{i}^{|T^{(k+1)}|}$ such that $\eta_i=T^{*}\xi_{k}^{i}$.
For $\forall p\in\mathcal{P}(|x|^{-2})$,
we get $p(|T^{(k+1)}|^{-2})\eta_i= T^{-1}p(|T^{k}|^{-2})\xi_{k}^{i}$.
Hence we get

$\mathbb{H}_{i}^{|T^{(k+1)}|}=\overline{\mathcal{P}(|T^{(k+1)}|^{-2})\xi_{k+1}^{i}}\supseteq T^{-1}\overline{\mathcal{P}(|T^{k}|^{-2})\xi_{k}^{i}}=T^{-1}\mathbb{H}_{i}^{|T^{k}|}$.

$(4)$ For any given $\xi_{k+1}^{i}$,there is an unique $\eta_i\in\mathbb{H}_{i}^{|T^{k}|}$ such that $\eta_i=T^{*-1}\xi_{k+1}^{i}$.
For $\forall p\in\mathcal{P}(|x|^{-2})$,we get

$T^{-1}p(|T^{k}|^{-2})\eta_i= T^{-1}p(|T^{k}|^{-2})T^{*-1}\xi_{k+1}^{i}=p(|T^{(k+1)}|^{-2})\xi_{k+1}^{i}$.

Hence we get

$T^{-1}\mathbb{H}_{i}^{|T^{k}|}=T^{-1}\overline{\mathcal{P}(|T^{k}|^{-2})\xi_{k}^{i}}\supseteq \overline{\mathcal{P}(|T^{(k+1)}|^{-2})\xi_{k+1}^{i}}=\mathbb{H}_{i}^{|T^{(k+1)}|}$.

By $(3)(4)$ we get $T^{-1}\mathbb{H}_{i}^{|T^{k}|}\subseteq \mathbb{H}_{i}^{|T^{(k+1)}|}\subseteq T^{-1}\mathbb{H}_{i}^{|T^{k}|}$.

that is,$T^{-1}\mathbb{H}_{i}^{|T^{k}|}=\mathbb{H}_{i}^{|T^{(k+1)}|}$.
\end{proof}

Let $\xi\in\mathbb{H}$ is a $\mathcal{A}(|T|)$-cyclic vector such that $\mathcal{A}(|T|)\xi$ is dense in $\mathbb{H}$.
Because of $\sigma{|T|}\neq\emptyset$,
on $\mathcal{C}(\sigma(|T|))$ define the non-zero linear functional $\rho_{|T|}$:$\rho_{|T|}(f)=<f(|T|)\xi,\xi>,\forall f\in\mathcal{C}(\sigma(|T|))$.
Then $\rho_{|T|}$ is a positive linear functional,
by \cite{WilliamArveson2002}P54 and the Riesz-Markov Theorem,
on $\mathcal{C}(\sigma(|T|))$ we get that there exists an unique finite positive Borel measure $\mu_{|T|}$ such that
\begin{eqnarray*}
\left.\begin{array}{lr}
\int\limits_{\sigma(|T|)}f(z)\,d\mu_{|T|}(z)=<f(|T|)\xi,\xi>, & \forall f\in\mathcal{C}(\sigma(|T|)).
\end{array}\right.
\end{eqnarray*}

\begin{theorem}\label{hanshuyansuan5}
Let $T$ be an invertible bounded linear operator on the separable Hilbert space $\mathbb{H}$ over $\mathbb{C}$,
there is $\xi\in\mathbb{H}$ such that $\overline{\mathcal{A}(|T|)\xi}=\mathbb{H}$.
For any given $n\in\mathbb{N}$,
let $\mathcal{A}(|T^n|)$ be the complex $C^{*}$ algebra generated by $|T^n|$ and $1$ and
let $\xi_n$ be a $\mathcal{A}(|T^n|)$-cyclic vector such that $\overline{\mathcal{A}(|T^n|)\xi_n}=\mathbb{H}$.
Then:

$(1)$ For any given $\xi_n$,there is an unique positive linear functional
\begin{eqnarray*}
\left.\begin{array}{lr}
\int\limits_{\sigma(|T^n|)}{f(z)\,d\mu_{|T^n|}(z)}=<f(|T^n|)\xi_n,\xi_n>, & \forall f\in\mathcal{L}^{2}(\sigma(|T^n|)).
\end{array}\right.
\end{eqnarray*}

$(2)$ For any given $\xi_n$,there is an unique finite positive complete Borel measure $\mu_{|T^n|}$ such that
$\mathcal{L}^{2}(\sigma(|T^n|))$ is isomorphic to $\mathbb{H}$.
\end{theorem}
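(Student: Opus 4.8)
The plan is to treat this as the cyclic-vector form of the spectral theorem applied to the positive self-adjoint operator $|T^n|$. Since $T$ is invertible, so is $T^n$, whence $|T^n| = (T^{*n}T^{n})^{1/2}$ is positive, self-adjoint and invertible; consequently $\sigma(|T^n|)$ is a compact subset of $(0,\infty)$ and $0 \notin \sigma(|T^n|)$. By the continuous functional calculus, $\mathcal{A}(|T^n|)$ is $*$-isometrically isomorphic to $\mathcal{C}(\sigma(|T^n|))$ through $f \mapsto f(|T^n|)$, and I would use this identification to pass freely between functions on the spectrum and elements of the algebra.

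First I would establish part $(1)$ at the level of continuous functions. Setting $\rho(f) = \langle f(|T^n|)\xi_n, \xi_n\rangle$ on $\mathcal{C}(\sigma(|T^n|))$ gives a linear functional which is positive: for $f \geq 0$ write $f = |h|^2$ with $h = \sqrt{f} \in \mathcal{C}(\sigma(|T^n|))$, so that $f(|T^n|) = h(|T^n|)^{*}h(|T^n|)$ and hence $\rho(f) = \|h(|T^n|)\xi_n\|^2 \geq 0$. Exactly as in the GNS/Riesz-Markov step invoked before the theorem, the Riesz-Markov theorem then yields a unique finite positive regular Borel measure $\mu_{|T^n|}$ on $\sigma(|T^n|)$ representing $\rho$, which is the displayed identity; the uniqueness assertion in $(1)$ is precisely the uniqueness clause of Riesz-Markov.

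For part $(2)$ I would introduce the map $W \colon \mathcal{C}(\sigma(|T^n|)) \to \mathbb{H}$, $W(f) = f(|T^n|)\xi_n$. A direct computation through $\rho$ shows $\|W f\|^2 = \langle |f|^2(|T^n|)\xi_n, \xi_n\rangle = \int |f|^2 \, d\mu_{|T^n|} = \|f\|_{\mathcal{L}^2(\mu_{|T^n|})}^2$, so $W$ is an isometry from $\mathcal{C}(\sigma(|T^n|))$, normed by $\|\cdot\|_{\mathcal{L}^2}$, into $\mathbb{H}$. Because $\mu_{|T^n|}$ is a finite Borel measure on the compact metric space $\sigma(|T^n|)$, continuous functions are dense in $\mathcal{L}^2(\mu_{|T^n|})$, so $W$ extends uniquely to an isometry $\widetilde{W} \colon \mathcal{L}^2(\sigma(|T^n|)) \to \mathbb{H}$. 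Its range is closed, being the isometric image of a complete space, and it contains $\mathcal{A}(|T^n|)\xi_n = \{f(|T^n|)\xi_n : f \in \mathcal{C}(\sigma(|T^n|))\}$, which is dense by the cyclicity of $\xi_n$; therefore $\widetilde{W}$ is onto and is the sought unitary isomorphism $\mathcal{L}^2(\sigma(|T^n|)) \cong \mathbb{H}$. Completeness of $\mu_{|T^n|}$ is then obtained by replacing it with its completion, which leaves $\mathcal{L}^2$ unchanged.

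I expect the one genuine subtlety to be the meaning of $f(|T^n|)$ in $(1)$ when $f$ is only in $\mathcal{L}^2$ and thus possibly unbounded. The cleanest resolution is to read $f(|T^n|)\xi_n$ as $\widetilde{W}(f)$, that is, to define the $\mathcal{L}^2$-level functional calculus through the isometry built in $(2)$, so that the identity in $(1)$ holds by continuous extension from its continuous-function version rather than by a separate construction; the integral on the right is finite because $\mu_{|T^n|}$ is finite and hence $\mathcal{L}^2 \subseteq \mathcal{L}^1$. Apart from this bookkeeping, the whole argument is the standard cyclic spectral-theorem machinery, and no step beyond Riesz-Markov and density of $\mathcal{C}$ in $\mathcal{L}^2$ is needed.
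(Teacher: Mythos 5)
Your proposal is correct and follows essentially the same route as the paper's own proof: define the positive functional $\rho(f)=\langle f(|T^n|)\xi_n,\xi_n\rangle$ on $\mathcal{C}(\sigma(|T^n|))$, apply Riesz--Markov to get the unique finite positive Borel measure, and extend the isometry $f\mapsto f(|T^n|)\xi_n$ from the dense subspace $\mathcal{C}(\sigma(|T^n|))$ to a unitary $\mathcal{L}^2(\sigma(|T^n|))\to\mathbb{H}$ using cyclicity of $\xi_n$. Your explicit remark on how to interpret $f(|T^n|)\xi_n$ for $f$ merely in $\mathcal{L}^2$ (namely as $\widetilde{W}f$) is a point the paper leaves implicit, but it does not change the argument.
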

\begin{proof}
Because $T$ is invertible,
by Lemma $\ref{ncignsfenjiedingliyingyongyinli4}$ we get that if there is a $\mathcal{A}(|T|)$-cyclic vector $\xi$,
then there is a $\mathcal{A}(|T^n|)$-cyclic vector $\xi_n$.

$(1)$:For any given $\xi_n$, define the linear functional,
$\rho_{|T^n|}(f)=<f(|T^n|)\xi_n,\xi_n>$,
by \cite{WilliamArveson2002}P54 and the Riesz-Markov Theorem we get that on $\mathcal{C}(\sigma(|T^n|))$ there is an unique finite positive Borel measure $\mu_{|T^n|}$ such that
\begin{eqnarray*}
\left.\begin{array}{lr}
\int\limits_{\sigma(|T^n|)}{f(z)\,d\mu_{|T^n|}(z)}=<f(|T^n|)\xi_n,\xi_n>, & \forall f\in\mathcal{C}(\sigma(|T^n|)).
\end{array}\right.
\end{eqnarray*}

Moreover we can complete the Borel measure $\mu_{|T^n|}$ on $\sigma(|T^n|)$,
also using $\mu_{|T^n|}$ to denote the complete Borel measure,
By \cite{PaulRHalmos1974} we know that the complete Borel measure is uniquely.

For $\forall f\in\mathcal{L}^{2}(\sigma(|T^n|))$,because of
\begin{eqnarray*}
\left.\begin{array}{l}
\rho_{|T^n|}(|f|^2)=\rho_{|T^n|}(\bar{f}f)=<f(|T^n|)^{*}f(|T^n|)\xi_n,\xi_n>=\|f(|T^n|)\xi_n\|\geq0.
\end{array}\right.
\end{eqnarray*}

we get that $\rho_{|T^n|}$ is a positive linear functional,hence $(1)$ is right.

$(2)$ we know that $\mathcal{C}(\sigma(|T^n|))$ is a subspace of $\mathcal{L}^{2}(\sigma(|T^n|))$ such that  $\mathcal{C}(\sigma(|T^n|))$ is dense in $\mathcal{L}^{2}(\sigma(|T^n|))$.

For any $f,g\in\mathcal{C}(\sigma(|T^n|))$ we get
\begin{eqnarray*}
\left.\begin{array}{l}
<f(|T^n|)\xi_n,g(|T^n|)\xi_n>_{\mathbb{H}}
=<g(|T^n|)^{*}f(|T^n|)\xi_n,\xi_n>\\
=\rho_{|T^n|}(\bar{g}f)
=\int\limits_{\sigma(|T^n|)}{f(z)\bar{g}(z)\,d\mu_{|T^n|}(z)}
=<f,g>_{\mathcal{L}^{2}(\sigma(|T^n|))}.
\end{array}\right.
\end{eqnarray*}

Therefor $U_0:\mathcal{C}(\sigma(|T^n|))\to\mathbb{H},f(z)\to f(|T^n|)\xi_n$ is a surjection isometry from
$\mathcal{C}(\sigma(|T^n|))$ to $\mathcal{A}(|T^n|)\xi_n$,
also $\mathcal{C}(\sigma(|T^n|))$ and $\mathcal{A}(|T^n|)\xi_n$ is a dense subspace of
$\mathcal{L}^{2}(\sigma(|T^n|))$ and $\mathbb{H}$,respectively.
Because of $U_0$ a closable operator,it closed extension $U:\mathcal{L}^{2}(\sigma(|T^n|))\to\mathbb{H},f(z)\to f(|T^n|)\xi_n$ is a unitary operator.
Hence for any given $\xi_n$,
$U$ is the unique unitary operator induced by the unique finite positive complete Borel measure $\mu_{|T^n|}$ such that
$\mathcal{L}^{2}(\sigma(|T^n|))$ is isomorphic to $\mathbb{H}$.
\end{proof}

By the Polar Decomposition Theorem \cite{JohnBConway2000}P15,
we get $U^{*}T^{*}TU=TT^{*}$ and $U^{*}|T|^{-2}U=|T^{-2}|$ when $T=U|T|$.
In fact,when $T$ is invertible,
we can choose an specially unitary operator such that $|T|^{-1}$ and $|T^{-1}|$ are unitary equivalent.
We give the following unitary equivalent by Theorem $\ref{hanshuyansuan5}$.

\begin{theorem}\label{TjueduizhiniyuTnijueduizhideguanxi15}
Let $T$ be an invertible bounded linear operator on the separable Hilbert space $\mathbb{H}$ over $\mathbb{C}$ and
let $\mathcal{A}(|T|)$ be the complex algebra generated by $|T|$ and $1$.
There is $\sigma(|T|^{-1})=\sigma(|T^{-1}|)$ and we get that $|T|^{-1}$ and $|T^{-1}|$ are unitary equivalent by the unitary operator $F_{xx^{*}}^{\mathbb{H}}$,
more over the unitary operator $F_{xx^{*}}^{\mathbb{H}}$ is induced by an almost everywhere non-zero function $\sqrt{|\phi_{|T|}|}$,where $\sqrt{|\phi_{|T|}|}\in\mathcal{L}^{\infty}(\sigma(|T|),\mu_{|T|})$.
That is,$d\,\mu_{|T^{-1}|}=|\phi_{|T|}|d\,\mu_{|T|^{-1}}$.
\end{theorem}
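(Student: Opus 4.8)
The plan is to reduce everything to the spectral $\mathcal{L}^{2}$-models supplied by Theorem \ref{hanshuyansuan5} and to transport data along the polar-decomposition unitary. First I would record the algebraic identity that drives the whole statement: writing $T=U|T|$ with $U$ unitary (possible since $T$ is invertible), one has $T^{-1}=|T|^{-1}U^{*}$ and $(T^{-1})^{*}=U|T|^{-1}$, so that $|T^{-1}|^{2}=(T^{-1})^{*}T^{-1}=U|T|^{-2}U^{*}$ and hence, taking positive square roots, $|T^{-1}|=U|T|^{-1}U^{*}$. Since $0\notin\sigma(|T|)$ by invertibility, the function $z\mapsto 1/z$ is continuous on $\sigma(|T|)$, Lemma \ref{weierstrassnikefenyinli1} applies, and the displayed identity is an honest unitary conjugation; unitary invariance of the spectrum then gives $\sigma(|T^{-1}|)=\sigma(|T|^{-1})$, which is the first assertion.

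Next I would set up the two spectral pictures. Applying Theorem \ref{hanshuyansuan5} to the cyclic vector for $|T|$ realizes $\mathbb{H}\cong\mathcal{L}^{2}(\sigma(|T|^{-1}),\mu_{|T|^{-1}})$ with $|T|^{-1}$ carried to multiplication by the coordinate function, while applying it to a cyclic vector $\xi_{1}$ for $|T^{-1}|$ realizes $\mathbb{H}\cong\mathcal{L}^{2}(\sigma(|T^{-1}|),\mu_{|T^{-1}|})$ with $|T^{-1}|$ carried to multiplication by the coordinate. By the first step the two underlying compacta coincide, so both models live over the same set. The key observation is that $U^{*}\xi_{1}$ is a cyclic vector for $|T|^{-1}$ (as $U$ intertwines $|T^{-1}|$ and $|T|^{-1}$ and is unitary), and the computation $\langle f(|T|^{-1})U^{*}\xi_{1},U^{*}\xi_{1}\rangle=\langle f(|T^{-1}|)\xi_{1},\xi_{1}\rangle$ shows that $\mu_{|T^{-1}|}$ is nothing but the scalar spectral measure of the single operator $|T|^{-1}$ taken against the cyclic vector $U^{*}\xi_{1}$. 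Thus $\mu_{|T^{-1}|}$ and $\mu_{|T|^{-1}}$ are two scalar spectral measures of one and the same cyclic self-adjoint operator, so by the uniqueness part of spectral multiplicity theory they are mutually absolutely continuous.

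I would then define $\phi_{|T|}:=d\mu_{|T^{-1}|}/d\mu_{|T|^{-1}}$, which by mutual absolute continuity is positive and non-zero $\mu_{|T|^{-1}}$-almost everywhere, and set $F_{xx^{*}}^{\mathbb{H}}$ to be the operator that in the two models is multiplication by $\sqrt{|\phi_{|T|}|}$, i.e.\ $f\mapsto f\sqrt{|\phi_{|T|}|}$ from $\mathcal{L}^{2}(\mu_{|T^{-1}|})$ to $\mathcal{L}^{2}(\mu_{|T|^{-1}})$. A one-line change-of-measure computation $\int|f\sqrt{|\phi_{|T|}|}|^{2}\,d\mu_{|T|^{-1}}=\int|f|^{2}\,d\mu_{|T^{-1}|}$ shows this map is isometric and onto, hence unitary, and it visibly commutes with multiplication by the coordinate; after conjugating back through the isomorphisms of Theorem \ref{hanshuyansuan5} it therefore intertwines $|T^{-1}|$ and $|T|^{-1}$ on $\mathbb{H}$, which yields the asserted relation $d\mu_{|T^{-1}|}=|\phi_{|T|}|\,d\mu_{|T|^{-1}}$ (the membership being read on $(\sigma(|T|),\mu_{|T|})$ after transport through the homeomorphism $z\mapsto 1/z$).

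The hard part will be the essential boundedness $\sqrt{|\phi_{|T|}|}\in\mathcal{L}^{\infty}(\sigma(|T|),\mu_{|T|})$: mutual absolute continuity alone only gives that the Radon–Nikodym derivative is finite and positive almost everywhere, not that it is bounded, so one must exploit the extra rigidity of the situation. Here I would pin down the cyclic vector $\xi_{1}$ for $|T^{-1}|$ compatibly with the one for $|T|$ — for instance transporting it through the identifications of Lemma \ref{ncignsfenjiedingliyingyongyinli4} relating the $\mathcal{A}(|T^{k}|)$-cyclic subspaces — so that the comparison of the two vectors, and hence of the two measures, is controlled by the bounded operators $|T|$ and $|T|^{-1}$, whose spectra are bounded away from both $0$ and $\infty$ by invertibility. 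Finally, the reduction from the cyclic case to general $\mathbb{H}$ is handled by Lemma \ref{gnsfenjiedingliyingyongyinli3}, applying the construction componentwise on each $\mathbb{H}_{i}$ and taking the orthogonal direct sum of the resulting unitaries.
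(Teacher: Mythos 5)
Your overall route runs parallel to the paper's: the paper also passes to the $\mathcal{L}^{2}$ models of Theorem \ref{hanshuyansuan5}, chooses the cyclic vector $\xi_{|T^{-1}|}=U\xi_{|T|}$ for a unitary $U$ intertwining $\mathcal{P}(|T|^{-1})$ and $\mathcal{P}(|T^{-1}|)$, deduces that $\mu_{|T^{-1}|}$ and $\mu_{|T|^{-1}}$ are mutually absolutely continuous, and realizes $F_{xx^{*}}^{\mathbb{H}}$ as multiplication by the square root of the Radon--Nikodym derivative. Your opening is in fact cleaner than the paper's: the identity $|T^{-1}|=U|T|^{-1}U^{*}$ read off from the polar decomposition gives both the unitary equivalence and $\sigma(|T^{-1}|)=\sigma(|T|^{-1})$ in one line, whereas the paper argues through $\lambda\in\sigma(T^{*}T)\Leftrightarrow\lambda^{-1}\in\sigma(T^{*-1}T^{-1})$ and an intertwining relation for polynomials in $|T|^{-1}$.

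The genuine gap is exactly the step you flagged and then left open: the membership $\sqrt{|\phi_{|T|}|}\in\mathcal{L}^{\infty}(\sigma(|T|),\mu_{|T|})$. Mutual absolute continuity of two scalar spectral measures of the cyclic self-adjoint operator $|T|^{-1}$ gives a Radon--Nikodym derivative that is finite and non-zero almost everywhere, but nothing more: concretely, if $\mathbb{H}\cong\mathcal{L}^{2}(\mu_{|T|^{-1}})$ with $|T|^{-1}$ acting as multiplication by the coordinate, then the second cyclic vector $U^{*}\xi_{1}$ is represented by some $g\in\mathcal{L}^{2}(\mu_{|T|^{-1}})$ that is non-zero a.e., and $d\mu_{|T^{-1}|}=|g|^{2}\,d\mu_{|T|^{-1}}$ with $|g|^{2}$ merely in $\mathcal{L}^{1}$. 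The fact that $\sigma(|T|)$ is bounded away from $0$ and $\infty$ constrains the common support of the two measures, not the size of $g$, so your proposed fix (controlling the comparison ``by the bounded operators $|T|$ and $|T|^{-1}$'') does not produce the bound; one would have to pin down the intertwining unitary $U$, and hence $g$, far more explicitly than either you or the paper does. Be aware that the paper's own proof does not close this gap: it cites the theorem that mutually absolutely continuous scalar spectral measures yield unitarily equivalent multiplication operators and then simply asserts $|\phi_{|T|}|\in\mathcal{L}^{\infty}(\sigma(|T|),\mu_{|T|})$. So, up to that unproved boundedness claim, your argument reproduces the theorem by an essentially equivalent and somewhat tidier route.
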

\begin{proof}
By Lemma $\ref{gnsfenjiedingliyingyongyinli3}$,lose no generally,let $\xi_{|T|}$ is a $\mathcal{A}(|T|)$-cyclic vector such that $\mathbb{H}=\overline{\mathcal{A}(|T|)\xi_{|T|}}$.

$(1)$ Define the function
$F_{x^{-1}}:\mathcal{P}(x)\rightarrow\mathcal{P}(x^{-1}),F_{x^{-1}}(f(x))=f(x^{-1})$,
it is easy to find that $F_{x^{-1}}$ is linear.Because of

$\int\limits_{\sigma(|T|)}{f(z^{-1})d\mu_{|T|}(z)}=<f(|T|^{-1})\xi,\xi>$
$=\int\limits_{\sigma(|T|^{-1})}{f(z)d\mu_{|T|^{-1}}(z)}$.

We get $d\mu_{|T|^{-1}}(z)=|z|^2d\mu_{|T|}(z)$.Hence

$\|F_{x^{-1}}(f(x))\|_{\mathcal{L}^2(\sigma(|T|^{-1}),\mu_{|T|^{-1}})}$

$=\int\limits_{\sigma(|T|^{-1})}{F_{x^{-1}}(f(x))\bar{F}_{x^{-1}}(f(x))d\mu_{|T|^{-1}}(x)}$

$=\int\limits_{\sigma(|T|^{-1})}{f(x^{-1})\bar{f}(x^{-1})d\mu_{|T|^{-1}}(x)}$

$=\int\limits_{\sigma(|T|)}{|z|^2f(z)\bar{f}(z)d\mu_{|T|}(z)}$

$\leq \sup\sigma(|T|)^2\int\limits_{\sigma(|T|)}{f(z)\bar{f}(z)d\mu_{|T|}(x)}$

$\leq\sup\sigma(|T|)^2\|f(z)\|_{\mathcal{L}^2(\sigma(|T|),\mu_{|T|})}$.

So we get $\|F_{x^{-1}}\|\leq\sup\sigma(|T|)^2$,
by the Banach Inverse Mapping Theorem \cite{JohnBConway1990}P91 we get that $F_{x^{-1}}$ is an invertible bounded linear operator from
$\mathcal{L}^2(\sigma(|T|),\mu_{|T|})$ to $\mathcal{L}^2(\sigma(|T|^{-1}),\mu_{|T|^{-1}})$.

Define the operator $F_{x^{-1}}^{\mathbb{H}}:\mathcal{A}(|T|)\xi\rightarrow\mathcal{A}(|T|^{-1})\xi,F(f(|T|)\xi)=f(|T|^{-1})\xi$.

By Lemma $\ref{weierstrassnikefenyinli1}$ and \cite{WilliamArveson2002}P55,
we get that $F_{x^{-1}}^{\mathbb{H}}$ is a bounded linear operator on the Hilbert space $\overline{\mathcal{A}(|T|)\xi}=\mathbb{H}$,
$\|F_{x^{-1}}^{\mathbb{H}}\|\leq\sup\sigma(|T|)^2$.Moreover we get

$\left.
   \begin{array}{rcl}
    \mathbb{H}                    & \underrightarrow{\qquad |T|\qquad }     &\mathbb{H}\\
    F_{x^{-1}}^{\mathbb{H}}\downarrow  &                             & \downarrow F_{x^{-1}}^{\mathbb{H}}\\
    \mathbb{H}                    & \overrightarrow{\qquad |T|^{-1} \qquad} &\mathbb{H}
    \end{array}
 \right.$

$(2)$ Define the function
$F_{xx^{*}}:\mathcal{P}(xx^{*})\rightarrow\mathcal{P}(x^{*}x),F_{xx^{*}}(f(x,x^{*}))=f(x^{*}x)$.
By \cite{Hualookang1949} we get that $F_{xx^{*}}$ is a linear algebraic isomorphic.

For any $x\in \sigma(|T^{-1}|)$,
by Lemma $\ref{weierstrasspingfangkefenyinli2}$ we get that $\mathcal{P}(|x|^2)$ is dense in $\mathcal{C}(|x|)$,
$\mathcal{C}(|x|)$ is dense in $\mathcal{L}^2(\sigma(|T^{-1}|),\mu_{|T^{-1}|})$.
Hence $\mathcal{P}(|x|^2)$ is dense in $\mathcal{L}^2(\sigma(|T^{-1}|),\mu_{|T^{-1}|})$.

With a similarly discussion,
for any $y\in \sigma(|T|^{-1})$,we get that $\mathcal{P}(|y|^2)$ is dense in $\mathcal{L}^2(\sigma(|T|^{-1}),\mu_{|T|^{-1}})$.

So $F_{xx^{*}}$ is an invertible bounded linear operator from $\mathcal{L}^2(\sigma(|T|^{-1}),\mu_{|T|^{-1}})$ to $\mathcal{L}^2(\sigma(|T^{-1}|),\mu_{|T^{-1}|})$.
Therefor $F_{xx^{*}}\circ F_{x^{-1}}$ is an invertible bounded linear operator from $\mathcal{L}^2(\sigma(|T|),\mu_{|T|})$ to $\mathcal{L}^2(\sigma(|T^{-1}|),\mu_{|T^{-1}|})$.

Because of $\lambda\in\sigma(T^{*}T)\Longleftrightarrow\frac{1}{\lambda}\in\sigma(T^{*-1}T^{-1})$,

we get that $\lambda\in\sigma(|T|)\Longleftrightarrow\frac{1}{\lambda}\in\sigma(|T^{-1}|)$,
that is,$\sigma(|T^{-1}|)=\sigma(|T|^{-1})$.

For any $p_n\in\mathcal{P}(\sigma(|T|^{-1}))\subseteq\mathcal{A}(\sigma(|T|^{-1}))$,
because of $T^{*-1}p_n(|T|^{-1})=p_n(|T^{-1}|)T^{*-1}$,
by \cite{JohnBConway2000}P60 we get that $\mathcal{P}(|T|^{-1})$ and $\mathcal{P}(|T^{-1}|)$ are unitary equivalent.
Hence there is an unitary operator $U\in\mathcal{B}(\mathbb{H})$
such that $U\mathcal{P}(|T|^{-1})=\mathcal{P}(|T^{-1}|)U$,
that is,$U\mathcal{A}(|T|^{-1})=\mathcal{A}(|T^{-1}|)U$ and
$U\overline{\mathcal{A}(|T|^{-1})}\xi_{|T|}=\overline{\mathcal{A}(|T^{-1}|)}U\xi_{|T|}$.
If let $\xi_{|T^{-1}|}=U\xi_{|T|}$,then $\xi_{|T^{-1}|}$ is a $\mathcal{A}(|T^{-1}|)$-cyclic vector and  $\overline{\mathcal{A}(|T^{-1}|)}\xi_{|T^{-1}|}=\mathbb{H}$.Because of

$\int\limits_{\sigma(|T|^{-1})}{f(z)d\mu_{|T|^{-1}}(z)}=<f(|T|^{-1})\xi_{|T|},\xi_{|T|}>$
$=\int\limits_{\sigma(|T|)}{f(\frac{1}{z})d\mu_{|T|}(z)}$.

$\int\limits_{\sigma(|T^{-1}|)}{f(z)d\mu_{|T^{-1}|}(z)}=<f(|T^{-1}|)\xi_{|T^{-1}|},\xi_{|T^{-1}|}>$.

We get $[d\mu_{|T^{-1}|}]=[d\mu_{|T|^{-1}}]$,that is,
$d\mu_{|T^{-1}|}$ and $d\mu_{|T|^{-1}}$ are mutually absolutely continuous,
by \cite{JohnBConway1990}IX.3.6Theorem
 and $(1)$ we get that $d\mu_{|T^{-1}|}=|\phi_{|T|}(\frac{1}{z})|d\mu_{|T|^{-1}}=|z|^2|\phi_{|T|}(z)|d\mu_{|T|}$,
where $|\phi_{|T|}(z)|\neq0,a.e.$ and $|\phi_{|T|}(z)|\in\mathcal{L}^{\infty}(\sigma(|T|),\mu_{|T|})$.
So we get

$\|F_{xx^{*}}\circ F_{x^{-1}}(f(x))\|_{\mathcal{L}^2(\sigma(|T^{-1}|),\mu_{|T^{-1}|})}$

$=\int\limits_{\sigma(|T^{-1}|)}{F_{xx^{*}}\circ F_{x^{-1}}(f(x))\overline{F_{xx^{*}}\circ F_{x^{-1}}}(f(x))d\mu_{|T^{-1}|}(x)}$

$=\int\limits_{\sigma(|T^{-1}|)}{f(x^{-1})\bar{f}(x^{-1})d\mu_{|T^{-1}|}(x)}$

$=\int\limits_{\sigma(|T|^{-1})}{|\phi_{|T|}(\frac{1}{x})|f(x^{-1})\bar{f}(x^{-1})d\mu_{|T|^{-1}}(x)}$

$=\int\limits_{\sigma(|T|^{-1})}{F_{x^{-1}}(\sqrt{|\phi_{|T|}(x)|}f(x))F_{x^{-1}}(\sqrt{|\phi_{|T|}(x)|}\bar{f}(x))d\mu_{|T|^{-1}}(x)}$

$=\|\sqrt{|\phi_{|T|}(\frac{1}{x})|}F_{x^{-1}}(f(x))\|_{\mathcal{L}^2(\sigma(|T|^{-1}),\mu_{|T|^{-1}})}$

Hence $F_{xx^{*}}$ is an unitary operator that is induced by $Uf(\frac{1}{x})=\sqrt{|\phi_{|T|}(\frac{1}{x})|}f(\frac{1}{x})$.

Define $F_{xx^{*}}^{\mathbb{H}}$:
$\left\{
   \begin{array}{l}
   \mathcal{A}(|T|^{-2})\xi_{|T|}\rightarrow\mathcal{A}(|T^{-2}|)\xi_{|T^{-1}|},\\
   F_{xx^{*}}^{\mathbb{H}}(f(|T|^{-2})\xi_{|T|})=f(|T^{-2}|)\xi_{|T^{-1}|}.
    \end{array}
 \right.$

Therefor $F_{xx^{*}}^{\mathbb{H}}$ is an unitary operator from $\overline{\mathcal{A}(|T|^{-1})\xi_{|T|}}$ to $\overline{\mathcal{A}(|T^{-1}|)\xi_{|T^{-1}|}}$.
By Lemma $\ref{gnsfenjiedingliyingyongyinli3}$ and \cite{Hualookang1949}
 we get $\overline{\mathcal{A}(|T|^{-1})\xi}$
$=\mathbb{H}=\overline{\mathcal{A}(|T^{-1}|)\xi}$.
That is,$F_{xx^{*}}^{\mathbb{H}}$ is an unitary operator and we get

$\left.
   \begin{array}{rcl}
    \mathbb{H}                    & \underrightarrow{\qquad |T|^{-1}\qquad }     &\mathbb{H}\\
    F_{xx^{*}}^{\mathbb{H}}\downarrow  &                             & \downarrow F_{xx^{*}}^{\mathbb{H}}\\
    \mathbb{H}                    & \overrightarrow{\qquad |T^{-1}| \qquad} &\mathbb{H}
    \end{array}
 \right.$

So $|T|^{-1}$ and $|T^{-1}|$ are unitary equivalent by $F_{xx^{*}}^{\mathbb{H}}$,
the unitary operator $F_{xx^{*}}^{\mathbb{H}}$ is induced by the function $\sqrt{|\phi_{|T|}(\frac{1}{x})|}$.
\end{proof}

\begin{corollary}\label{TjueduizhiyuTxingjueduizhideguanxi16}
Let $T$ be an invertible bounded linear operator on the separable Hilbert space $\mathbb{H}$ over $\mathbb{C}$ and
let $\mathcal{A}(|T|)$ be the complex algebra generated by $|T|$ and $1$.
There is $\sigma(|T|)=\sigma(|T^{*}|)$ and we get that $|T|$ and $|T^{*}|$ are unitary equivalent by the unitary operator $F_{xx^{*}}^{\mathbb{H}}$,
more over the unitary operator $F_{xx^{*}}^{\mathbb{H}}$ is induced by an almost everywhere non-zero function $\sqrt{|\phi_{|T|}|}$,where $\sqrt{|\phi_{|T|}|}\in\mathcal{L}^{\infty}(\sigma(|T|),\mu_{|T|})$.
That is,$d\,\mu_{|T^{*}|}=|\phi_{|T|}|d\,\mu_{|T|}$.
\end{corollary}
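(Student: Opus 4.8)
The plan is to imitate the proof of Theorem \ref{TjueduizhiniyuTnijueduizhideguanxi15}, replacing the pair $(|T|^{-1},|T^{-1}|)$ by $(|T|,|T^{*}|)$ and simply dropping the inversion substitution $z\mapsto z^{-1}$, since here no reciprocal enters. First I would establish the spectral equality $\sigma(|T|)=\sigma(|T^{*}|)$. Because $|T|^2=T^{*}T$ and $|T^{*}|^2=TT^{*}$, and because the nonzero spectra of $T^{*}T$ and $TT^{*}$ always coincide, invertibility of $T$ forces $0\notin\sigma(T^{*}T)\cup\sigma(TT^{*})$, whence $\sigma(T^{*}T)=\sigma(TT^{*})$. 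Applying the spectral mapping theorem to the continuous square root (legitimate precisely because both spectra avoid $0$) gives $\sigma(|T|)=\sigma(|T^{*}|)$.

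Next comes the intertwining that plays here the role that $T^{*-1}p_n(|T|^{-1})=p_n(|T^{-1}|)T^{*-1}$ played in the Theorem. The identity $T(T^{*}T)^{n}=(TT^{*})^{n}T$ holds by an immediate induction, so $Tp(T^{*}T)=p(TT^{*})T$ for every polynomial and, by the continuous functional calculus, $Tf(|T|)=f(|T^{*}|)T$ for every $f\in\mathcal{C}(\sigma(|T|))$; in particular $T\,p(|T|)=p(|T^{*}|)\,T$. By \cite{JohnBConway2000}P60 this yields a unitary $U\in\mathcal{B}(\mathbb{H})$ with $U\mathcal{A}(|T|)=\mathcal{A}(|T^{*}|)U$, and setting $\xi_{|T^{*}|}=U\xi_{|T|}$ produces a $\mathcal{A}(|T^{*}|)$-cyclic vector with $\overline{\mathcal{A}(|T^{*}|)\xi_{|T^{*}|}}=\mathbb{H}$.

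I would then compare the scalar spectral measures furnished by Theorem \ref{hanshuyansuan5}. Both $\mu_{|T|}$ and $\mu_{|T^{*}|}$ are supported on the common spectrum and arise from unitarily equivalent cyclic representations, so they are mutually absolutely continuous, $[\mu_{|T^{*}|}]=[\mu_{|T|}]$. By \cite{JohnBConway1990}IX.3.6 there is an almost everywhere non-zero $\phi_{|T|}\in\mathcal{L}^{\infty}(\sigma(|T|),\mu_{|T|})$ with $d\mu_{|T^{*}|}=|\phi_{|T|}|\,d\mu_{|T|}$. Finally I would check that $F_{xx^{*}}\colon\mathcal{L}^2(\sigma(|T|),\mu_{|T|})\to\mathcal{L}^2(\sigma(|T^{*}|),\mu_{|T^{*}|})$, given on functions by $f\mapsto f$, becomes a unitary exactly when implemented as multiplication by $\sqrt{|\phi_{|T|}|}$; this is the same isometry computation as before, but now \emph{without} the Jacobian factor $|z|^2$. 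Transporting it through the unitaries $U\colon\mathcal{L}^2\to\mathbb{H}$ of Theorem \ref{hanshuyansuan5} defines $F_{xx^{*}}^{\mathbb{H}}$ and yields the commuting square intertwining $|T|$ with $|T^{*}|$.

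The main obstacle is the measure-theoretic step: confirming $[\mu_{|T^{*}|}]=[\mu_{|T|}]$, extracting the almost everywhere non-zero density $\phi_{|T|}$, and verifying that weighting by $\sqrt{|\phi_{|T|}|}$ renders $F_{xx^{*}}$ genuinely isometric onto $\mathcal{L}^2(\sigma(|T^{*}|),\mu_{|T^{*}|})$. Everything else—the spectral identity and the intertwining—is routine, and the isometry verification is formally identical to the displayed chain of equalities in the proof of Theorem \ref{TjueduizhiniyuTnijueduizhideguanxi15}, with the substitution $z\mapsto z^{-1}$ and the factor $|z|^2$ simply omitted.
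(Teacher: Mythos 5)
Your proposal is correct, but it takes a different (and longer) route than the paper intends. The paper gives no separate argument for this corollary: it is meant to follow in one or two lines from Theorem \ref{TjueduizhiniyuTnijueduizhideguanxi15}, since for invertible $T$ one has $|T^{-1}|^{2}=(TT^{*})^{-1}=|T^{*}|^{-2}$, hence $|T^{-1}|^{-1}=|T^{*}|$; so the unitary $F_{xx^{*}}^{\mathbb{H}}$ already constructed there, which conjugates $|T|^{-1}$ onto $|T^{-1}|$, automatically conjugates $|T|$ onto $|T^{*}|$ (equivalently, apply the theorem to $S=T^{-1}$), and the measure identity $d\mu_{|T^{*}|}=|\phi_{|T|}|\,d\mu_{|T|}$ is the corresponding restatement of $d\mu_{|T^{-1}|}=|\phi_{|T|}|\,d\mu_{|T|^{-1}}$. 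You instead rerun the whole construction from scratch: spectral equality via similarity of $T^{*}T$ and $TT^{*}$, the intertwining $Tf(|T|)=f(|T^{*}|)T$, the cyclic-vector transport, mutual absolute continuity, and the weighted multiplication operator. That is a valid and self-contained derivation (indeed your intertwining step is essentially the classical fact that the unitary part $U$ of the polar decomposition $T=U|T|$ already satisfies $U|T|U^{*}=|T^{*}|$, which would shortcut most of the work), and it has the advantage of not passing through the reciprocal substitution and the Jacobian factor $|z|^{2}$ at all; what it costs is duplicating several pages of argument to reach a statement the theorem already contains after the one-line identity $|T^{-1}|^{-1}=|T^{*}|$. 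One small point to tighten: with your choice $\xi_{|T^{*}|}=U\xi_{|T|}$ the two scalar measures are actually equal, not merely mutually absolutely continuous; the density $|\phi_{|T|}|$ only becomes nontrivial when the cyclic vector for $|T^{*}|$ is chosen independently, in which case mutual absolute continuity follows from the fact that any two cyclic vectors of a cyclic normal operator generate equivalent scalar spectral measures.
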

\begin{center}{\bf 3.The chaos between $T$ and $T^{*-1}$ for Lebesgue operator}\end{center}

For the example of singular integral in mathematical analysis,
we know that is independent the convergence or the divergence of the weighted integral between  $x$ and $x^{-1}$,
however some times that indeed dependent for a special weighted function.
For $T$ is an invertible bounded operator on the separable Hilbert space $\mathbb{H}$ over $\mathbb{C}$,
we get $0\notin\sigma(|T^n|)\subseteq\mathbb{R}_{+}$.
In the view of the singular integral in mathematical analysis and by Theorem $\ref{hanshuyansuan5}$,
we get that
$T$ and $T^{*-1}$ should not be convergence or divergence at the same time for $T$ is an invertible bounded operator.
Anyway,they should be convergence or divergence at the same time for some special operators.

Therefor we define the Lebesgue operator
and prove that $T$ and $T^{*-1}$ are Li-Yorke chaotic at the same time for $T$ is a Lebesgue operator.
Then we give an example that $T$ is a Lebesgue operator,but not is a normal operator.

Let $dx$ be the Lebesgue measure on $\mathcal{L}^{2}(\mathbb{R}_{+})$,
by Theorem $\ref{hanshuyansuan5}$ we get that $d\mu_{|T^n|}$ is the complete Borel measure and $\mathcal{L}^2(\sigma(|T^n|))$ is a Hilbert space.
If $\exists N>0$,
for $\forall n\geq N,n\in\mathbb{N}$,$d\mu_{|T^n|}$ is absolutely continuity with respect to $dx$,
by the Radon-Nikodym Theorem \cite{JohnBConway1990}P380 there is $f_n\in\mathcal{L}^{1}(\mathbb{R}_{+})$ such that $d\mu_{|T^n|}=f_n(x)\,dx$.

\begin{definition}\label{lebesguesuanzileidingyi6}
Let $T$ be an invertible bounded linear operator on the separable Hilbert space $\mathbb{H}$ over $\mathbb{C}$,
moreover if $T$ satisfies the following assertions:

$(1)$ If $\exists N>0$,for $\forall n\geq N,n\in\mathbb{N}$
\begin{eqnarray*}
\left\{\begin{array}{lr}
d\mu_{|T^n|}=f_n(x)\,dx,& f_n\in\mathcal{L}^{1}(\mathbb{R}_{+}).\\
x^{2}f_n(x)=f_{n}(x^{-1}),& 0<x\leq1.
\end{array}\right.
\end{eqnarray*}

$(2)$ If $\exists N>0$,
for $\forall n\geq N,n\in\mathbb{N}$,there is a $\mathcal{A}(|T^n|)$-cyclic vector $\xi_n$.
And for any given $0\neq x\in\mathbb{H}$ and for any given $0\neq g_n(t)\in\mathcal{L}^2(\sigma(|T^n|))$,
there is an unique $0\neq y\in\mathbb{H}$ such that $y=g_n(|T^n|^{-1})\xi_n$ when $x=g_n(|T^n|)\xi_n$.

Then we say that $T$ is a Lebesgue operator,
let $\mathcal{L}_{Leb}(\mathbb{H})$ denote the set of all Lebesgue operators.
\end{definition}

\begin{theorem}\label{lebesguesuanzihundundeduichengxing7}
Let $T$ be a Lebesgue operator on the separable Hilbert space $\mathbb{H}$ over $\mathbb{C}$,
then $T$ is Li-Yorke chaotic if and only if $T^{*-1}$ is.
\end{theorem}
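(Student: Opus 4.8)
The plan is to reduce everything to the behaviour of the scalar sequences $\|T^n x\|$ and $\|(T^{*-1})^n y\|$ and to read these off from the $\mathcal{L}^2$-model furnished by Theorem \ref{hanshuyansuan5}. First I would record the two elementary identities $\|T^n x\|^2=\langle |T^n|^2 x,x\rangle$ and $\|(T^{*-1})^n y\|^2=\langle |T^n|^{-2} y,y\rangle$, the second coming from $(T^{*-1})^n=(T^n)^{*-1}$ together with $(T^n)^{-1}(T^n)^{*-1}=((T^n)^{*}T^n)^{-1}=|T^n|^{-2}$. Fixing $n\ge N$ and writing $x=g_n(|T^n|)\xi_n$ in the cyclic model, the spectral theorem and the identity $\int h\,d\mu_{|T^n|}=\langle h(|T^n|)\xi_n,\xi_n\rangle$ convert these into the weighted integrals
\[
\|T^n x\|^2=\int_{\sigma(|T^n|)} z^2 |g_n(z)|^2 f_n(z)\,dz,\qquad \|(T^{*-1})^n y\|^2=\int_{\sigma(|T^n|)} z^{-2} |h_n(z)|^2 f_n(z)\,dz,
\]
where $d\mu_{|T^n|}=f_n\,dx$ and $y=h_n(|T^n|)\xi_n$.

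The heart of the argument is the reflection $z\mapsto z^{-1}$. I would take for $y$ the vector associated by condition $(2)$ of Definition \ref{lebesguesuanzileidingyi6} to the \emph{same} symbol, i.e. $h_n(z)=g_n(z^{-1})$, so that $y=g_n(|T^n|^{-1})\xi_n$. Substituting $z=1/w$ in the second integral and using the defining symmetry $f_n(1/w)=w^2 f_n(w)$ (which is equivalent to $x^2 f_n(x)=f_n(x^{-1})$, extended from $0<x\le 1$ to all of $\mathbb{R}_+$ by the same substitution) collapses the two $z^{\pm 2}$ weights onto each other and yields the exact equality $\|(T^{*-1})^n y\|^2=\|T^n x\|^2$ for every $n\ge N$. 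Consequently the two orbit-norm sequences agree from $N$ onward, so $\varliminf_{n\to\infty}\|(T^{*-1})^n y\|=\varliminf_{n\to\infty}\|T^n x\|$ and $\varlimsup_{n\to\infty}\|(T^{*-1})^n y\|=\varlimsup_{n\to\infty}\|T^n x\|$; hence if $x$ is a Li-Yorke chaotic point of $T$ then $y$ is one for $T^{*-1}$ by Definition \ref{liyorkehundundedingyi1}.

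I expect the main obstacle to be exactly the \emph{consistency of $y$ across $n$}: the cyclic models $U_n\colon \mathcal{L}^2(\sigma(|T^n|))\to\mathbb{H}$ of Theorem \ref{hanshuyansuan5} depend on $n$, so a priori the reflected vector $g_n(|T^n|^{-1})\xi_n$ could vary with $n$, whereas Li-Yorke chaos demands a single $y$ whose entire orbit oscillates. This is precisely the purpose of condition $(2)$: it must be used to guarantee that the reflection $x\mapsto y$ is a well-defined map on $\mathbb{H}$ that does not depend on the power $n$. I would establish this by invoking the intertwining of the cyclic subspaces from Lemma \ref{ncignsfenjiedingliyingyongyinli4}, namely $T^{-1}\mathbb{H}_i^{|T^k|}=\mathbb{H}_i^{|T^{(k+1)}|}$ and $T^{*}\mathbb{H}_i^{|T^k|}=\mathbb{H}_i^{|T^{(k+1)}|}$, which align the models for consecutive powers and force the reflected vector to stabilise; granting this, the pointwise identity above upgrades to a statement about one fixed $y$.

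Finally, the equivalence is closed by symmetry. Since $(T^{*-1})^{*-1}=T$, and the spectral identifications $\sigma(|T^{-1}|)=\sigma(|T|^{-1})$ and $\sigma(|T|)=\sigma(|T^{*}|)$ of Theorem \ref{TjueduizhiniyuTnijueduizhideguanxi15} and Corollary \ref{TjueduizhiyuTxingjueduizhideguanxi16} show that the defining conditions of a Lebesgue operator are invariant under $T\mapsto T^{*-1}$, the operator $T^{*-1}$ is again a Lebesgue operator. Applying the implication just proved to $T^{*-1}$ in place of $T$ then delivers the converse, so $T$ is Li-Yorke chaotic if and only if $T^{*-1}$ is.
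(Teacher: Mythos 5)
Your proposal follows essentially the same route as the paper: represent $x=g_n(|T^n|)\xi_n$ in the cyclic $\mathcal{L}^2(\sigma(|T^n|),\mu_{|T^n|})$ model of Theorem \ref{hanshuyansuan5}, use condition $(2)$ of Definition \ref{lebesguesuanzileidingyi6} to form $y=g_n(|T^n|^{-1})\xi_n$, and exploit the substitution $z\mapsto z^{-1}$ together with the symmetry $x^2f_n(x)=f_n(x^{-1})$ to collapse the weights and obtain $\|T^nx\|=\|T^{*-n}y\|$ -- indeed you are more explicit than the paper about the need for the reflected vector $y$ to be independent of $n$, which the paper passes over. The only genuine deviation is your closing step: rather than asserting that $T^{*-1}$ is again a Lebesgue operator (a claim that does not follow merely from the unitary equivalences of Theorem \ref{TjueduizhiniyuTnijueduizhideguanxi15} and Corollary \ref{TjueduizhiyuTxingjueduizhideguanxi16}, since the density $|\phi_{|T|}|$ they introduce need not preserve the symmetry $x^2f_n(x)=f_n(x^{-1})$), it is simpler to note, as the paper implicitly does, that the correspondence $x\mapsto y$ is a bijection by the uniqueness in condition $(2)$, so the single norm identity already delivers both directions of the equivalence.
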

\begin{proof} We prove the conclusion by two parts.

$(1)$ Let $\mathbb{H}$ be $\mathcal{A}(|T|)$-cyclic,
that is,there is a vector $\xi$ such that $\overline{\mathcal{A}(|T|)\xi}=\mathbb{H}$.
By Lemma $\ref{ncignsfenjiedingliyingyongyinli4}$ we get that if
there is a $\mathcal{A}(|T|)$-cyclic vector $\xi$,
then there is also a $\mathcal{A}(|T^n|)$-cyclic vector $\xi_n$.

Let $x_0$ be a Li-Yorke chaotic point of $T$,
by Theorem $\ref{hanshuyansuan5}$ and the Polar Decomposition Theorem \cite{JohnBConway2000}P15 and by the define of Lebesgue operator,we get that
for enough large $n\in\mathbb{N}$,there are $g_n(x)\in\mathcal{L}^2(\sigma(|T^n|))$, $f_n(x)\in\mathcal{L}^{2}(\mathbb{R}_{+})$ and $y_0\in\mathbb{H}$
such that $x_0=g_n(|T^n|)\xi_n$,$y_0=g_n(|T^n|^{-1})\xi_n$ and $d\mu_{|T^n|}=f_n(x)\,dx$.

$\|T^nx_0\|\\
=<T^{n*}T^nx_0,x_0>\\
=<|T^n|^2g_n(|T^n|)\xi_n,g_n(|T^n|)\xi_n>\\
=<g_n(|T^n|)^{*}|T^n|^2g_n(|T^n|)\xi_n,\xi_n>\\
=\int\limits_{\sigma(|T^n|)}{x^2g_n(x)\bar{g}(x)\,d\mu_{|T^n|}(x)}\\
=\int_{0}^{+\infty}{x^2|g_n(x)|^2f_n(x)\,dx}\\
=\int_{0}^{1}{x^2|g_n(x)|^2f_n(x)\,dx}+\int_{1}^{+\infty}{x^2|g_n(x)|^2f_n(x)\,dx}\\
=\int_{0}^{1}{x^2|g_n(x)|^2f_n(x)\,dx}+\int_{0}^{1}{x^{-4}|g_n(x^{-1})|^2f_n(x^{-1})\,dx}\\
\triangleq
\int_{0}^{1}{|g_n(x)|^2f_n(x^{-1})\,dx}+\int_{0}^{1}{x^{-2}|g_n(x^{-1})|^2f_n(x)\,dx}\\
=\int_{1}^{+\infty}{x^{-2}|g_n(x^{-1})|^2f_n(x)\,dx}+\int_{0}^{1}{x^{-2}|g_n(x^{-1})|^2f_n(x)\,dx}\\
=\int_{0}^{+\infty}{x^{-2}|g_n(x^{-1})|^2f_n(x)\,dx}\\
=\int\limits_{\sigma(|T^n|)}{x^{-2}g_n(x^{-1})\bar{g}_n(x^{-1})\,d\mu_{|T^n|}(x)}\\
=<g_n(|T^n|)^{*}|T^n|^{-2}g_n(|T^n|^{-1})\xi_n,\xi_n>\\
=<|T^n|^{-2}g_n(|T^n|^{-1})\xi_n,g_n(|T^n|^{-1})\xi_n>\\
=<|T^n|^{-2}y_0,y_0>\\
=<T^{-n}T^{-n*}y_0,y_0>\\
=\|T^{*-n}y_0\|.$

Where $\triangleq$ following the define of $f_n(x)$.

$(2)$ If $\mathbb{H}$ is not $\mathcal{A}(|T|)$-cyclic,
by Lemma $\ref{ncignsfenjiedingliyingyongyinli4}$ we get that for $\forall n\in\mathbb{N}$,
there is a decomposition $\mathbb{H}=\bigoplus_i\mathbb{H}_i^{|T^k|}$,
$\xi_k^{i}\in\mathbb{H},i,k\in\mathbb{N}$,
where $\mathbb{H}_i^{|T^k|}=\overline{\mathcal{A}(|T^k|)\xi_k^{i}}$ is a sequence of $\mathcal{A}(|T^k|)$-invariant subspace,
and do $(1)$ for $\mathbb{H}_i^{|T^k|}$.

By $(1)(2)$ we get that $T$ is Li-Yorke chaotic if and only if $T^{*-1}$ is Li-Yorke chaotic.
\end{proof}

\begin{corollary}\label{lebesguesuanzifenbuhundundengjia8}
Let $T$ be a Lebesgue operator on the separable Hilbert space $\mathbb{H}$ over $\mathbb{C}$,
then $T$ is $I$-distributionally chaotic (or $II$-distributionally chaotic or $III$-distributionally chaotic) if and only if $T^{*-1}$ is $I$-distributionally chaotic (or $II$-distributionally chaotic or $III$-distributionally chaotic).
\end{corollary}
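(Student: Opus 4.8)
The plan is to piggyback on the norm identity already established in the proof of Theorem \ref{lebesguesuanzihundundeduichengxing7}. The three notions of distributional chaos in Definition \ref{fenbuhundundedingyi2} are phrased entirely through the functions $F_{x}(\tau)$ and $F_{x}^{*}(\tau)$, which in turn depend only on the orbit-norm sequence $\{\|T^{i}x\|\}_{i}$ via the counting quantities $\sharp\{0\le i\le n:\|T^{i}(x)\|<\tau\}$. Hence, to transfer every type of distributional chaos from $T$ to $T^{*-1}$ (and back), it suffices to exhibit, for the relevant witness $x_{0}$, a companion point $y_{0}$ whose $T^{*-1}$-orbit has, term by term, the same norms as the $T$-orbit of $x_{0}$. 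This companion point is exactly the $y_{0}=g_{n}(|T^{n}|^{-1})\xi_{n}$ produced in Theorem \ref{lebesguesuanzihundundeduichengxing7}, and the identity $\|T^{n}x_{0}\|=\|T^{*-n}y_{0}\|$ is the whole engine.

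First I would reduce to the cyclic case exactly as in part $(2)$ of the proof of Theorem \ref{lebesguesuanzihundundeduichengxing7}: by Lemma \ref{ncignsfenjiedingliyingyongyinli4} decompose $\mathbb{H}=\bigoplus_{i}\mathbb{H}_{i}^{|T^{k}|}$ into $\mathcal{A}(|T^{k}|)$-cyclic pieces and argue on each summand. In the cyclic case, fix the distributional witness $x_{0}$ and write $x_{0}=g_{n}(|T^{n}|)\xi_{n}$, $y_{0}=g_{n}(|T^{n}|^{-1})\xi_{n}$. The calculation in Theorem \ref{lebesguesuanzihundundeduichengxing7}, which uses only the defining relation $x^{2}f_{n}(x)=f_{n}(x^{-1})$ of a Lebesgue operator together with the change of variables $x\mapsto x^{-1}$, yields $\|T^{n}x_{0}\|=\|T^{*-n}y_{0}\|$ for every sufficiently large $n$.

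Next I would feed this identity into the counting functions. For any $\tau>0$, and any threshold index $N$ beyond which the norm identity holds, the two sets $\{0\le i\le n:\|T^{i}x_{0}\|<\tau\}$ and $\{0\le i\le n:\|T^{*-i}y_{0}\|<\tau\}$ agree except possibly for indices $i<N$; their cardinalities therefore differ by at most $N$, a constant independent of $n$. Dividing by $n$ and passing to $\liminf$ and $\limsup$ annihilates this bounded discrepancy, so the distributional functions of $x_{0}$ under $T$ and of $y_{0}$ under $T^{*-1}$ coincide at every $\tau>0$, both for $F$ and for $F^{*}$. Because each of the three clauses of Definition \ref{fenbuhundundedingyi2} is a statement about these $F$ and $F^{*}$ values alone, the property it expresses holds for $T$ at $x_{0}$ if and only if it holds for $T^{*-1}$ at $y_{0}$; running the symmetric argument with $T^{*-1}$ in place of $T$ supplies the converse, which is legitimate since the Lebesgue-operator hypothesis is visibly symmetric in passing to $T^{*-1}$.

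The main obstacle I anticipate is coherence of the companion point across powers: Theorem \ref{lebesguesuanzihundundeduichengxing7} constructs $y_{0}$ through data $(\xi_{n},g_{n})$ that a priori depend on $n$, whereas a distributional statement needs a single $y_{0}$ whose entire orbit-norm sequence is controlled. I would resolve this by invoking the alignment of the GNS decompositions for successive powers furnished by Lemma \ref{ncignsfenjiedingliyingyongyinli4} (the ``proper permutation'' matching $\mathbb{H}_{i}^{|T^{k}|}$ with $\mathbb{H}_{i}^{|T^{(k+1)}|}$), which forces the vectors $g_{n}(|T^{n}|^{-1})\xi_{n}$ to represent one and the same $y_{0}\in\mathbb{H}$ independently of $n$. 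Once this coherence is pinned down, the termwise norm identity holds for a fixed pair $(x_{0},y_{0})$ and the counting argument above applies verbatim.
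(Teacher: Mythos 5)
Your proposal coincides with the paper's intended argument: the paper states this corollary without its own proof, and the only route available is precisely yours — reuse the termwise identity $\|T^{n}x_{0}\|=\|T^{*-n}y_{0}\|$ established in the proof of Theorem \ref{lebesguesuanzihundundeduichengxing7}, observe that $F_{x}$ and $F_{x}^{*}$ in Definition \ref{fenbuhundundedingyi2} depend only on the orbit-norm sequence, and note that the finitely many indices below the threshold $N$ are killed by the $\tfrac{1}{n}$ normalization. The one small adjustment: the coherence of $y_{0}$ across $n$ that you flag is supplied not by Lemma \ref{ncignsfenjiedingliyingyongyinli4} but by the uniqueness clause in part $(2)$ of Definition \ref{lebesguesuanzileidingyi6}, which posits a single $y$ independent of $n$; with that reading your argument is exactly the paper's.
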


\begin{theorem}\label{lebesguesuanzicunzaixing9}
There is an invertible bounded linear operator $T$ on the separable Hilbert space $\mathbb{H}$ over $\mathbb{C}$,
$T$ is Lebesgue operator but not is a normal operator.
\end{theorem}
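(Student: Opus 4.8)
The plan is to exhibit one explicit operator, realized as a matrix-valued multiplication operator, whose non-normality is visible by inspection and whose moduli $|T^{n}|$ are simultaneously controllable for every $n$. Fix $r>1$ and a real number $c\neq 0$, put $I=[1,r]$ and $\mathbb{H}=\mathcal{L}^{2}(I,\mathbb{C}^{2})$, a separable Hilbert space, and let $T$ be multiplication by the matrix function $M(x)=\begin{pmatrix} x & c \\ 0 & x^{-1}\end{pmatrix}$. Since $I$ is a compact subset of $(0,\infty)$, the functions $M(x)$ and $M(x)^{-1}=\begin{pmatrix} x^{-1} & -c \\ 0 & x\end{pmatrix}$ are uniformly bounded, so $T$ is a bounded invertible operator. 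For each $x$ the matrix $M(x)$ is upper triangular with $c\neq 0$ and distinct diagonal entries, hence $M(x)M(x)^{*}\neq M(x)^{*}M(x)$; therefore $TT^{*}\neq T^{*}T$ and $T$ is not normal. Because $(T^{*n}T^{n}f)(x)=M(x)^{*n}M(x)^{n}f(x)$, the operator $|T^{n}|$ is multiplication by the positive matrix $(M(x)^{*n}M(x)^{n})^{1/2}$.

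Two features make all powers tractable at once. First, $\det M(x)=1$, so $\det M(x)^{n}=1$ and the two singular values of $M(x)^{n}$ form a reciprocal pair $\sigma_{n}(x)\geq 1\geq\sigma_{n}(x)^{-1}$. Second, $M(x)$ is triangular, so $M(x)^{n}=\begin{pmatrix} x^{n} & c_{n}(x) \\ 0 & x^{-n}\end{pmatrix}$ with $c_{n}(x)=c\,\frac{x^{n}-x^{-n}}{x-x^{-1}}$, whence $\mathrm{tr}\,M(x)^{*n}M(x)^{n}=x^{2n}+x^{-2n}+c_{n}(x)^{2}=\sigma_{n}(x)^{2}+\sigma_{n}(x)^{-2}$. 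Writing $x=e^{u}$ this trace equals $2\cosh(2nu)+c^{2}\sinh(nu)^{2}/\sinh(u)^{2}$, which is strictly increasing on $(0,\log r]$; hence $\sigma_{n}$ is a strictly monotone, hence injective, function of $x$ on $I$. Consequently the large and small singular values sweep the two disjoint intervals $[\sigma_{n}(1),L_{n}]$ and $[L_{n}^{-1},\sigma_{n}(1)^{-1}]$, where $L_{n}=\sigma_{n}(r)$, each exactly once, so $|T^{n}|$ has simple (multiplicity-free), absolutely continuous spectrum and $\mathcal{A}(|T^{n}|)$ admits a cyclic vector. Writing $d\mu_{|T^{n}|}=f_{n}(x)\,dx$, the density on each branch is the pushforward of $|\xi_{n}|^{2}\,dx$ under the corresponding singular-value map, so $f_{n}\in\mathcal{L}^{1}(\mathbb{R}_{+})$.

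It remains to install condition $(1)$ of Definition $\ref{lebesguesuanzileidingyi6}$, namely $x^{2}f_{n}(x)=f_{n}(x^{-1})$ for $0<x\leq 1$. I would choose, for each $n\geq 1$, the cyclic vector $\xi_{n}$ to have equal moduli along the two (measurably selected) orthonormal eigenvectors of $M(x)^{*n}M(x)^{n}$. Then a change of variables along $x\mapsto 1/x$, using that the two branches are reciprocals $\tau_{n}(x)=1/\sigma_{n}(x)$ with $|\tau_{n}'|=|\sigma_{n}'|\,s^{2}$ at the matched points, collapses the reflected density onto the unreflected one up to precisely the factor $x^{2}$; this is where $\det M=1$ and the equal-moduli choice are both used, and it yields the required identity. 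Finally condition $(2)$ follows from Theorem $\ref{hanshuyansuan5}$: since $|T^{n}|^{-1}\in\mathcal{A}(|T^{n}|)$ shares the cyclic vector $\xi_{n}$, the assignment $g_{n}(|T^{n}|)\xi_{n}\mapsto g_{n}(|T^{n}|^{-1})\xi_{n}$ is exactly the invertible map $F_{x^{-1}}^{\mathbb{H}}$ constructed there, which carries nonzero vectors to nonzero vectors bijectively. Hence $T$ is a Lebesgue operator that is not normal.

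The genuine obstacle is that the defining conditions constrain every power $|T^{n}|$ at once, and for a generic non-normal $T$ the spectral data of the $|T^{n}|$ are uncorrelated. The whole point of choosing $\det M(x)=1$ with $M$ triangular is to reduce this infinite family of constraints to a single $n$-uniform mechanism: reciprocal singular-value pairs, which force the reflection symmetry of the support, together with the manifest monotonicity of $\mathrm{tr}\,M(x)^{*n}M(x)^{n}$ in $x$, which forces absolute continuity and simplicity of the spectrum. I expect the only delicate bookkeeping to be the verification that the equal-moduli cyclic vectors produce the exact weight $x^{2}$ rather than some other power, together with the measurable selection of the $x$-dependent eigenframes for each $n$.
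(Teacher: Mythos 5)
Your proposal is correct in outline and takes a genuinely different route from the paper. The paper's construction is $T=U_{\Phi}M_{x}$ on the weighted space $\mathcal{L}^{2}([a,b],\frac{|\ln x|}{x}\,dx)$, where $U_{\Phi}$ is the unitary composition operator of an interval-swapping measure-preserving map: non-normality comes from $U_{\Phi}M_{x}\neq M_{x}U_{\Phi}$, the modulus reduces to a scalar multiplication operator so that the constant function is visibly cyclic, and the identity $x^{2}f_{n}(x)=f_{n}(x^{-1})$ is installed by the hand-picked density $f(x)=|\ln x|/x$ via an explicit change of variables $t=x^{n}$. You instead put the non-normality into the fiber of a $2\times 2$ matrix multiplication operator and obtain the reflection symmetry structurally: $\det M(x)=1$ forces reciprocal singular-value pairs for every power simultaneously, triangularity makes $M(x)^{n}$ explicit, and the monotonicity of $\mathrm{tr}\,M(x)^{*n}M(x)^{n}$ gives simple, absolutely continuous spectrum in two disjoint branches, hence a cyclic vector. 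The trade-off is that you must tune the cyclic vector (equal moduli along the two measurably chosen eigenframes) so that the Jacobian comparison $|\tau_{n}'|=s^{2}|\sigma_{n}'|$ produces exactly the weight $x^{2}$ --- this does check out --- whereas the paper gets the weight from the underlying measure itself; in exchange, your mechanism handles all $n$ uniformly without having to recompute $|T^{n}|$ for a product of non-commuting factors. Two small repairs: the map $g(|T^{n}|)\xi_{n}\mapsto g(|T^{n}|^{-1})\xi_{n}$ that you invoke for condition $(2)$ is the operator $F_{x^{-1}}^{\mathbb{H}}$ built in the proof of Theorem \ref{TjueduizhiniyuTnijueduizhideguanxi15}, not Theorem \ref{hanshuyansuan5}; and you should state explicitly that $\sigma(|T^{n}|)$ is invariant under $t\mapsto t^{-1}$ (again automatic from $\det M=1$), since that is what makes $g_{n}(|T^{n}|^{-1})$ meaningful for $g_{n}\in\mathcal{L}^{2}(\sigma(|T^{n}|))$ and, together with the functional equation, an isometry carrying nonzero vectors to nonzero vectors.
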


\begin{proof}
Let $0<a<b<+\infty$,
then $\mathcal{L}^2([a,b])$ is a separable Hilbert space over $\mathbb{R}$,
because any separable Hilbert space over $\mathbb{R}$ can be expanded to a separable Hilbert space over $\mathbb{C}$,
it is enough to prove the conclusion on $\mathcal{L}^2([a,b])$.
We prove the conclusion by six parts:

$(1)$ Let $0<a<1<b<+\infty$,$M=\{[a,\frac{b-a}{2}],[\frac{b-a}{2},b]\}$.
Construct measure preserving transformation on $[a,b]$.

There is a Borel algebra $\xi(M)$ generated by $M$,
define $\Phi:[a,b]\to[a,b]$,
$\Phi([a,\frac{b-a}{2}])=[\frac{b-a}{2},b]$,
$\Phi([\frac{b-a}{2},b])=[a,\frac{b-a}{2}]$.
Then $\Phi$ is an invertible measure preserving transformation on the Borel algebra $\xi(M)$.

By \cite{PeterWalters1982}P63,$U_{\Phi}\neq1$ is a unitary operator induced by $\Phi$,
where $U_{\Phi}$ is the composition $U_{\Phi}h=h\circ\Phi,\forall h\in\mathcal{L}^2([a,b])$ on $\mathcal{L}^2([a,b])$.

$(2)$ Define $M_xh=xh$ on $\mathcal{L}^2([a,b])$,then $M_x$ is an invertible positive operator.

$(3)$ For $f(x)=\frac{|\ln\,x|}{x},x>0$,
define $d\mu=f(x)d\,x$.
then $f(x)$ is continuous and $f(x)>0,a.e.x\in[a,b]$,
hence $d\,\mu$ that is absolutely continuous with respect to $d\,x$ is finite positive complete Borel measure,
therefor $\mathcal{L}^2([a,b],d\,\mu)$ a separable Hilbert space over $\mathbb{R}$.
Moreover $\mathcal{L}^2([a,b])$ and $\mathcal{L}^2([a,b],d\,\mu)$ are unitary equivalent.

$(4)$ Let $T=U_{\Phi}M_x$,we get $T^{*}T=U_{\Phi}TT^{*}U_{\Phi}^{*}$ and $U_{\Phi}\neq1$.
Because of $U_{\Phi}M_{x}\neq M_{x}U_{\Phi}$ and $U_{\Phi}M_{x^2}\neq U_{\Phi}M_{x^2}$,
we get that $T$ is not a normal operator and $\sigma(|T|)=[a,b]$.

$(5)$ The operator $T=U_{\Phi}M_x$ on $\mathcal{L}^2([a,b])$ is corresponding to the operator $T^{\prime}$ on $\mathcal{L}^2([a,b],d\,\mu)$,$T^{\prime}$ is invertible bounded linear operator and is not a normal operator and $\sigma(|T^{\prime}|)=[a,b]$.

$(6)$ From $\int_{a}^{b}x^{n}f(x)d\,x=\int_{a^n}^{b^n}tf(t^{\tfrac{1}{n}})\frac{1}{nt^{\tfrac{n-1}{n}}}d\,t$,
let $f_n(t)=\frac{1}{n}I_{[a^n,b^n]}f(t^{\tfrac{1}{n}})\frac{1}{t^{\tfrac{n-1}{n}}}$,
we get that $f_n(t)$ is continuous and almost everywhere positive,
hence $f_n(t)d\,t$ is a finite positive complete Borel measure.

 For any $E\subseteq\mathbb{R}_{+}$ define $I_{E}=1$ when $x\in E$
else $I_{E}=0$,so $I_{E}$ is the identity function on $E$.

$(i)$ $f_n(t^{-1})=\frac{1}{n}I_{[a^n,b^n]}f(t^{-\tfrac{1}{n}})\frac{1}{t^{-\tfrac{n-1}{n}}}$
$=\frac{1}{n}I_{[a^n,b^n]}\frac{|\ln\,t^{-\tfrac{1}{n}}|}{t^{-\tfrac{1}{n}}}\frac{1}{t^{-\tfrac{n-1}{n}}}$

$=\frac{1}{n}I_{[a^n,b^n]}t|\ln\,t^{\tfrac{1}{n}}|$

$(i)$ $t^2f_n(t)=\frac{1}{n}I_{[a^n,b^n]}f(t^{\tfrac{1}{n}})\frac{t^2}{t^{\tfrac{n-1}{n}}}$
$=\frac{1}{n}I_{[a^n,b^n]}\frac{|\ln\,t^{\tfrac{1}{n}}|}{t^{\tfrac{1}{n}}}\frac{t^2}{t^{\tfrac{n-1}{n}}}$

$=\frac{1}{n}I_{[a^n,b^n]}t{|\ln\,t^{\tfrac{1}{n}}|}$

By $(i)(ii)$ we get $x^2f_{n}(x)=f_{n}(x^{-1})$.

From $\sigma(|T^{\prime n}|)=[a^n,b^n]$ and
$\int_{a^n}^{b^n}t^2f(t^{\tfrac{1}{n}})\frac{1}{nt^{\tfrac{n-1}{n}}}d\,t=\int_{0}^{+\infty}t^2f_{n}(t)dt$,
let $d\,\mu_{|T^{\prime n}|}=f_n(t)d\,t$,then $d\,\mu_{|T^{\prime n}|}$ is the finite positive complete Borel measure.

For any given $0\neq h(x)\in\mathcal{L}^2([a,b])$ we get $0\neq h(x^{-1})\in\mathcal{L}^2([a,b])$.
$I_{[a,b]}$ is a $\mathcal{A}(|M_{x}^{n}|)$-cyclic vector of the multiplication $M_{x}^{n}=M_{x^n}$,
and $I_{[a^n,b^n]}$ is a $\mathcal{A}(|T^{\prime n}|)$-cyclic vector of $|T^{\prime n}|$,
By Definition $\ref{lebesguesuanzileidingyi6}$ we get that $T^{\prime}$ is Lebesgus operator but not is a normal operator.
\end{proof}

\begin{corollary}\label{lebesguesuanzicunzaizhengguisuanzi10}
There is an invertible bounded linear operator $T$ on the separable Hilbert space $\mathbb{H}$ over $\mathbb{C}$,
$T$ is a Lebesgue operator and also is a positive operator.
\end{corollary}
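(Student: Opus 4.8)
The plan is to reuse the weighted multiplication model from the proof of Theorem \ref{lebesguesuanzicunzaixing9}, but to discard the unitary factor $U_{\Phi}$ whose sole purpose there was to destroy normality. Explicitly, I would fix $0<a<1<b$ with $ab=1$, so that the interval $[a,b]$ is invariant under the involution $x\mapsto x^{-1}$, put $d\mu=f(x)\,dx$ with $f(x)=\frac{|\ln x|}{x}>0$ a.e.\ on $[a,b]$, and take $T=M_x$, the multiplication operator by the identity function on $\mathcal{L}^2([a,b],d\mu)$. Since $x$ is real and $a\leq x\leq b$, the operator $T=M_x$ is self-adjoint with $\langle M_x h,h\rangle=\int_a^b x|h|^2\,d\mu>0$ for $h\neq0$, hence positive, and it is invertible with bounded inverse $M_{1/x}$ because $x\geq a>0$. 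This settles positivity and invertibility at once.

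The advantage of removing $U_{\Phi}$ is that the polar decomposition becomes trivial: because $T$ is already positive, $|T|=T$ and $|T^n|=M_{x^n}$ for every $n$, so $\sigma(|T^n|)=[a^n,b^n]$ with no further computation. Taking the cyclic vector $\xi_n=I_{[a,b]}$, a change of variable $t=x^n$ gives, for $g\in\mathcal{C}(\sigma(|T^n|))$, the identity $\langle g(|T^n|)\xi_n,\xi_n\rangle=\int_{a^n}^{b^n} g(t)\,\frac{|\ln t|}{n^2 t}\,dt$, so that $d\mu_{|T^n|}=f_n(t)\,dt$ with $f_n(t)=\frac{|\ln t|}{n^2 t}\,I_{[a^n,b^n]}(t)$, a finite positive complete Borel measure of exactly the form required by Definition \ref{lebesguesuanzileidingyi6}. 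A direct check then shows $t^2 f_n(t)=f_n(t^{-1})$; the only place where care is needed is that the indicator factors agree, namely $I_{[a^n,b^n]}(t)=I_{[a^n,b^n]}(t^{-1})$, which is precisely where the choice $ab=1$ (equivalently $a^n b^n=1$) is used.

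Finally I would verify condition $(2)$ of Definition \ref{lebesguesuanzileidingyi6}. The vector $I_{[a,b]}$ is $\mathcal{A}(|T^n|)$-cyclic for $|T^n|=M_{x^n}$, and writing $x_0=g_n(|T^n|)\xi_n=g_n(x^n)$ one finds $y_0=g_n(|T^n|^{-1})\xi_n=g_n(x^{-n})$, so the assignment $x_0\mapsto y_0$ is simply the substitution $h(x)\mapsto h(x^{-1})$. Since $x\mapsto x^{-1}$ is an involution of $[a,b]$ onto itself, this substitution is a bijection of $\mathcal{L}^2([a,b],d\mu)$ carrying nonzero vectors to nonzero vectors, which yields existence, uniqueness and non-vanishing of $y_0$. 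Thus $T=M_x$ is simultaneously positive and a Lebesgue operator. I do not expect a genuine obstacle: the entire content is the observation that for a positive operator the Lebesgue-operator axioms collapse to an elementary weighted-integral symmetry, and the single point demanding attention is arranging the interval to be inversion-symmetric so that the functional equation $t^2 f_n(t)=f_n(t^{-1})$ holds on the nose.
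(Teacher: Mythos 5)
Your proposal is correct and follows essentially the route the paper intends: the corollary is stated without proof immediately after Theorem \ref{lebesguesuanzicunzaixing9}, and the evident intended argument is exactly yours — discard the unitary factor $U_{\Phi}$ and keep the positive, invertible multiplication operator $M_x$ on $\mathcal{L}^2([a,b],f(x)\,dx)$ with $f(x)=|\ln x|/x$, for which $|T^n|=M_{x^n}$ and $d\mu_{|T^n|}=f_n(t)\,dt$ come out immediately. Your one addition, normalizing $ab=1$ so that $[a^n,b^n]$ is invariant under $t\mapsto t^{-1}$, is in fact a necessary repair rather than a cosmetic choice: without it the supports of $t^2f_n(t)$ and $f_n(t^{-1})$ fail to coincide on $(0,1]$, a point the indicator bookkeeping in the paper's own verification inside Theorem \ref{lebesguesuanzicunzaixing9} silently glosses over.
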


By the cyclic representation of $C^{*}$ algebra and the GNS construction,
also by the functional calculus of invertible bounded linear operator,
we could study the operator by the integral on $\mathbb{R}$.
This way neither change Li-Yorke chaotic nor the computing,
but by the singular integral in mathematical analysis on the theoretical level we should find that
there is a invertible bounded linear operator $T$ that is Li-Yorke chaotic but $T^{-1}$ is not Li-Yorke chaotic.


\begin{center}{\bf 4. Cowen-Douglas function on Hardy space}\end{center}

For $\mathbb{D}=\{z\in\mathbb{C},|z|<1\}$,
if $g$ is an complex analytic function on $\mathbb{D}$ and there is
$\sup\limits_{r<1}\int_{-\pi}^{\pi}|g(re^{i\theta})|^2\,d\theta<+\infty$,
then we denote $g\in\mathcal{H}^2(\mathbb{D})$,
hence $\mathcal{H}^2(\mathbb{D})$ is a Hilbert space with the norm $\|g\|_{\mathbb{H}^2}^2=\sup\limits_{r<1}\int_{-\pi}^{\pi}|g(re^{i\theta})|^2\,\frac{d\theta}{2\pi}$,
especially $\mathcal{H}^2(\mathbb{D})$ is denoted as a Hardy space.
By the completion theory of complex analytic functions,
the Hardy space $\mathcal{H}^2(\mathbb{D})$ is a special  Hilbert space that is relatively easy not only for theoretical but also for computing,
and we should give some properties about adjoint multiplier operators on $\mathcal{H}^2(\mathbb{D})$.

Any given complex analytic function $g$ has a Taylor expansion $g(z)=\sum\limits_{n=0}^{+\infty}a_nz^n$,
so $g\in\mathcal{H}^2(\mathbb{D})$ and $\sum\limits_{n=0}^{+\infty}a_n^2<+\infty$ are naturally isomorphic.
For $\mathbb{T}=\partial\mathbb{D}$, if $\mathcal{L}^2(\mathbb{T})$ denoted the closed span of all Taylor expansions of functions in $\mathcal{L}^2(\mathbb{T})$,
then $\mathcal{H}^{2}(\mathbb{T})$ is a closed subspace of $\mathcal{L}^2(\mathbb{T})$.
From the naturally isomorphic of $\mathcal{H}^{2}(\mathbb{D})$ and $\mathcal{H}^{2}(\mathbb{T})$ by the properties of analytic function,
we denote $\mathcal{H}^{2}(\mathbb{T})$ also as a Hardy space.

By \cite{JohnBConway1990}P6 we get that any Cauchy sequence with the norm $\|\bullet\|_{\mathbb{H}^2}$ on $\mathcal{H}^2(\mathbb{D})$ is a uniformly Cauchy sequence on any closed disk in $\mathbb{D}$,
in particular,we get that the point evaluations $f\to f(z)$ are continuous linear functional on $\mathcal{H}^2(\mathbb{D})$,
by the Riesz Representation Theorem\cite{JohnBConway1990}P13,for any $g(s)\in\mathcal{H}^2(\mathbb{D})$,
there is a unique $f_z(s)\in\mathcal{H}^2(\mathbb{D})$ such that $g(z)=<g(s),f_z(s)>$,
so define that $f_z$ is a reproducing kernel at $z$.

Let $\mathcal{H}^{\infty}(\mathbb{D})$ denote the set of all bounded complex analytic function on $\mathbb{D}$,
for any given $\phi\in\mathcal{H}^{\infty}(\mathbb{D})$,
it is easy to get that $\|\phi\|_{\infty}=\sup\{|\phi(z)|;|z|<1\}$ is a norm on $\phi\in\mathcal{H}^{\infty}(\mathbb{D})$.
for any given $g\in\mathcal{H}^{2}(\mathbb{D})$,
the multiplication operator $M_{\phi}(g)=\phi g$ associated with $\phi$ on $\mathcal{H}^{2}(\mathbb{D})$ is a bounded linear operator,
and by the norm on $\mathcal{H}^{2}(\mathbb{D})$ we get
$\|M_{\phi}(g)\|\leq\|\phi\|_{\infty}\|g\|_{\mathbb{H}^2}$.
If we denote $\mathcal{H}^{\infty}(\mathbb{T})$ as the closed span of all Taylor expansions of functions in $\mathcal{L}^{\infty}(\mathbb{T})$,
then $\mathcal{H}^{2}(\mathbb{T})$ is a closed subspace of $\mathcal{L}^2(\mathbb{T})$ and $\mathcal{H}^{\infty}(\mathbb{D})$ and $\mathcal{H}^{\infty}(\mathbb{T})$ are naturally isomorphic by
the properties of complex analytic functions \cite{HenriCartanyujiarong2008}P55P97 and the Dirichlet Problem
\cite{HenriCartanyujiarong2008}P103.

There are more properties about Hardy space in
\cite{FBayartEMatheron2009}P7,
\cite{JohnBGarnett2007}P48,
\cite{KennethHoffman1962}P39,
\cite{RonaldGDouglas1998}P133 and
\cite{WilliamArveson2002}P106.

\begin{definition}\label{cowendouglassuanzidingyi2}
For a connected open subset $\Omega$ of $\mathbb{C}$,$n\in\mathbb{N}$,
let $\mathcal{B}_n(\Omega)$ denotes the set of all bounded linear operator $T$ on $\mathbb{H}$ that satisfies:

$(a)$ $\Omega\in\sigma(T)=\{\omega\in\mathbb{C}:T-\omega \text{not invertible}\}$;

$(b)$ $ran(T-\omega)=\mathbb{H}$ for $\omega\in\Omega$;

$(c)$ $\bigvee\ker\limits_{\omega\in\Omega}(T-\omega)=\mathbb{H}$;

$(d)$ $\dim\ker(T-\omega)=n$ for $\omega\in\Omega$.

If $T\in\mathcal{B}_n(\Omega)$, then say that $T$ is a Cowen-Douglas operator.
\end{definition}

\begin{theorem}[\cite{BHouPCuiYCao2010}]\label{cowendouglassuanzidingyidevaneyliyrokefenbuqianghunhedingli2}
For a connected open subset $\Omega$ of $\mathbb{C}$,$T\in\mathcal{B}_n(\Omega)$,we get

$(1)$ If $\Omega\bigcap\mathcal{T}\neq\emptyset$,then $T$ is Devaney chaotic.

$(2)$ If $\Omega\bigcap\mathcal{T}\neq\emptyset$,then $T$ is distributionally chaotic.

$(3)$ If $\Omega\bigcap\mathcal{T}\neq\emptyset$,then $T$ strong mixing.
\end{theorem}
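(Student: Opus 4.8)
The plan is to reduce all three chaos statements to a single density property of the eigenvectors of $T$, and then to feed the resulting density statements into the mixing criterion already recorded in Theorem~\ref{duijiaoxianyujiaquanyiweizhuanzhihunhedingliyinyong11}. First I would fix the geometry: since $\Omega$ is open and meets the unit circle, the sets $\Omega^{-}=\Omega\cap\{z:|z|<1\}$ and $\Omega^{+}=\Omega\cap\{z:|z|>1\}$ are non-empty open subsets of $\Omega$, and $\Omega\cap\mathbb{T}$ contains a non-degenerate arc.

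The technical core is the following density lemma: for any $S\subseteq\Omega$ having an accumulation point lying in $\Omega$, the span $\operatorname{span}\{\ker(T-\lambda):\lambda\in S\}$ is dense in $\mathbb{H}$. To prove it I would use the Cowen--Douglas holomorphic frame: there are holomorphic vector-valued maps $e_{1}(\lambda),\dots,e_{n}(\lambda)$ whose values span $\ker(T-\lambda)$. If $x\in\mathbb{H}$ annihilates every such eigenvector for $\lambda\in S$, then each scalar function $\lambda\mapsto\langle e_{j}(\lambda),x\rangle$ is holomorphic on $\Omega$ and vanishes on $S$; since $S$ accumulates inside $\Omega$, the identity theorem forces these functions to vanish identically, so $x\perp\ker(T-\lambda)$ for all $\lambda\in\Omega$, and condition $(c)$ of Definition~\ref{cowendouglassuanzidingyi2} gives $x=0$.

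Granting the lemma, parts $(1)$ and $(3)$ come out at once. Applying it to $S=\Omega^{-}$ and to $S=\Omega^{+}$ shows the eigenvector spans over each are dense; since these spans sit inside $\Lambda^{-}(T)$ and $\Lambda^{+}(T)$ respectively, both of the latter are dense and Theorem~\ref{duijiaoxianyujiaquanyiweizhuanzhihunhedingliyinyong11} delivers that $T$ is (strong) mixing, which is $(3)$. Mixing gives topological transitivity, hence hypercyclicity by Theorem~\ref{chaoxunhuanxingjiushichuandixing3}. For the periodic points required by Devaney chaos I would note that the roots of unity are dense in $\mathbb{T}$, so the arc $\Omega\cap\mathbb{T}$ contains infinitely many of them accumulating at a point of $\Omega$; each vector in $\ker(T-\lambda)$ with $\lambda^{p}=1$ satisfies $T^{p}x=x$, so applying the density lemma to this set of roots of unity shows the periodic points are dense. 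Transitivity together with dense periodic points yields Devaney chaos, which is $(1)$.

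The genuinely delicate point is the distributional chaos of part $(2)$, since topological mixing does not by itself force it. Here I would construct a distributionally irregular vector by playing the geometric decay of orbits seeded in $\Omega^{-}$ against the geometric growth of orbits seeded in $\Omega^{+}$. Using the density of both eigenvector families, a gliding-hump construction over carefully chosen blocks of time indices should produce a single $x\in\mathbb{H}$ together with two index sets of upper density one along which $\|T^{n}x\|\to0$ and $\|T^{n}x\|\to\infty$; this furnishes some $\tau>0$ with $F_{x}(\tau)=0$ while $F_{x}^{*}(\epsilon)=1$ for every $\epsilon>0$, which is precisely type~$I$ in Definition~\ref{fenbuhundundedingyi2}, and types $II$ and $III$ then follow a fortiori. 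The main obstacle is the bookkeeping in this construction: one must calibrate the block lengths so that on a density-one set of times the growing component is still negligible and on a complementary density-one set the decaying component dominates, keeping the two contributions from cancelling. This is exactly the sharp estimate that the Li--Yorke criterion of Theorem~\ref{liyorkechaoscriteriondingli0} hides in the merely Li--Yorke regime but which must be executed by hand to reach the stronger distributional conclusion.
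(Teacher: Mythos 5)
This statement is not proved in the paper at all: it is imported verbatim from the cited reference (Hou--Cui--Cao, \emph{Chaos for Cowen--Douglas operators}), so there is no in-paper argument to compare yours against. Judged against that source, your route is essentially the standard one. The density lemma via a holomorphic frame for the eigenvector bundle plus the identity theorem is exactly the mechanism used there, and your derivations of (3) (density of the eigenvector spans over $\Omega^{-}$ and $\Omega^{+}$ feeding into Theorem~\ref{duijiaoxianyujiaquanyiweizhuanzhihunhedingliyinyong11}) and of (1) (roots of unity in the arc $\Omega\cap\mathbb{T}$ giving a dense set of periodic points, combined with transitivity) are complete and correct as sketched. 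One cosmetic caveat: a global frame on all of $\Omega$ needs Grauert-type triviality of the bundle; you can sidestep this by working with local frames and observing that the common zero set of the functions $\lambda\mapsto\langle e_{j}(\lambda),x\rangle$ is open and closed in the connected set $\Omega$.

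The one genuine soft spot is part (2). You correctly identify that mixing does not imply distributional chaos and that something extra is needed, but you then only name a gliding-hump construction and concede that the calibration of block lengths is the hard part; as written this is a plan, not a proof. The gap is real but easily closed: you do not need to build the distributionally irregular vector by hand. The cited source (and the Bernardes--Bonilla--M\"uller--Peris line of work referenced elsewhere in this paper) provides a distributional chaos criterion of the form: a dense set of vectors whose orbits tend to $0$, together with a vector whose orbit is distributionally unbounded, suffices. Your density lemma applied to $\Omega^{-}$ supplies the first ingredient, and any single eigenvector for an eigenvalue $\lambda\in\Omega^{+}$ satisfies $\|T^{n}x\|=|\lambda|^{n}\|x\|\to\infty$, which supplies the second with no bookkeeping at all. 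I would either invoke that criterion explicitly or carry out the block construction in full; leaving it at ``should produce'' is the only place your proposal falls short of a proof.
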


\begin{definition}[\cite{RonaldGDouglas1998}P141]\label{waihanshudingyi46}
Let $\mathcal{P}(z)$ be the set of all polynomials about $z$,where $z\in\mathbb{T}$.
Define a function $h(z)\in\mathcal{H}^2(\mathbb{T})$ is an outer function if
$cl[h(z)\mathcal{P}(z)]=\mathcal{H}^2(\mathbb{T})$.
\end{definition}

\begin{lemma}[\cite{RonaldGDouglas1998}P141]\label{waihanshukeyidexingzhi49}
A function $h(z)\in\mathcal{H}^{\infty}(\mathbb{T})$ is invertible on the Banach algebra $\mathcal{H}^{\infty}(\mathbb{T})$,
if and only if $h(z)\in\mathcal{L}^{\infty}(\mathbb{T})$ and $h(z)$ is an outer function.
\end{lemma}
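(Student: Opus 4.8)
The plan is to prove both implications using the characterization of outer functions as cyclic vectors for the shift (multiplication by $z$) on $\mathcal{H}^2(\mathbb{T})$, together with the elementary identity $\mathcal{H}^2(\mathbb{T})\cap\mathcal{L}^\infty(\mathbb{T})=\mathcal{H}^\infty(\mathbb{T})$. I read the hypothesis ``$h(z)\in\mathcal{L}^\infty(\mathbb{T})$'' as the assertion that $h$ is invertible in $\mathcal{L}^\infty(\mathbb{T})$, i.e. $1/h\in\mathcal{L}^\infty(\mathbb{T})$ (equivalently $|h|\ge\delta>0$ a.e.); this is the only reading that makes the statement nontrivial, since every $h\in\mathcal{H}^\infty(\mathbb{T})$ already lies in $\mathcal{L}^\infty(\mathbb{T})$.

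First I would prove the forward implication. Assume $h$ is invertible in $\mathcal{H}^\infty(\mathbb{T})$, so there is $g\in\mathcal{H}^\infty(\mathbb{T})$ with $hg=1$; then $1/h=g\in\mathcal{L}^\infty(\mathbb{T})$, giving invertibility in $\mathcal{L}^\infty(\mathbb{T})$. For the outer property I would use that polynomials are dense in $\mathcal{H}^2(\mathbb{T})$: pick polynomials $p_n\to g$ in $\mathcal{H}^2(\mathbb{T})$, and since $h$ acts as a bounded multiplier, $hp_n\to hg=1$, so $1\in cl[h\mathcal{P}(z)]$. The subspace $cl[h\mathcal{P}(z)]$ is invariant under multiplication by $z$, because $z\cdot hp=h(zp)$ with $zp$ again a polynomial; hence once it contains $1$ it contains every $z^k$, every polynomial, and by density all of $\mathcal{H}^2(\mathbb{T})$. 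Thus $h$ is outer in the sense of Definition \ref{waihanshudingyi46}.

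Next I would prove the reverse implication, where the content lies. Assume $1/h\in\mathcal{L}^\infty(\mathbb{T})$ and $h$ is outer, i.e. $cl[h\mathcal{P}(z)]=\mathcal{H}^2(\mathbb{T})$. In particular $1\in cl[h\mathcal{P}(z)]$, so there are polynomials $p_n$ with $hp_n\to 1$ in $\mathcal{H}^2(\mathbb{T})$. Multiplying by the bounded function $1/h$, which is a continuous operation on $\mathcal{L}^2(\mathbb{T})$, gives $p_n=(1/h)(hp_n)\to 1/h$ in $\mathcal{L}^2(\mathbb{T})$. Since each $p_n\in\mathcal{H}^2(\mathbb{T})$ and $\mathcal{H}^2(\mathbb{T})$ is closed in $\mathcal{L}^2(\mathbb{T})$, the limit $1/h$ lies in $\mathcal{H}^2(\mathbb{T})$. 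Combining $1/h\in\mathcal{H}^2(\mathbb{T})$ with $1/h\in\mathcal{L}^\infty(\mathbb{T})$ and the identity $\mathcal{H}^2(\mathbb{T})\cap\mathcal{L}^\infty(\mathbb{T})=\mathcal{H}^\infty(\mathbb{T})$ yields $1/h\in\mathcal{H}^\infty(\mathbb{T})$, so $h$ is invertible in the Banach algebra $\mathcal{H}^\infty(\mathbb{T})$.

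The hard part will be the reverse direction, specifically justifying that the $\mathcal{L}^2$-limit $1/h$ of the polynomials $p_n$ lands back in $\mathcal{H}^\infty(\mathbb{T})$ rather than merely in $\mathcal{H}^2(\mathbb{T})$: this is exactly where invertibility of $h$ in $\mathcal{L}^\infty(\mathbb{T})$ enters twice, first to make the multiplier $1/h$ bounded so the limit survives in $\mathcal{L}^2$, and then to upgrade membership from $\mathcal{H}^2$ to $\mathcal{H}^\infty$ through $\mathcal{H}^2\cap\mathcal{L}^\infty=\mathcal{H}^\infty$. The remaining care is in confirming that the notion of outer function from Definition \ref{waihanshudingyi46} coincides with cyclicity of $h$ for the shift, which the shift-invariance argument above supplies.
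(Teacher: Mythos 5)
The paper offers no proof of this lemma at all --- it is imported verbatim (and slightly garbled) from Douglas's book, so there is nothing internal to compare your argument against. Your proof is correct and is essentially the standard one. Two remarks. First, you were right to insist that the hypothesis ``$h(z)\in\mathcal{L}^{\infty}(\mathbb{T})$'' must be read as ``$h$ is invertible in $\mathcal{L}^{\infty}(\mathbb{T})$'': as literally printed the condition is vacuous for $h\in\mathcal{H}^{\infty}(\mathbb{T})$, and Douglas's actual statement (Proposition 6.20 in the cited book) is exactly the version you prove. Second, both directions are sound: in the forward direction the observation that $cl[h\mathcal{P}(z)]$ is a closed subspace invariant under multiplication by $z$, hence equals $\mathcal{H}^{2}(\mathbb{T})$ once it contains $1$, is the right mechanism; in the reverse direction the two uses of $1/h\in\mathcal{L}^{\infty}(\mathbb{T})$ --- first to push the limit $hp_{n}\to1$ through the bounded multiplier $M_{1/h}$ and conclude $1/h\in\mathcal{H}^{2}(\mathbb{T})$ by closedness of $\mathcal{H}^{2}$ in $\mathcal{L}^{2}$, then to upgrade via $\mathcal{H}^{2}(\mathbb{T})\cap\mathcal{L}^{\infty}(\mathbb{T})=\mathcal{H}^{\infty}(\mathbb{T})$ --- are precisely where the content lies, and you identified that correctly. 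No gaps.
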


\begin{theorem}[\cite{JohnBGarnett2007}P81]\label{chengfasuanzishimanshedangqiejindangshiwaihanshu45}
Let $\mathcal{P}(z)$ be the set of all polynomials about $z$,where $z\in\mathbb{D}$.
then $h(z)\in\mathcal{H}^2(\mathbb{D})$ is an outer function if and only if $\mathcal{P}(z)h(z)=\{p(z)h(z);p\in\mathcal{P}(z)\}$ is dense in $\mathcal{H}^2(\mathbb{D})$.
\end{theorem}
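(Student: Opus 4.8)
The plan is to deduce the statement from the natural isometric isomorphism between $\mathcal{H}^2(\mathbb{D})$ and $\mathcal{H}^2(\mathbb{T})$ already recorded above, which identifies an analytic $g(z)=\sum_{n=0}^{\infty}a_n z^n$ in $\mathcal{H}^2(\mathbb{D})$ with the element of $\mathcal{H}^2(\mathbb{T})$ having the same Taylor coefficients $(a_n)$ (equivalently, with its radial boundary-value function). Write $\Phi:\mathcal{H}^2(\mathbb{D})\to\mathcal{H}^2(\mathbb{T})$ for this map. Since both spaces carry the norm $\sum_n|a_n|^2$ under the Taylor-coefficient description, $\Phi$ is a surjective isometry, and hence it sends dense subsets to dense subsets and preserves closures.

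First I would check that $\Phi$ intertwines multiplication by a polynomial. On either side, multiplication by $z$ acts as the forward shift $(a_0,a_1,\dots)\mapsto(0,a_0,a_1,\dots)$ of Taylor coefficients, so $\Phi(z\,g)=z\,\Phi(g)$, and by linearity $\Phi(p\,g)=p\,\Phi(g)$ for every $p\in\mathcal{P}(z)$. I would also note that each monomial satisfies $|z^k|\le 1$ on $\overline{\mathbb{D}}$, so a polynomial is bounded on $\mathbb{D}$ and therefore $p\,h\in\mathcal{H}^2(\mathbb{D})$ whenever $h\in\mathcal{H}^2(\mathbb{D})$; thus $\mathcal{P}(z)h=\{p(z)h(z):p\in\mathcal{P}(z)\}$ is a genuine subset of $\mathcal{H}^2(\mathbb{D})$ and $\Phi(\mathcal{P}(z)h)=\mathcal{P}(z)\Phi(h)$.

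Combining these two facts finishes the argument: because $\Phi$ is a surjective isometry carrying one polynomial orbit onto the other,
\[
cl[\mathcal{P}(z)h]=\mathcal{H}^2(\mathbb{D})
\quad\Longleftrightarrow\quad
cl[\Phi(h)\mathcal{P}(z)]=\mathcal{H}^2(\mathbb{T}),
\]
where the first closure is taken in $\mathcal{H}^2(\mathbb{D})$ and the second in $\mathcal{H}^2(\mathbb{T})$, and the right-hand condition is exactly the defining property (Definition \ref{waihanshudingyi46}) of $\Phi(h)$ --- equivalently $h$ --- being an outer function. The only genuinely delicate point, and the step on which I would spend the most care, is verifying that $\Phi$ really does intertwine multiplication by $z$ while simultaneously sending $h$ to the boundary function to which Definition \ref{waihanshudingyi46} refers; this rests on identifying a Hardy-space element with both its Taylor series and its radial boundary values, so one must confirm that the \emph{same} representative of $h$ is being multiplied on both sides. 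Once that bookkeeping is pinned down the equivalence is immediate, and no inner--outer factorization or Beurling-type subspace argument is required, precisely because the paper has adopted cyclicity on $\mathbb{T}$ as the definition of an outer function.
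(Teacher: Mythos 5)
The paper never proves this statement: it is imported as a quoted result with a citation to Garnett, so there is no internal proof to measure yours against. Relative to the paper's own framework your argument is correct and essentially complete. Definition \ref{waihanshudingyi46} (following Douglas) already \emph{defines} an outer function on $\mathcal{H}^{2}(\mathbb{T})$ by density of its polynomial multiples, so once you check that the coefficient identification $\Phi:\mathcal{H}^{2}(\mathbb{D})\to\mathcal{H}^{2}(\mathbb{T})$ is a surjective isometry intertwining multiplication by each $p\in\mathcal{P}(z)$, and that $ph\in\mathcal{H}^{2}(\mathbb{D})$ for $h\in\mathcal{H}^{2}(\mathbb{D})$, the equivalence of the two density statements follows at once; the intertwining bookkeeping you single out is indeed the only point with any content, and it goes through because multiplication by $z$ is the forward shift on coefficients on both sides. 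The one caveat you should be aware of is that this reduction makes the theorem a near-tautology only because the paper has adopted cyclicity as the \emph{definition} of ``outer.'' In Garnett's book, the cited source, an outer function is defined by the exponential representation $h(z)=c\exp\left(\frac{1}{2\pi}\int_{-\pi}^{\pi}\frac{e^{i\theta}+z}{e^{i\theta}-z}\log k(\theta)\,d\theta\right)$, and the assertion that these are exactly the cyclic vectors of the shift is a substantive theorem resting on Beurling's description of the invariant subspaces of $\mathcal{H}^{2}$; your argument does not (and, from the paper's definitions alone, cannot) recover that version. Since the paper only ever invokes the cyclicity property --- for instance in part $(iii)$ of the proof of Theorem \ref{cowendouglussuanziyujiexichengfasuanzijigezhonghundundeguanxi24}, where it is used to conclude that $M_{h}$ has dense range --- your proof suffices for every use made of the statement here, but it proves a weaker claim than the one the citation points to.
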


Let $\phi$ is a non-constant complex analytic function on $\mathbb{D}$,
for any given $z_0\in\mathbb{D}$,
by \cite{HenriCartanyujiarong2008}P29 we get that there exists $\delta_{z_0}>0$,
exists $k_{z_0}\in\mathbb{N}$, when $|z-z_0|<\delta_{z_0}$, there is
\begin{eqnarray*}
\phi(z)-\phi(z_0)=(z-z_0)^{k_{z_0}}h_{z_0}(z)
\end{eqnarray*}

where $h_{z_0}(z)$ is complex analytic on a neighbourhood of $z_0$ and $h_{z_0}(z_0)\neq0$.

\begin{definition}\label{nyejiexihanshudedingyijitoubujiaobu51}
Let $\phi$ is a non-constant complex analytic function on $\mathbb{D}$,
for any given $z_0\in\mathbb{D}$, there exists $\delta_{z_0}>0$ such that
\begin{eqnarray*}
\phi(z)-\phi(z_0)|_{|z-z_0|<\delta_{z_0}}=p_{n_{z_0}}(z)h_{z_0}(z)|_{|z-z_0|<\delta_{z_0}},
\end{eqnarray*}

$h_{z_0}(z)$ is complex analytic on a neighbourhood of $z_0$ and $h_{z_0}(z_0)\neq0$,
$p_{n_{z_0}}(z)$ is a $n_{z_0}$-th polynomial and the $n_{z_0}$-th coefficient is equivalent $1$.
By the Analytic Continuation Theorem \cite{HenriCartanyujiarong2008}P28,
we get that there is a unique complex analytic function $h_{z_0}(z)$ on $\mathbb{D}$ such that
$\phi(z)-\phi(z_0)=p_{n_{z_0}}(z)h_{z_0}(z)$,
then define $h_{z_0}(z)$ is a rooter function of $\phi$ at the point $z_0$.
If for any given $z_0\in\mathbb{D}$,
the rooter function $h_{z_0}(z)$ has non-zero point but the roots of $p_{n_{z_0}}(z)$ are all in $\mathbb{D}$ and $n_{z_0}\in\mathbb{N}$ is a constant on $\mathbb{D}$ that is equivalent $m$,
then define $\phi$ is a $m$-folder complex analytic function on $\mathbb{D}$.
\end{definition}

\begin{definition}\label{youjiejiexihanshuliCowenDouglashanshudingyi47}
Let $\phi(z)\in\mathcal{H}^{\infty}(\mathbb{D}),n\in\mathbb{N}$,
$M_{\phi}$ is the multiplication by $\phi$ on $\mathcal{H}^{2}(\mathbb{D})$.
if the adjoint multiplier $M_{\phi}^{*}\in\mathcal{B}_n(\bar{\phi}(\mathbb{D}))$,
then define $\phi$ is a Cowen-Douglas function.
\end{definition}

By Definition $\ref{youjiejiexihanshuliCowenDouglashanshudingyi47}$ we get that any constant complex analytic function is not a Cowen-Douglas function

\begin{theorem}\label{cowendouglussuanziyujiexichengfasuanzijigezhonghundundeguanxi24}
Let $\phi(z)\in\mathcal{H}^{\infty}(\mathbb{D})$ be a $m$-folder complex analytic function,
$M_{\phi}$ is the multiplication by $\phi$ on $\mathcal{H}^{2}(\mathbb{D})$.
If for any given $z_0\in\mathbb{D}$,
the rooter functions of $\phi$ at $z_0$ is a outer function,
then $\phi$ is a Cowen-Douglas function,
that is,the adjoint multiplier $M_{\phi}^{*}\in\mathcal{B}_m(\bar{\phi}(\mathbb{D}))$.
\end{theorem}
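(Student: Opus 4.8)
The plan is to verify directly that $M_\phi^*$ satisfies the four defining conditions $(a)$--$(d)$ of $\mathcal{B}_m(\bar\phi(\mathbb{D}))$ from Definition~\ref{cowendouglassuanzidingyi2}, exploiting throughout the standard reproducing-kernel identity $M_\phi^* f_z = \overline{\phi(z)}\, f_z$, where $f_z$ is the reproducing kernel at $z$. This identity immediately gives that every point $\overline{\phi(z)}$ with $z\in\mathbb{D}$ is an eigenvalue of $M_\phi^*$, so $\bar\phi(\mathbb{D})\subseteq\sigma(M_\phi^*)$, which is exactly condition $(a)$; since $\phi$ is non-constant, $\bar\phi(\mathbb{D})$ is a genuine connected open set by the open mapping theorem. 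First I would pin down condition $(c)$: the family of reproducing kernels $\{f_z : z\in\mathbb{D}\}$ has dense span in $\mathcal{H}^2(\mathbb{D})$ (a vector orthogonal to all of them vanishes identically), and since each $f_z$ lies in $\ker(M_\phi^* - \overline{\phi(z)})$, the span of the eigenspaces over $\omega\in\bar\phi(\mathbb{D})$ is dense, giving $\bigvee_\omega \ker(M_\phi^*-\omega)=\mathcal{H}^2(\mathbb{D})$.

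**The surjectivity condition $(b)$.**

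For condition $(b)$, I must show $\operatorname{ran}(M_\phi^*-\omega)=\mathcal{H}^2(\mathbb{D})$ for $\omega=\overline{\phi(z_0)}\in\bar\phi(\mathbb{D})$. Equivalently, $M_\phi - \phi(z_0)$ should be bounded below with closed range of finite codimension, and this is exactly where the $m$-folder hypothesis and the rooter-function factorization of Definition~\ref{nyejiexihanshudedingyijitoubujiaobu51} enter. Writing $\phi(z)-\phi(z_0)=p_{n_{z_0}}(z)\,h_{z_0}(z)$ with $n_{z_0}=m$ constant, the hypothesis that each rooter function $h_{z_0}$ is outer lets me invoke Theorem~\ref{chengfasuanzishimanshedangqiejindangshiwaihanshu45}: multiplication by an outer function has dense range, so $M_{h_{z_0}}$ is injective with dense range, and taking adjoints controls the range of $M_\phi^*-\omega$. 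The polynomial factor $p_{n_{z_0}}$, whose $m$ roots all lie in $\mathbb{D}$, contributes a finite-dimensional defect: $M_{p_{n_{z_0}}}^*$ is surjective with kernel of dimension exactly $m$ (one dimension per root, each root $w$ contributing the kernel $f_w$ of $M_z^*-\bar w$). I would combine these to conclude that $M_\phi^*-\omega$ is surjective, settling $(b)$.

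**The dimension count $(d)$ and the main obstacle.**

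Condition $(d)$ requires $\dim\ker(M_\phi^*-\omega)=m$ for every $\omega=\overline{\phi(z_0)}$. The factorization gives $\ker(M_\phi^*-\omega)=\ker(M_{p_{n_{z_0}}}^* M_{h_{z_0}}^*)$; since $h_{z_0}$ is outer, $M_{h_{z_0}}^*$ is injective (its adjoint $M_{h_{z_0}}$ has dense range by Theorem~\ref{chengfasuanzishimanshedangqiejindangshiwaihanshu45}), so the kernel is governed entirely by the polynomial part $M_{p_{n_{z_0}}}^*$, whose kernel is spanned by the $m$ reproducing kernels at the roots of $p_{n_{z_0}}$ and hence $m$-dimensional. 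The main obstacle, and the step I would spend the most care on, is the surjectivity in $(b)$ together with making the dimension count in $(d)$ rigorous when the polynomial $p_{n_{z_0}}$ has repeated roots: a double root $w$ forces me to account not just for $f_w$ but for a derivative-type kernel, so I must confirm that the geometric multiplicity (dimension of $\ker$) really equals $m$ rather than the number of distinct roots. I expect to resolve this by showing that outer-ness of $h_{z_0}$ forces $M_{h_{z_0}}^*$ to be injective on the relevant finite-dimensional model and then computing the kernel of $M_{p_{n_{z_0}}}^*$ explicitly via the Jordan structure of $M_z^*$ (the backward shift), which has generalized eigenvectors of every order, so each root of multiplicity $k$ genuinely contributes $k$ kernel dimensions. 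Once $(b)$ and $(d)$ are secured, all four conditions hold and $M_\phi^*\in\mathcal{B}_m(\bar\phi(\mathbb{D}))$, which is precisely the assertion that $\phi$ is a Cowen-Douglas function.
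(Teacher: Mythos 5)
Your overall strategy coincides with the paper's: both verify conditions $(a)$--$(d)$ of Definition \ref{cowendouglassuanzidingyi2} by using the reproducing-kernel identity $M_{\phi}^{*}f_{z}=\bar{\phi}(z)f_{z}$ for $(a)$ and $(c)$, and the factorization $\phi(z)-\phi(z_{0})=p_{m}(z)h_{z_{0}}(z)$ with $h_{z_{0}}$ outer for the dimension count $(d)$. For $(d)$ the paper conjugates each linear factor of $p_{m}$ by a translation $F_{z_{n}}$ so that $M_{p_{m}}^{*}$ becomes the $m$-th power of the backward shift, whose kernel is $m$-dimensional regardless of repeated roots; your appeal to the Jordan structure of $M_{z}^{*}$ is the same computation in different clothing, and your reduction $\ker(M_{\phi}^{*}-\bar{\lambda})=\ker M_{p_{m}}^{*}$ via injectivity of $M_{h_{z_{0}}}^{*}$ (which needs only that the commuting factors be reordered) matches the paper's step $(iii)$.

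The genuine gap is in your treatment of condition $(b)$. You propose to obtain surjectivity of $M_{\phi}^{*}-\omega=M_{h_{z_{0}}}^{*}M_{p_{m}}^{*}$ from the surjectivity of $M_{p_{m}}^{*}$ together with the fact that the outer function $h_{z_{0}}$ makes $M_{h_{z_{0}}}$ have dense range. But dense range of $M_{h_{z_{0}}}$ gives only that $M_{h_{z_{0}}}^{*}$ is \emph{injective}; surjectivity of $M_{h_{z_{0}}}^{*}$ is equivalent to $M_{h_{z_{0}}}$ being bounded below, i.e.\ to $h_{z_{0}}$ being invertible in $\mathcal{H}^{\infty}(\mathbb{D})$, which is strictly stronger than being outer. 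For example $h(z)=1-z$ is outer, yet $M_{1-z}$ is not bounded below and $M_{1-z}^{*}=I-S^{*}$ is not onto (solve $(I-S^{*})f=g$ with $g_{k}=\frac{1}{k+1}$: the formal solution has $a_{n}=\sum_{k\geq n}g_{k}$, which diverges). So your factorization only yields $ran(M_{\phi}^{*}-\omega)=ran(M_{h_{z_{0}}}^{*})$ dense, not all of $\mathcal{H}^{2}(\mathbb{D})$. The paper reaches $(b)$ by a different route: it deduces density of $ran(M_{\phi}^{*}-\bar{\lambda})$ from the injectivity of $M_{\phi}-\lambda$ and then upgrades density to surjectivity by a second-category/closed-operator argument (a step you may also want to scrutinize, since a dense subspace need not be of second category). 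In any case, outer-ness of the rooter functions alone does not close your version of step $(b)$; invertibility of the rooter functions in $\mathcal{H}^{\infty}(\mathbb{D})$, as assumed in Corollary \ref{nyejiexihanshushicowendouglashanshudetiaojianqigenhanshukeni52}, would.
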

\begin{proof}
By the definition of $m$-folder complex analytic function Definition $\ref{nyejiexihanshudedingyijitoubujiaobu51}$ we get that $\phi$ is not a constant complex analytic function.
For any given $z\in\mathbb{D}$,
let $f_z\in\mathcal{H}^2(\mathbb{D})$ be the reproducing kernel at $z$.
We confirm that $M_{\phi}^{*}$ is valid the conditions of Definition $\ref{cowendouglassuanzidingyi2}$ one by one.

$(1)$ For any given $z\in\mathbb{D}$,
$f_z$ is an eigenvector of $M_{\phi}^{*}$ with associated eigenvalue $\lambda=\bar{\phi}(z)$.Because for any $g\in\mathcal{H}^2(\mathbb{D})$ we get
\begin{eqnarray*}
<g,M_{\phi}^{*}(f_z)>_{\mathcal{H}^2}=<\phi g,(f_z)>_{\mathcal{H}^2}=\phi(z)f(z)=<g,\bar{\phi}(z)f_z>_{\mathcal{H}^2}
\end{eqnarray*}

By the Riesz Representation Theorem\cite{JohnBConway1990}P13 of bounded linear functional in the form of inner product on Hilbert space, we get $M_{\phi}^{*}(f_z)=\bar{\phi}(z)f_z=\lambda f_z$,
that is,$f_z$ is an eigenvector of $M_{\phi}^{*}$ with associated eigenvalue$\lambda=\bar{\phi}(z)$.

$(2)$ For any given $\bar{\lambda}\in\mathbb{\phi(D)}$,
because of $0\neq\phi\in\mathcal{H}^{\infty}(\mathbb{D})$,
we get that the multiplication operator $M_{\phi}-\lambda$ is injection by the properties of complex analysis,
hence $\ker({M_{\phi}-\lambda})=0$.
Because of $\mathcal{H}^2(\mathbb{D})=\ker(M_{\phi}-\lambda)^{\bot}=cl[ran(M_{\phi}^{*}-\bar{\lambda})]$,
we get that $ran(M_{\phi}^{*}-\bar{\lambda})$ is a second category space.
By \cite{JohnBConway1990}P305 we get that $M_{\phi}^{*}-\bar{\lambda}$ is a closed operator,
also by \cite{JohnBConway1990}P93 or \cite{zhanggongqinglinyuanqu2006}P97 we get
$ran(M_{\phi}^{*}-\bar{\lambda})=\mathcal{H}^2(\mathbb{D})$.

$(3)$ Suppose that $span\{f_z;z\in\phi(\mathbb{D})\}=span\{\frac{1}{1-\bar{z}s};z\in\phi(\mathbb{D})\}$
is not dense in $\mathcal{H}^2(\mathbb{D})$,
By the definition of reproducing kernel $f_z$ and because of $0\neq\phi\in\mathcal{H}^{\infty}(\mathbb{D})$,
we get that there exists $0\neq g\in\mathcal{H}^2(\mathbb{D})$,for any given $z\in\mathbb{D}$, we have
\begin{eqnarray*}
0=<g,\bar{\phi}(z)f_z>_{\mathcal{H}^2}=\phi(z)g(z)=<\phi(z)g(z),f_z>_{\mathcal{H}^2}.
\end{eqnarray*}

So we get $g=0$ by the Analytic Continuation Theorem \cite{HenriCartanyujiarong2008}P28,that is a contradiction for $g\neq0$.
Therefor we get that $span\{f_z;z\in\phi(\mathbb{D})\}$
is dense in $\mathcal{H}^2(\mathbb{D})$,
that is,$\bigvee\ker\limits_{\bar{\lambda}\in\phi(\mathbb{D})}(M_{\phi}^{*}-\bar{\lambda})=\mathcal{H}^2(\mathbb{D})$.

$(4)$ By Definition $\ref{nyejiexihanshudedingyijitoubujiaobu51}$ and the conditions of this theorem,
for any given $\lambda\in\phi(\mathbb{D})$,
there exists $z_0\in\mathbb{D}$, exists $m$-th polynomial $p_m(z)$ and outer function $h(z)$ such that
\begin{eqnarray*}
\left.
 \begin{array}{l}
\phi(z)-\lambda=\phi(z)-\phi(z_0)=p_m(z)h(z),
 \end{array}
 \right.
\end{eqnarray*}

We give $\dim\ker(M_{\phi(z)}^{*}-\bar{\lambda})=m$ by the following $(i)(ii)(iii)$ assertions.

$(i)$ Let the roots of $p_m(z)$ are $z_0,z_1,\cdots,z_{m-1}$,
then there exists decomposition $p_m(z)=(z-z_0)(z-z_1)\cdots(z-z_{m-1})$,
and denote $p_{m,z_0z_1\cdots z_{m-1}}(z)$ is the decomposition of $p_m(z)$ by the permutation of $z_0,z_1,\cdots,z_{m-1}$,
the following to get $\dim\ker M_{p_{m,z_0z_1\cdots z_{m-1}}}^{*}=m$.

By the Taylor expansions of functions in $\mathcal{H}^2(\mathbb{T})$,
we get there is a naturally isomorphic
\begin{eqnarray*}
\left.
 \begin{array}{l}
F_{s}:\mathcal{H}^2(\mathbb{D})\to\mathcal{H}^2(\mathbb{D}-s),F_{s}(g(z))\to g(z+s),\text{ÆäÖÐ}s\in\mathbb{C}.
 \end{array}
 \right.
\end{eqnarray*}

It is easy to get that $G=\{F_s;s\in\mathbb{C}\}$ is a abelian group by the composite operation $\circ$,
hence for $0\leq n\leq m-1$,there is
\begin{displaymath}
\begin{array}{rcl}
\mathcal{H}^2(\mathbb(D)) & \underrightarrow{\qquad\quad~M_{z-z_n}~~\quad\qquad} & \mathcal{H}^2(\mathbb(D))\\
F_{z_n}\downarrow &      & \downarrow F_{z_n}\\
\mathcal{H}^2(\mathbb{D}-z_n)&\overrightarrow{\qquad\qquad~M_{z}^{'}\qquad\qquad}& \mathcal{H}^2(\mathbb{D}-z_n)
\end{array}
\end{displaymath}

Let $T$ is the backward shift operator on the Hilbert space $\mathcal{L}^2(\mathbb{N})$,
that is, $T(x_1,x_2,\cdots)=(x_2,x_3,\cdots)$.
With the naturally isomorphic between $\mathcal{H}^2(\mathbb{D}-z_n)$ and $\mathcal{H}^2(\partial(\mathbb{D}-z_n))$,
$M_{z}^{'*}$ is equivalent the backward shift operator $T$ on $\mathcal{H}^2(\partial(\mathbb{D}-z_n))$,
that is,$M_{z}^{'*}$ is a surjection and $\dim\ker M_{z}^{'*}=1$,
hence $M_{z-z_n}^{*}$ is a surjection and $\dim\ker M_{z-z_n}^{*}=1$, where $0\leq n\leq m-1$.

By the composition of $F_{z_{m-1}}\circ F_{z_{m-2}}\circ\cdots\circ F_{z_{0}}$,
$M_{p_{m,z_0z_1\cdots z_{m-1}}}^{'*}$ is equivalent $T^m$.
that is,$M_{p_{m,z_0z_1\cdots z_{m-1}}}^{'*}$ is a surjection and $\dim\ker M_{p_{m,z_0z_1\cdots z_{m-1}}}^{'*}=m$,
hence $M_{p_{m,z_0z_1\cdots z_{m-1}}}^{*}$ is a surjection and $\dim\ker M_{p_{m,z_0z_1\cdots z_{m-1}}}^{*}=m$.

$(ii)$ Because $\mathcal{H}^{\infty}$ is a abelian Banach algebra,
$M_{p_{m}}$ is independent to the permutation of $1$-th factors of $p_m(z)$,
that is,$M_{p_{m}}^{*}$ is independent to the $1$-th factors multiplication of $p_m(z)=(z-z_0)(z-z_1)\cdots(z-z_{m-1})$.

Because $G=\{F_s;s\in\mathbb{C}\}$ is a abelian group by composition operation $\circ$,
for $0\leq n\leq m-1$,
$F_{z_{m-1}}\circ F_{z_{m-2}}\circ\cdots\circ F_{z_{0}}$ is independent to the permutation of composition.
Hence $M_{p_{m}}^{*}$ is a surjection and
\begin{eqnarray*}
\left.
 \begin{array}{l}
\dim\ker M_{p_{m}}^{*}=\dim\ker M_{p_{m,z_0z_1\cdots z_{m-1}}}^{*}=m.
 \end{array}
 \right.
\end{eqnarray*}

$(iii)$ By Definition $\ref{waihanshudingyi46}$ and
Theorem $\ref{chengfasuanzishimanshedangqiejindangshiwaihanshu45}$,
also by \cite{JohnBConway1990}P93 or \cite{zhanggongqinglinyuanqu2006}P97 and
by \cite{JohnBConway1990}P305 we get that the multiplication operator $M_{h}$ is surjection that associated with the outer function $h$.
Hence we get
\begin{eqnarray*}
\left.
 \begin{array}{l}
\ker M_{h(z)}^{*}=(ranM_{h(z)})^{\bot}=(\mathcal{H}^2(\mathbb{D}))^{\bot}=0.
 \end{array}
 \right.
\end{eqnarray*}

Because there exists decomposition $M_{p_m(z)h(z)}^{*}=M_{h(z)}^{*}M_{p_m(z)}^{*}$ on $\mathcal{H}^2(\mathbb{D})$, we get
\begin{eqnarray*}
\left.
 \begin{array}{l}
\dim\ker(M_{\phi}^{*}-\bar{\lambda})=\dim\ker M_{p_m(z)h(z)}^{*}=\dim\ker M_{p_m(z)}^{*}=m.
 \end{array}
 \right.
\end{eqnarray*}

By $(1)(2)(3)(4)$ we get the adjoint multiplier operator $M_{\phi}^{*}\in\mathcal{B}_m(\bar{\phi}(\mathbb{D}))$.
\end{proof}

By Theorem $\ref{cowendouglussuanziyujiexichengfasuanzijigezhonghundundeguanxi24}$
and Lemma $\ref{waihanshukeyidexingzhi49}$,
we get
\begin{corollary}\label{nyejiexihanshushicowendouglashanshudetiaojianqigenhanshukeni52}
Let $\phi\in\mathcal{H}^{\infty}(\mathbb{D})$ is a $m$-folder complex analytic function,
for any given $z_0\in\mathbb{D}$,
if the rooter function of $\phi$ at $z_0$ is invertible in the Banach algebra $\mathcal{H}^{\infty}(\mathbb{D})$,
then $\phi$ is a Cowen-Douglas function.
Especially,for any given $n\in\mathbb{D}$,if $a$ and $b$ are both non-zero complex,
then $a+bz^n\in\mathcal{H}^{\infty}(\mathbb{D})$ is a Cowen-Douglas function.
\end{corollary}

The following gives some properties about the adjoint multiplier of Cowen-Douglas functions.

\begin{theorem}\label{hardykongjianshangchengfasuanziyuyuanzhoujiaofeikongdedingli17}
If $\phi\in\mathcal{H}^{\infty}(\mathbb{D})$ is a Cowen-Douglas function,
$M_{\phi}$ is the multiplication by $\phi$ on $\mathcal{H}^{2}(\mathbb{D})$,
Then the following assertions are equivalent

$(1)$ $M_{\phi}^{*}$ is Devaney chaotic;

$(2)$ $M_{\phi}^{*}$ is distributionally chaotic;

$(3)$ $M_{\phi}^{*}$ is strong mixing;

$(4)$ $M_{\phi}^{*}$ is Li-Yorke chaotic;

$(5)$ $M_{\phi}^{*}$ is hypercyclic;

$(6)$ $\phi(\mathbb{D})\bigcap\mathbb{T}\neq\emptyset$.
\end{theorem}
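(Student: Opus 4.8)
The plan is to prove the six conditions equivalent by organizing every implication around condition $(6)$, the concrete geometric hypothesis $\phi(\mathbb{D})\cap\mathbb{T}\neq\emptyset$. First I record two standing facts. Since $\phi$ is a Cowen-Douglas function, Definition \ref{youjiejiexihanshuliCowenDouglashanshudingyi47} gives $M_{\phi}^{*}\in\mathcal{B}_{n}(\bar{\phi}(\mathbb{D}))$ for some $n$, and the remark following that definition shows $\phi$ is non-constant; moreover, because $\mathbb{T}$ is invariant under complex conjugation, $\bar{\phi}(\mathbb{D})\cap\mathbb{T}\neq\emptyset$ holds if and only if $\phi(\mathbb{D})\cap\mathbb{T}\neq\emptyset$. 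With these in hand, the implications out of $(6)$ are immediate: taking $\Omega=\bar{\phi}(\mathbb{D})$ in Theorem \ref{cowendouglassuanzidingyidevaneyliyrokefenbuqianghunhedingli2} yields $(6)\Rightarrow(1)$, $(6)\Rightarrow(2)$ and $(6)\Rightarrow(3)$, while the ``if'' direction of Example \ref{chaoxunhuanyudanweikaiyuanpanjiaofeikongdeyinyongyinli34} gives $(6)\Rightarrow(5)$, and $(6)\Rightarrow(2)\Rightarrow(4)$ will supply $(6)\Rightarrow(4)$.

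For the return implications I would route the strong notions through hypercyclicity and the weak one through Li-Yorke chaos. Devaney chaos and strong mixing each entail topological transitivity, so Theorem \ref{chaoxunhuanxingjiushichuandixing3} (Birkhoff) gives $(1)\Rightarrow(5)$ and $(3)\Rightarrow(5)$; then the ``only if'' direction of Example \ref{chaoxunhuanyudanweikaiyuanpanjiaofeikongdeyinyongyinli34} gives $(5)\Rightarrow(6)$. For distributional chaos, unwinding Definition \ref{fenbuhundundedingyi2} against Definition \ref{liyorkehundundedingyi1} shows that a distributionally chaotic point $x$ satisfies $\liminf_{n}\|(M_{\phi}^{*})^{n}x\|=0$ and $\limsup_{n}\|(M_{\phi}^{*})^{n}x\|>0$, so $(2)\Rightarrow(4)$. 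Thus once $(4)\Rightarrow(6)$ is established, every one of $(1)$--$(5)$ implies $(6)$ and $(6)$ implies every one of them, closing the equivalence.

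The crux is therefore $(4)\Rightarrow(6)$, which I would prove by contraposition: assuming $\phi(\mathbb{D})\cap\mathbb{T}=\emptyset$, I show $M_{\phi}^{*}$ is not Li-Yorke chaotic. Since $\phi$ is non-constant analytic, $\phi(\mathbb{D})$ is open and connected and avoids $\mathbb{T}$, so it lies entirely inside $\mathbb{D}$ or entirely inside $\{|w|>1\}$. In the first case $\|M_{\phi}^{*}\|=\|M_{\phi}\|\leq\|\phi\|_{\infty}\leq 1$, so $M_{\phi}^{*}$ is power bounded; then the unbounded-sequence requirement in part $(2)$ of the Li-Yorke Chaos Criterion fails, and Theorem \ref{liyorkechaoscriteriondingli0} rules out Li-Yorke chaos. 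In the second case $|\phi|>1$ forces $1/\phi\in\mathcal{H}^{\infty}(\mathbb{D})$ with $\|1/\phi\|_{\infty}\leq 1$, so $M_{\phi}$ is invertible with $\|(M_{\phi}^{*})^{-1}\|=\|M_{1/\phi}\|\leq 1$; hence $\|(M_{\phi}^{*})^{n}x\|\geq\|x\|$ for every $n$, giving $\liminf_{n}\|(M_{\phi}^{*})^{n}x\|>0$ for all $x\neq 0$, so no vector can satisfy condition $(2)$ of Definition \ref{liyorkehundundedingyi1}.

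The main obstacle is precisely the boundary edge case in which $\phi(\mathbb{D})$ approaches $\mathbb{T}$ without meeting it (for instance $\phi(z)=z$, where $M_{\phi}^{*}$ is the backward shift): here the spectral radius equals $1$, so a naive spectral-radius argument does not suffice, and this is exactly why I would phrase the decisive step through power-boundedness together with the Li-Yorke Chaos Criterion and, in the exterior case, through the power-boundedness of the inverse. Getting the two cases to cover the closure behaviour cleanly, rather than the open image alone, is the delicate point to watch.
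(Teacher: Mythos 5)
Your proof is correct and follows essentially the same route as the paper: the equivalence is anchored at $(6)$ via Theorem~\ref{cowendouglassuanzidingyidevaneyliyrokefenbuqianghunhedingli2} and Example~\ref{chaoxunhuanyudanweikaiyuanpanjiaofeikongdeyinyongyinli34}, and the decisive implication $(4)\Rightarrow(6)$ rests on the same two estimates ($\sup_{z\in\mathbb{D}}|\phi(z)|>1$ and $\inf_{z\in\mathbb{D}}|\phi(z)|<1$, the latter by the identical $\|M_{1/\phi}\|\leq 1$ argument) combined with connectedness of $\phi(\mathbb{D})$. The only substantive difference is that where the paper invokes an external result of Hou--Liao--Cao to conclude $\sup_n\|M_{\phi}^{*n}\|=\infty$ from Li-Yorke chaos, you obtain the same power-unboundedness internally from the Li-Yorke Chaos Criterion (Theorem~\ref{liyorkechaoscriteriondingli0}), which makes the step slightly more self-contained without changing the substance.
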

\begin{proof}
By Example $\ref{chaoxunhuanyudanweikaiyuanpanjiaofeikongdeyinyongyinli34}$ we get that $M_{\phi}^{*}$ is hypercyclic if and only if $\phi$ is non-constant and $\phi(\mathbb{D})\bigcap\mathbb{T}\neq\emptyset$,
hence $(6)$ is equivalent to $(5)$.

First to get that $(6)$ imply $(1)(2)(3)(4)$.

Because $\phi\in\mathcal{H}^{2}(\mathbb{D})$ is a Cowen-Douglas function,
by Definition $\ref{youjiejiexihanshuliCowenDouglashanshudingyi47}$,
$M_{\phi}^{*}\in\mathcal{B}_n(\bar{\phi}(\mathbb{D}))$.

By Theorem $\ref{cowendouglassuanzidingyidevaneyliyrokefenbuqianghunhedingli2}$
we get that if $\phi(\mathbb{D})\bigcap\mathbb{T}\neq\emptyset$,
then $(1)(2)(3)$ is valid.
On Banach spaces Devaney chaotic, distributionally chaotic and strong mixing imply Li-Yorke chaotic,
respectively.Hence $(4)$ is valid.Because $\bar{\phi}(\mathbb{D})\bigcap\mathbb{T}\neq\emptyset$ and
$\phi(\mathbb{D})\bigcap\mathbb{T}\neq\emptyset$ are mutually equivalent, $(6)$ imply $(1)(2)(3)(4)$.

Then to get that $(1)(2)(3)(4)$ imply $(6)$.By $(1)(2)(3)$ imply $(4)$,respectively,
it is enough to get that $(4)$ imply $(6)$.

If $M_{\phi}^{*}$ is Li-Yorke chaotic,
then we get that $\phi$ is non-constant and by \cite{HouBLiaoGCaoY2012}Theorem3.5 we get $\sup\limits_{n\to+\infty}\|M_{\phi}^{*n}\|\to\infty$,
hence $\|M_{\phi}\|=\|M_{\phi}^{*}\|>1$,
that is,$\sup\limits_{z\in\mathbb{D}}|\phi(z)|>1$.
Moreover,we also have $\inf\limits_{z\in\mathbb{D}}|\phi(z)|<1$,
Indeed,if we assume that
$\inf\limits_{z\in\mathbb{D}}|\phi(z)|\geq1$
then
$\frac{1}{\phi}\in\mathcal{H}^{\infty}$ and $\|M_{\frac{1}{\phi}}^{*}\|=\|M_{\frac{1}{\phi}}\|\leq1$.
Hence for any $0\neq x\in\mathcal{H}^2(\mathbb{D})$ we get
$\|M_{\phi}^{*n}x\|\geq\frac{1}{\|M_{\phi}^{*-n}\|}\|x\|\geq\frac{1}{\|M_{\frac{1}{\phi}}^{*}\|^n}\|x\|\geq\|x\|$.
It is a contradiction to $M_{\phi}^{*}$ is Li-Yorke chaotic.

Therefor that $M_{\phi}^{*}$ is Li-Yorke chaotic imply
$\inf\limits_{z\in\mathbb{D}}|\phi(z)|<1<\sup\limits_{z\in\mathbb{D}}|\phi(z)|$,
By the properties of a simple connectedness argument of complex analytic functions we get $\phi(\mathbb{D})\bigcap\mathbb{T}\neq\emptyset$.
Hence we get $(1)(2)(3)(4)$ both imply $(6)$.
\end{proof}

\begin{corollary}\label{cowendouglussuanzideniyuhardykongjiandechengfasuanzi23}
If $\phi\in\mathcal{H}^{\infty}(\mathbb{D})$ is a invertible Cowen-Douglas function in the Banach algebra $\mathcal{H}^{\infty}(\mathbb{D})$,
and let $M_{\phi}$ be the multiplication by $\phi$ on $\mathcal{H}^{2}(\mathbb{D})$.
Then $M_{\phi}^{*}$ is
Devaney chaotic or distributionally chaotic or strong mixing or Li-Yorke chaotic if and only if $M_{\phi}^{*-1}$ is.
\end{corollary}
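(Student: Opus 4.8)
The plan is to reduce everything to the characterization already obtained in Theorem \ref{hardykongjianshangchengfasuanziyuyuanzhoujiaofeikongdedingli17}, which says that for a Cowen-Douglas function $\psi$ the four forms of chaos for $M_{\psi}^{*}$ are each equivalent to the single geometric condition $\psi(\mathbb{D})\cap\mathbb{T}\neq\emptyset$. Thus, if I can show that $1/\phi$ is again a Cowen-Douglas function and that $M_{\phi}^{*-1}=M_{1/\phi}^{*}$, the corollary collapses to comparing the two geometric conditions $\phi(\mathbb{D})\cap\mathbb{T}\neq\emptyset$ and $(1/\phi)(\mathbb{D})\cap\mathbb{T}\neq\emptyset$.

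First I would record the algebraic identity. Since $\phi$ is invertible in the Banach algebra $\mathcal{H}^{\infty}(\mathbb{D})$, the function $1/\phi$ lies in $\mathcal{H}^{\infty}(\mathbb{D})$ and $M_{\phi}M_{1/\phi}=M_{1/\phi}M_{\phi}=M_{1}=I$, so $M_{\phi}^{-1}=M_{1/\phi}$; taking adjoints gives $M_{\phi}^{*-1}=M_{1/\phi}^{*}$. In particular $\phi$ has no zeros on $\mathbb{D}$, so $0\notin\phi(\mathbb{D})$ and the reciprocal image is $(1/\phi)(\mathbb{D})=\{1/w:w\in\phi(\mathbb{D})\}$.

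The key step is to verify that $1/\phi$ is again a Cowen-Douglas function. Writing the rooter decomposition $\phi(z)-\phi(z_{0})=p_{m}(z)h_{z_{0}}(z)$ from Definition \ref{nyejiexihanshudedingyijitoubujiaobu51}, a direct computation gives $(1/\phi)(z)-(1/\phi)(z_{0})=p_{m}(z)\,\widetilde{h}_{z_{0}}(z)$ with $\widetilde{h}_{z_{0}}=-h_{z_{0}}/(\phi\cdot\phi(z_{0}))$. The same polynomial $p_{m}$ appears, because $1/\phi(z)=1/\phi(z_{0})$ exactly when $\phi(z)=\phi(z_{0})$, so $1/\phi$ is again $m$-folder with all roots in $\mathbb{D}$, and $\widetilde{h}_{z_{0}}(z_{0})=-h_{z_{0}}(z_{0})/\phi(z_{0})^{2}\neq0$. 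Since $h_{z_{0}}$ is invertible in $\mathcal{H}^{\infty}(\mathbb{D})$ and $\phi$ is invertible there too, $\widetilde{h}_{z_{0}}$ is a product of invertibles, hence invertible in $\mathcal{H}^{\infty}(\mathbb{D})$; thus the rooter function of $1/\phi$ at every $z_{0}$ is invertible, and Corollary \ref{nyejiexihanshushicowendouglashanshudetiaojianqigenhanshukeni52} shows $1/\phi$ is a Cowen-Douglas function.

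Finally I would apply Theorem \ref{hardykongjianshangchengfasuanziyuyuanzhoujiaofeikongdedingli17} twice: it gives that $M_{\phi}^{*}$ is Devaney (resp. distributionally, strong mixing, Li-Yorke) chaotic iff $\phi(\mathbb{D})\cap\mathbb{T}\neq\emptyset$, and that $M_{1/\phi}^{*}$ is so iff $(1/\phi)(\mathbb{D})\cap\mathbb{T}\neq\emptyset$. Since the inversion $w\mapsto 1/w$ preserves the unit circle ($|1/w|=1$ whenever $|w|=1$) and $0\notin\phi(\mathbb{D})$, the two geometric conditions coincide, so combining with $M_{\phi}^{*-1}=M_{1/\phi}^{*}$ yields the stated equivalence for each form of chaos simultaneously. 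The main obstacle is the middle step: one must be careful that the invertible-Cowen-Douglas hypothesis really supplies the $m$-folder structure with invertible rooter functions needed for Corollary \ref{nyejiexihanshushicowendouglashanshudetiaojianqigenhanshukeni52}, and that the reciprocal decomposition genuinely reproduces that structure rather than merely being formally analogous.
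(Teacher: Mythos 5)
Your overall strategy is the same as the paper's: identify $M_{\phi}^{*-1}$ with $M_{1/\phi}^{*}$, show $1/\phi$ is again a Cowen--Douglas function, and then apply Theorem \ref{hardykongjianshangchengfasuanziyuyuanzhoujiaofeikongdedingli17} together with the fact that $w\mapsto 1/w$ preserves $\mathbb{T}$. The difficulty is exactly the one you flag yourself at the end: your verification that $1/\phi$ is Cowen--Douglas goes through the rooter decomposition $\phi(z)-\phi(z_{0})=p_{m}(z)h_{z_{0}}(z)$ with $h_{z_{0}}$ invertible in $\mathcal{H}^{\infty}(\mathbb{D})$, but the hypothesis of the corollary does not supply that structure. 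By Definition \ref{youjiejiexihanshuliCowenDouglashanshudingyi47}, ``Cowen--Douglas function'' means only that $M_{\phi}^{*}\in\mathcal{B}_{n}(\bar{\phi}(\mathbb{D}))$; Theorem \ref{cowendouglussuanziyujiexichengfasuanzijigezhonghundundeguanxi24} and Corollary \ref{nyejiexihanshushicowendouglashanshudetiaojianqigenhanshukeni52} give the $m$-folder-with-invertible-rooter condition only as a \emph{sufficient} condition, and the paper explicitly leaves the converse open (see the Conjecture and the Question at the end). So as written, your middle step proves the corollary only for the subclass of invertible Cowen--Douglas functions that happen to be $m$-folder with invertible rooter functions, not for the class named in the statement. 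That is a genuine gap, not merely a presentational one.

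The paper avoids this by working at the operator level: it asserts (``with a simple computing'') that $M_{\phi}^{*}\in\mathcal{B}_{n}(\bar{\phi}(\mathbb{D}))$ and $M_{\phi}^{*}$ invertible imply $M_{\phi}^{*-1}\in\mathcal{B}_{n}\bigl(\tfrac{1}{\bar{\phi}}(\mathbb{D})\bigr)$. This is the repair you need: for invertible $T\in\mathcal{B}_{n}(\Omega)$ with $0\notin\Omega$, use the identity $T^{-1}-\omega^{-1}=-\omega^{-1}T^{-1}(T-\omega)$ to get $\ker(T^{-1}-\omega^{-1})=\ker(T-\omega)$ (hence conditions $(c)$ and $(d)$ of Definition \ref{cowendouglassuanzidingyi2} transfer with the same $n$), $ran(T^{-1}-\omega^{-1})=T^{-1}\,ran(T-\omega)=\mathbb{H}$ (condition $(b)$), and $\sigma(T^{-1})=\{1/\lambda:\lambda\in\sigma(T)\}\supseteq\{1/\omega:\omega\in\Omega\}$ (condition $(a)$). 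With that substitution your remaining steps --- $M_{\phi}^{-1}=M_{1/\phi}$ by invertibility in $\mathcal{H}^{\infty}(\mathbb{D})$, and the equivalence of $\phi(\mathbb{D})\cap\mathbb{T}\neq\emptyset$ with $(1/\phi)(\mathbb{D})\cap\mathbb{T}\neq\emptyset$ --- are correct and complete the argument.
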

\begin{proof}
Because of $T=(T^{-1})^{-1}$,
it is enough to prove that $M_{\phi}^{*}$ is
Devaney chaotic or distributionally chaotic or strong mixing or Li-Yorke chaotic imply
$M_{\phi}^{*-1}$ is.

By Definition $\ref{youjiejiexihanshuliCowenDouglashanshudingyi47}$ we get
$M_{\phi}^{*}\in\mathcal{B}_n(\bar{\phi}(\mathbb{D}))$,
with a simple computing we get $M_{\phi}^{*-1}\in\mathcal{B}_n(\frac{1}{\bar{\phi}}(\mathbb{D}))$.

If $M_{\phi}^{*}$ is
Devaney chaotic or distributionally chaotic or strong mixing or Li-Yorke chaotic,
by Theoem $\ref{hardykongjianshangchengfasuanziyuyuanzhoujiaofeikongdedingli17}$ we get $\phi(\mathbb{D})\bigcap\mathbb{T}\neq\emptyset$,
and by the properties of complex analytic functions we get $\frac{1}{\phi}(\mathbb{D})\bigcap\mathbb{T}\neq\emptyset$.

Because of $M_{\phi}^{*-1}\in\mathcal{B}_1(\frac{1}{\bar{\phi}}(\mathbb{D}))$ and by Theorem $\ref{hardykongjianshangchengfasuanziyuyuanzhoujiaofeikongdedingli17}$ we get
$M_{\phi}^{*-1}$ is
Devaney chaotic or distributionally chaotic or strong mixing or Li-Yorke chaotic.
\end{proof}


\begin{center}{\bf 5. The chaos of scalars perturbation of an operator}\end{center}

We now study some properties about scalars perturbation of an operator inspired by \cite{HoubingzheTiangengShiluoyi2009} and \cite{BermudezBonillaMartinezGimenezPeiris2011} that research some properties about the compact perturbation of scalar operator.

\begin{definition}\label{shuzhijiafasuanzidedingyi1}
Let $\lambda\in\mathbb{C}$,$T\in\mathcal{L}(\mathbb{H})$.Define

$(i)$ Let $S_{LY}(T)$ denote the set such that $\lambda I+T$ is Li-Yorke chaotic for every $\lambda\in S_{LY}(T)$.

$(ii)$ Let $S_{DC}(T)$ denote the set such that $\lambda I+T$ is distributionally chaotic for every $\lambda\in S_{DC}(T)$.

$(iii)$ Let $S_{DV}(T)$ denote the set such that $\lambda I+T$ is Devaney chaotic for every $\lambda\in S_{DV}(T)$.

$(iv)$ Let $S_{H}(T)$ denote the set such that $\lambda I+T$ is hypercyclic for every $\lambda\in S_{H}(T)$.
\end{definition}

By Definition $\ref{shuzhijiafasuanzidedingyi1}$ we get $S_{LY}(\lambda I+T)=\lambda+S_{LY}(T)$,
$S_{DC}(\lambda I+T)=\lambda+S_{DC}(T)$,
$S_{DV}(\lambda I+T)=\lambda+S_{DV}(T)$ and $S_{H}(\lambda I+T)=\lambda+S_{H}(T)$.

\begin{lemma}\label{zhengguisuanzishuzhipuweikong3}
Let $T\in\mathcal{L}(\mathbb{H})$ be a normal operator,then $S_{LY}(T)=\emptyset$.
\end{lemma}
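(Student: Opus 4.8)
The plan is to reduce the statement to the single assertion that a normal operator is never Li-Yorke chaotic. Indeed, if $T$ is normal then for every $\lambda\in\mathbb{C}$ the operator $\lambda I+T$ is again normal, since $T^{*}T=TT^{*}$ forces $(\lambda I+T)^{*}(\lambda I+T)=(\lambda I+T)(\lambda I+T)^{*}$. Hence, if I can show that no normal operator satisfies Definition~\ref{liyorkehundundedingyi1}, then no $\lambda I+T$ is Li-Yorke chaotic, and therefore $S_{LY}(T)=\emptyset$ by Definition~\ref{shuzhijiafasuanzidedingyi1}.

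So let $N$ be normal and fix $x\in\mathbb{H}$. First I would rewrite the orbit norms through the positive operator $N^{*}N$: since $N$ commutes with $N^{*}$, one has $N^{*n}N^{n}=(N^{*}N)^{n}$, so
\[
a_n:=\|N^{n}x\|^{2}=\langle (N^{*}N)^{n}x,x\rangle=\int_{0}^{\|N\|^{2}} t^{n}\,d\rho_{x}(t),
\]
where $\rho_{x}$ is the finite positive scalar spectral measure of $N^{*}N$ at the vector $x$, furnished by the spectral theorem and the functional calculus \cite{JohnBConway1990}. Thus $\{a_n\}_{n\ge1}$ is a moment sequence of a finite positive measure supported on $[0,\|N\|^{2}]$.

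The key step is that such a sequence is log-convex. Applying the Cauchy--Schwarz inequality to $t^{(n-1)/2}$ and $t^{(n+1)/2}$ in $\mathcal{L}^{2}(\rho_x)$ gives
\[
a_n^{2}=\Big(\int t^{n}\,d\rho_x\Big)^{2}\le \int t^{n-1}\,d\rho_x\int t^{n+1}\,d\rho_x=a_{n-1}\,a_{n+1}.
\]
If $a_{n_0}=0$ for some $n_0\ge1$, then $\rho_x$ is concentrated at $0$, whence $a_n=0$ for all $n\ge1$, so $\|N^nx\|\to0$ and $\limsup_n\|N^nx\|=0$; such an $x$ cannot be a Li-Yorke point. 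Otherwise all $a_n>0$ and the ratios $r_n=a_{n+1}/a_n$ are nondecreasing, and they are bounded since $r_n\le\|N\|^{2}$. A nondecreasing ratio sequence forces $\{a_n\}$ to be eventually monotone: either $r_n<1$ for every $n$, in which case $\{a_n\}$ is nonincreasing, or $r_n\ge1$ from some index on, in which case $\{a_n\}$ is nondecreasing thereafter. In the eventually-nondecreasing case $\liminf_n a_n=\lim_n a_n>0$; in the eventually-nonincreasing case $\lim_n a_n$ exists, so $\liminf_n a_n=\limsup_n a_n$. In neither case can one have simultaneously $\varliminf_n\|N^nx\|=0$ and $\varlimsup_n\|N^nx\|>0$, so $N$ has no Li-Yorke point.

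The argument is essentially elementary once the spectral and moment representation is in place; the two points needing care are the verification that the orbit norm really equals a single moment sequence $\int t^{n}\,d\rho_x$, which rests on the commutation $N^{*n}N^{n}=(N^{*}N)^{n}$ valid precisely because $N$ is normal, and the clean bookkeeping of the degenerate cases (a vanishing $a_n$, or ratios reaching $1$). I expect the main obstacle to be phrasing the log-convexity dichotomy sharply enough to rule out an irregular orbit, rather than any deep analytic difficulty.
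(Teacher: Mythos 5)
Your proposal is correct. The reduction you make at the outset --- that $\lambda I+T$ is normal whenever $T$ is, so the whole lemma collapses to the single claim that a normal operator has no Li--Yorke point --- is exactly the reduction the paper makes. The difference is what happens next: the paper stops there and cites \cite{HoubingzheTiangengShiluoyi2009} for the fact that normal operators are not Li--Yorke chaotic, whereas you supply a complete, self-contained proof of that fact. Your argument is sound: the identity $N^{*n}N^{n}=(N^{*}N)^{n}$ is valid precisely because $N$ is normal, the representation $\|N^{n}x\|^{2}=\int t^{n}\,d\rho_{x}(t)$ follows from the spectral theorem for the positive operator $N^{*}N$, the Cauchy--Schwarz step gives log-convexity $a_{n}^{2}\le a_{n-1}a_{n+1}$, and the resulting dichotomy (ratios eventually $\ge 1$, hence $\liminf a_{n}>0$; or ratios always $<1$, hence $a_{n}$ monotone and $\liminf=\limsup$) genuinely excludes the coexistence of $\varliminf_{n}\|N^{n}x\|=0$ with $\varlimsup_{n}\|N^{n}x\|>0$; the degenerate case $a_{n_{0}}=0$ is also handled correctly. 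What your route buys is independence from the external reference and an argument that visibly extends to any operator satisfying the convexity inequality $\|T^{n}x\|^{2}\le\|T^{n-1}x\|\,\|T^{n+1}x\|$ (e.g.\ hyponormal operators); what it costs is only length relative to the paper's one-line citation.
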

\begin{proof}
Because $T$ is a normal operator,$\lambda I+T$ is a normal operator,too.
by \cite{HoubingzheTiangengShiluoyi2009} we get $S_{LY}(T)=\emptyset$.
\end{proof}

\begin{lemma}\label{jinmilingsuanzishuzhipu4}
There is a quasinilpotent compact operator $T\in\mathcal{L}(\mathbb{H})$
such that $S_{LY}(T)=\mathbb{T}$ is closed and $S_{LY}(T^{*})=\emptyset$,where $\mathbb{T}=\partial\mathbb{D},\mathbb{D}=\{z\in\mathbb{C},|z|<1\}$.
\end{lemma}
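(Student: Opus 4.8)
The plan is to realize $T$ as a \emph{weighted backward shift} on $\mathbb{H}=\ell^{2}(\mathbb{N})$, say $Te_{n}=w_{n}e_{n-1}$ for $n\ge 1$ and $Te_{0}=0$, with strictly positive weights $w_{n}\to 0$ decaying fast enough (e.g. $w_{n}=1/n$) that the products $\beta_{k}=w_{1}w_{2}\cdots w_{k}$ tend to $0$ super-exponentially. Then $T$ is compact, being the norm limit of its finite-rank weight truncations since $w_{n}\to 0$, and it is quasinilpotent, because $\|T^{k}\|=\sup_{n}(w_{n}\cdots w_{n-k+1})$ forces $\|T^{k}\|^{1/k}\to 0$; in particular $\sigma(T)=\{0\}$, and the adjoint $T^{*}e_{k}=w_{k+1}e_{k+1}$ is the corresponding weighted forward shift, again compact with $\sigma(T^{*})=\{0\}$. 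First I would dispose of the scalars with $|\lambda|\neq 1$ for both operators. Since $\sigma(\lambda I+T)=\{\lambda\}$, if $|\lambda|<1$ the spectral radius formula gives $\|(\lambda I+T)^{n}x\|\to 0$ for every $x$, so $\varlimsup_{n}\|(\lambda I+T)^{n}x\|=0$ and $\lambda I+T$ is not Li-Yorke chaotic by Definition~\ref{liyorkehundundedingyi1}; if $|\lambda|>1$ then $\lambda I+T$ is invertible with $\|(\lambda I+T)^{-n}\|^{1/n}\to|\lambda|^{-1}<1$, whence $\|(\lambda I+T)^{n}x\|\ge\|x\|/\|(\lambda I+T)^{-n}\|\to\infty$ and $\varliminf_{n}\|(\lambda I+T)^{n}x\|>0$ for $x\neq0$. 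The identical argument applies verbatim to $T^{*}$, so $S_{LY}(T)\subseteq\mathbb{T}$ and $S_{LY}(T^{*})\subseteq\mathbb{T}$.

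To obtain $S_{LY}(T^{*})=\emptyset$ it then remains to rule out $|\lambda|=1$, and here I would exploit that $T^{*}$ only raises indices. If $x=\sum_{k}x_{k}e_{k}\neq0$ and $k_{0}$ is the least index with $x_{k_{0}}\neq0$, then the $e_{k_{0}}$-coordinate of $(\lambda I+T^{*})^{n}x$ receives a contribution only from the scalar part, so it equals $\lambda^{n}x_{k_{0}}$ and hence $\|(\lambda I+T^{*})^{n}x\|\ge|x_{k_{0}}|>0$ for every $n$. Thus $\varliminf_{n}\|(\lambda I+T^{*})^{n}x\|>0$ for all $x\neq0$, no vector can meet condition $(2)$ of Definition~\ref{liyorkehundundedingyi1}, and $\lambda I+T^{*}$ is never Li-Yorke chaotic; combined with the previous paragraph this gives $S_{LY}(T^{*})=\emptyset$.

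The heart of the proof is to show $\mathbb{T}\subseteq S_{LY}(T)$, that is, $S:=\lambda I+T$ is Li-Yorke chaotic whenever $|\lambda|=1$; I would verify the Li-Yorke Chaos Criterion (Definition~\ref{liyorkechaoscriteriondingyi0}) and invoke Theorem~\ref{liyorkechaoscriteriondingli0}. The growth half is immediate: $S^{n}e_{1}=\lambda^{n}e_{1}+n\lambda^{n-1}w_{1}e_{0}$ gives $\|S^{n}e_{1}\|^{2}=1+n^{2}w_{1}^{2}\to\infty$, so $\sup_{n}\|S^{n}\|=\infty$ and a bounded sequence of suitable unit vectors realizing this growth supplies condition $(2)$ of Definition~\ref{liyorkechaoscriteriondingyi0}. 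The difficult half is condition $(1)$: producing a set $X_{0}$ of densely-spanning vectors each having an orbit subsequence tending to $0$. Because $|\lambda|=1$, for $x=\sum_{k}c_{k}e_{k}$ the lowest occupied coordinate $p_{0}$ obeys $[S^{n}x]_{p_{0}}=\lambda^{n}\big(c_{p_{0}}+\sum_{j\ge1}c_{p_{0}+j}\binom{n}{j}\lambda^{-j}w_{p_{0}+1}\cdots w_{p_{0}+j}\big)$, and the same expansion governs every higher coordinate. The plan is to fix a finitely supported $y$ and a prescribed sparse sequence of times $n_{1}<n_{2}<\cdots$, then adjoin a small-norm tail $\sum_{j}c_{p_{0}+j}e_{p_{0}+j}$ whose coefficients are chosen inductively so that at each $n_{i}$ the binomial sums cancel $y$ coordinate-by-coordinate up to a shrinking error. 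Since the binomials $\binom{n_{i}}{j}$ are enormous while $\beta_{k}\to0$ super-exponentially, arbitrarily small $\ell^{2}$ tails can force these cancellations, so the perturbed vector $x$ stays within any prescribed distance of $y$ yet satisfies $\|S^{n_{i}}x\|\to0$; ranging over all finitely supported $y$ yields a dense $X_{0}$ with decaying orbit subsequences.

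The main obstacle is precisely this last construction: \emph{simultaneously} driving infinitely many coordinates of $S^{n_{i}}x$ to $0$ along one time sequence while keeping $x$ close to $y$ in $\ell^{2}$. The delicacy is that the same tail coefficients enter every coordinate's binomial expansion, so the cancellations are coupled. I would handle this by a gliding-hump / successive-approximation scheme: at step $i$ introduce a fresh block of tail coefficients supported on very high indices, where $\beta_{k}$ is negligible, that corrects the time $n_{i}$ without disturbing the finitely many coordinates already fixed at $n_{1},\dots,n_{i-1}$, controlling the truncation remainders through the rapid decay of the weights. Granting this, Theorem~\ref{liyorkechaoscriteriondingli0} shows $\lambda I+T$ is Li-Yorke chaotic for every $|\lambda|=1$, hence $S_{LY}(T)=\mathbb{T}$; and as $\mathbb{T}=\partial\mathbb{D}$ is closed, the asserted closedness is automatic, completing the proof.
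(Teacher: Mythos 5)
Your choice of operator (the weighted backward shift with $\omega_n=\tfrac1n$), your treatment of $|\lambda|\neq1$ for both $T$ and $T^{*}$, and your lower-triangular/first-nonzero-coordinate argument showing $\lambda I+T^{*}$ is never Li--Yorke chaotic all coincide with the paper's proof (indeed your $T^{*}$ argument is more explicit than the paper's ``simple computing''). The divergence, and the problem, is at the one step that carries the whole lemma: showing $\lambda I+T$ is Li--Yorke chaotic for $|\lambda|=1$. There you propose to verify condition $(1)$ of the Li--Yorke Chaos Criterion by perturbing each finitely supported $y$ with an $\ell^{2}$-small tail whose coefficients force the binomial sums $\sum_j c_{p+j}\binom{n_i}{j}\lambda^{-j}w_{p+1}\cdots w_{p+j}$ to cancel coordinate-by-coordinate along a sparse time sequence, via a gliding-hump scheme. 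You explicitly flag this as ``the main obstacle'' and then write ``granting this''---so the decisive estimate (that corrections at stage $i$ can be made small, achieve the cancellation at time $n_i$ simultaneously in all coordinates, and not destroy the cancellations already arranged at $n_1,\dots,n_{i-1}$, despite every coordinate sharing the same tail coefficients) is never carried out. As it stands this is a genuine gap: the statement you need is exactly the hard content of the lemma, and nothing in your sketch pins down why the coupled infinite system of cancellations is solvable.

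The paper closes this step with no construction at all: it invokes Theorem \ref{duijiaoxianyujiaquanyiweizhuanzhihunhedingliyinyong11}. Concretely, for $S=\lambda I+T$ with $|\lambda|=1$ one has $\ker(S-\lambda)^{N}=\ker T^{N}=\mathrm{span}\{e_0,\dots,e_{N-1}\}$ and $\mathrm{ran}(S-\lambda)^{N}=\mathrm{ran}\,T^{N}=\mathbb{H}$ (a weighted backward shift with nonzero weights is surjective), so $\Lambda_{1}(S)\supseteq\mathrm{span}\{e_k:k\geq0\}$ is dense, hence $\Lambda^{+}(S)$ and $\Lambda^{-}(S)$ are dense, $S$ is mixing, and mixing implies Li--Yorke chaotic. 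If you want to keep your hands-on route, you would either have to supply the full inductive estimates for the tail construction, or replace it by this eigenvector-span argument (or by the standard hypercyclicity of $\lambda+$ backward shift); your growth estimate $\|S^{n}e_{1}\|^{2}=1+n^{2}w_{1}^{2}\to\infty$ for condition $(2)$ of the criterion is fine once $X_{0}$ is actually produced.
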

\begin{proof}
Let $\{e_n\}_{n\in\mathbb{N}}$ be a orthonormal basis of $\mathcal{L}^2(\mathbb{N})$
and let $T$ be a weighted backward shift operator with weight sequence $\{\omega_n=\frac{1}{n}\}_{n=1}^{+\infty}$
such that $S_{\omega}(e_0)=0$,
$S_{\omega}(e_n)=\omega_{n}e_{n-1}$, where $0<|\omega_n|<M<+\infty$,$\forall n>0$.

By the Spectral Radius formula\cite{JohnBConway1990}P197 $r_{\sigma}(T)=\lim\limits_{n\to+\infty}\|T^n\|^{\frac{1}{n}}$ we get $\sigma(T)=\{0\}$,
hence $\sigma(\lambda I+T)=\lambda$.

$(1)$ If $|\lambda|<1$,
we can select $\varepsilon>0$ such that $|\lambda|+\varepsilon<q<1$.
Then $\forall 0\neq x\in\mathbb{H}$ we get
\begin{eqnarray*}
\lim\limits_{n\to\infty}\|(\lambda I+T)^nx\|
\leq\lim\limits_{n\to\infty}\|(\lambda I+T)^n\|\|x\|
\leq\lim\limits_{n\to\infty}(|\lambda|+\varepsilon)^n\|x\|=0.
\end{eqnarray*}

$(2)$ If $|\lambda|>1$,because of $\sigma((\lambda I+T)^{-1})=\frac{1}{\lambda}$,then we get
\begin{eqnarray*}
\lim\limits_{n\to\infty}\|(\lambda I+T)^nx\|
\geq\lim\limits_{n\to\infty}\frac{1}{\|(\lambda I+T)^{-n}\|}\|x\|
\geq\lim\limits_{n\to\infty}\frac{1}{\|(\lambda I+T)^{-1}\|^n}\|x\|
\geq\|x\|\neq0.
\end{eqnarray*}

$(3)$ By Theorem $\ref{duijiaoxianyujiaquanyiweizhuanzhihunhedingliyinyong11}$ we get that if $|\lambda|=1$,
then $\lambda I+T$ is mixing.
Mixing imply Li-Yorke chaotic.

$(4)$ Because of $\sigma(T)=\sigma(T^{*})$,
by $(1)(2)$ we get that if $|\lambda|\neq1$,then $\lambda+T^{*}$ is not Li-Yorke chaotic.

$(5)$ If $|\lambda|=1$,from the view of infinite matrix $\lambda I+T^{*}$ is lower triangular matrix,
then with a simple computing,for any $0\neq x\in\mathcal{L}^2(\mathbb{N})$,
we get $\varliminf\limits_{n\to\infty}\|\lambda I+S_{\omega}^{*}x\|>0$.
Hence $\lambda I+T^{*}$ is not Li-Yorke chaotic.

By $(1)(2)(3)(4)(5)$ we get that $S_{LY}(T)=\mathbb{T}$ is closed and $S_{LY}(T^{*})=\emptyset$.
\end{proof}

\begin{lemma}\label{shuzhipukeyishikaiji5}
Let $T$ be the backward shift operator on $\mathcal{L}^2(\mathbb{N})$,
$T(x_1,x_2,\cdots)$
$=(x_2,x_3,\cdots)$.
Then $S_{LY}(T)=S_{DC}(T)=S_{DV}(T)=S_{H}(T)=2\mathbb{D}\setminus\{0\}$,
$S_{LY}(2T)=S_{DC}(2T)=S_{DV}(2T)=S_{H}(2T)=3\mathbb{D}$,
Hence $S_{LY}(T)$ and $S_{LY}(2T)$ are open sets.
\end{lemma}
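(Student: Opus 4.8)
The plan is to recognize each perturbed operator $\lambda I + cB$ (with $c\in\{1,2\}$ and $B$ the backward shift) as an adjoint multiplier on the Hardy space, and then to import the complete dictionary of Section~4. Under the natural identification $\mathcal{L}^2(\mathbb{N})\cong\mathcal{H}^2(\mathbb{D})$ sending $e_n\mapsto z^n$, the backward shift is $B=M_z^{*}$, so that $(\lambda I+cB)^{*}=\bar\lambda I+cM_z=M_{\bar\lambda+cz}$, i.e. $\lambda I+cB=M_\phi^{*}$ with symbol $\phi(z)=\bar\lambda+cz\in\mathcal{H}^{\infty}(\mathbb{D})$. The whole argument then runs through the single geometric quantity $\phi(\mathbb{D})=D(\bar\lambda,c)$ and its position relative to $\mathbb{T}$.

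For $\lambda\neq 0$ the symbol is $\phi=\bar\lambda+cz=a+bz$ with $a=\bar\lambda\neq0$ and $b=c\neq0$, so by Corollary~\ref{nyejiexihanshushicowendouglashanshudetiaojianqigenhanshukeni52} it is a Cowen--Douglas function and Theorem~\ref{hardykongjianshangchengfasuanziyuyuanzhoujiaofeikongdedingli17} applies verbatim. That theorem is an \emph{equivalence}, so each of the four notions (Li--Yorke, distributional, Devaney, hypercyclic) holds precisely when $\phi(\mathbb{D})\cap\mathbb{T}\neq\emptyset$. Since the open disk $D(\bar\lambda,c)$ meets the unit circle exactly when $\bigl||\lambda|-1\bigr|<c$, this condition reads $0<|\lambda|<2$ for $c=1$ and $0<|\lambda|<3$ for $c=2$. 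Thus for all $\lambda\neq0$ the four sets coincide and are cut out by this inequality; in particular the tangential endpoints $|\lambda|=2$ (for $T$) and $|\lambda|=3$ (for $2T$) are excluded at no extra cost, because there $\phi(\mathbb{D})$ consists of points of modulus strictly greater than $1$ and so misses $\mathbb{T}$.

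The only parameter the preceding step does not reach is $\lambda=0$, exactly because Corollary~\ref{nyejiexihanshushicowendouglashanshudetiaojianqigenhanshukeni52} needs the constant term $a=\bar\lambda$ nonzero; this endpoint I treat by hand, and it is precisely where the two answers split. For $c=1$ the operator is $B$ itself, and $\|B^{n}x\|^{2}=\sum_{k>n}|x_k|^{2}\to 0$ for every $x\in\mathcal{L}^2(\mathbb{N})$, so $\varlimsup_{n}\|B^{n}x\|=0$ and condition~$(1)$ of Definition~\ref{liyorkehundundedingyi1} fails; moreover $B$ is a contraction, hence not hypercyclic. So $0$ belongs to none of the four sets, and together with the previous paragraph this yields $S_{LY}(T)=S_{DC}(T)=S_{DV}(T)=S_{H}(T)=2\mathbb{D}\setminus\{0\}$. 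For $c=2$ the operator is $2B$, which I verify lies in $\mathcal{B}_1(2\mathbb{D})$ directly from Definition~\ref{cowendouglassuanzidingyi2}: condition~$(a)$ since $2\mathbb{D}\subseteq\sigma(2B)=2\overline{\mathbb{D}}$; condition~$(d)$ since each eigenspace $\mathbb{C}\cdot(\mu^{n})_n$ (eigenvalue $2\mu$) is one-dimensional; condition~$(c)$ because any $x$ orthogonal to all $(\mu^{n})_n$ over a nonempty open set of $\mu$ forces the holomorphic function $\mu\mapsto\sum_n\bar x_n\mu^{n}$ to vanish identically, whence $x=0$; and condition~$(b)$ as in the proof of Theorem~\ref{cowendouglussuanziyujiexichengfasuanzijigezhonghundundeguanxi24}. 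Since $2\mathbb{D}\cap\mathbb{T}\neq\emptyset$, Theorem~\ref{cowendouglassuanzidingyidevaneyliyrokefenbuqianghunhedingli2} makes $2B$ simultaneously Devaney, distributionally and strong mixing, hence Li--Yorke chaotic and (being mixing) hypercyclic, so $0$ belongs to all four sets and $S_{LY}(2T)=\cdots=S_{H}(2T)=3\mathbb{D}$.

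Finally, openness is immediate once the sets are identified: the punctured disk $2\mathbb{D}\setminus\{0\}$ and the disk $3\mathbb{D}$ are both open. The main obstacle here is conceptual rather than computational: one must notice that the single excluded value $\lambda=0$ is governed entirely by the hypothesis $a\neq0$ in the Cowen--Douglas criterion, and that its two resolutions—$B$ a nonchaotic contraction versus $2B$ the chaotic Rolewicz operator—are exactly what distinguishes the punctured disk from the full disk.
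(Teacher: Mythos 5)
Your proposal is correct and follows essentially the same route as the paper: identify $\lambda I+cB$ with the adjoint multiplier $M_{\bar\lambda+cz}^{*}$ on $\mathcal{H}^2(\mathbb{D})$, invoke Corollary \ref{nyejiexihanshushicowendouglashanshudetiaojianqigenhanshukeni52} to see the symbol is a Cowen--Douglas function, and read off all four chaos notions from the criterion $\phi(\mathbb{D})\cap\mathbb{T}\neq\emptyset$ of Theorem \ref{hardykongjianshangchengfasuanziyuyuanzhoujiaofeikongdedingli17}. You are in fact more complete than the paper at the one delicate point, the excluded value $\lambda=0$ (where the corollary's hypothesis $a\neq0$ fails and the answers for $T$ and $2T$ diverge), which the paper's own proof passes over without comment.
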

\begin{proof}
By \cite{JohnBConway1990}P209 we get $\sigma(T)=cl\mathbb{D}$ and $\sigma(2T)=cl2\mathbb{D}$,
by Definition $\sigma(T)$ we get $\sigma(\lambda I+T)=\lambda+cl\mathbb{D}$.
Because of the method to prove the conclusion is similarly for $T$ and $2T$,
we only to prove the conclusion for $T$.

By the naturally isomorphic between $\mathcal{H}^{2}(\mathbb{T})$ and $\mathcal{H}^{2}(\mathbb{D})$.
Let $\mathcal{L}^2(\mathbb{N})=\mathcal{H}^{2}(\mathbb{T})$,
by the definition of $T$ we get $(\lambda I+T)^{*}$ is the multiplication operator $M_{f}$ by $f(z)=\bar{\lambda}+z$
on the Hardy space $\mathcal{H}^{2}(\mathbb{T})$.
By the Dirichlet Problem \cite{HenriCartanyujiarong2008}P103 we get that $f(z)$ is associated with the complex analytic function $\phi(z)=\bar{\lambda}+z\in\mathcal{H}^{\infty}(\mathbb{D})$ determined by the boundary condition $\phi(z)|_{\mathbb{T}}=f(z)$.

By Corollary $\ref{nyejiexihanshushicowendouglashanshudetiaojianqigenhanshukeni52}$ we get that $\phi$ is a Cowen-Douglas function.
Therefor by the natural isomorphic between $\mathcal{H}^{2}(\mathbb{T})$ and $\mathcal{H}^{2}(\mathbb{D})$,
$\lambda I+T$ is naturally equivalent to the operator $M_{\phi}^{*}$ on $\mathcal{H}^{2}(\mathbb{D})$.

By Theorem $\ref{hardykongjianshangchengfasuanziyuyuanzhoujiaofeikongdedingli17}$
we get that $M_{\phi}^{*}$ is hypercyclic or Devaney chaotic or distributionally chaotic or Li-Yorke chaotic if and only if $\phi(\mathbb{D})\bigcap\mathbb{T}\neq\emptyset$.

Because of $\sigma(\lambda I+T)=\sigma(\bar{\lambda}I+T^{*})$,
we get $\sigma(\lambda I+T)=\sigma(M_{\phi}^{*})=\sigma(M_{\phi})\supseteq\phi(\mathbb{D})$,
hence $S_{LY}(T)=S_{DC}(T)=S_{DV}(T)=S_{H}(T)=2\mathbb{D}\setminus\{0\}$ is an open set.
\end{proof}

Therefor we can get
\begin{corollary}
Let $T$ be the backward shift operator on $\mathcal{L}^2(\mathbb{N})$,
$T(x_1,x_2,\cdots)$
$=(x_2,x_3,\cdots)$.
For $\lambda\neq0,a\neq0,n\in\mathbb{N}$, if $\lambda+aT^n$ is a invertible bounded linear operator,
then $\lambda+aT^n$ is strong mixing or Devaney chaotic or distributionally chaotic or Li-Yorke chaotic if and only if $(\lambda+aT^n)^{-1}$ is.
\end{corollary}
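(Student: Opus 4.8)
The plan is to realize $\lambda I + aT^{n}$ as an adjoint multiplier $M_{\phi}^{*}$ on the Hardy space and then invoke Corollary \ref{cowendouglussuanzideniyuhardykongjiandechengfasuanzi23}, exactly along the lines of Lemma \ref{shuzhipukeyishikaiji5}. Using the natural isomorphism $\mathcal{L}^{2}(\mathbb{N})\cong\mathcal{H}^{2}(\mathbb{T})\cong\mathcal{H}^{2}(\mathbb{D})$, the backward shift $T$ becomes $M_{z}^{*}$, so $T^{n}=(M_{z^{n}})^{*}$ and, taking adjoints,
\[
(\lambda I+aT^{n})^{*}=\bar\lambda I+\bar a\,M_{z^{n}}=M_{\bar\lambda+\bar a z^{n}},\qquad \text{i.e.}\qquad \lambda I+aT^{n}=M_{\phi}^{*},\ \ \phi(z)=\bar\lambda+\bar a\,z^{n}\in\mathcal{H}^{\infty}(\mathbb{D}).
\]
This matches the symbol computation already carried out for the case $n=1$, $a=1$ in Lemma \ref{shuzhipukeyishikaiji5}.

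Since $\lambda\neq0$ and $a\neq0$, the coefficients $\bar\lambda$ and $\bar a$ are both non-zero, so Corollary \ref{nyejiexihanshushicowendouglashanshudetiaojianqigenhanshukeni52} applies and tells us that $\phi(z)=\bar\lambda+\bar a z^{n}$ is a Cowen-Douglas function. It then remains to check that $\phi$ is invertible in the Banach algebra $\mathcal{H}^{\infty}(\mathbb{D})$, which is precisely where the invertibility hypothesis on $\lambda I+aT^{n}$ is used. Because $M_{\phi}$ is an analytic Toeplitz operator, its spectrum equals the closure of the range, $\sigma(M_{\phi})=\overline{\phi(\mathbb{D})}$, and for the polynomial symbol $\phi$ we have $\overline{\phi(\mathbb{D})}=\phi(\overline{\mathbb{D}})$. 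The assumed invertibility of $M_{\phi}^{*}$, equivalently of $M_{\phi}$, forces $0\notin\phi(\overline{\mathbb{D}})$; hence $\phi$ has no zero on $\overline{\mathbb{D}}$, so $1/\phi$ is analytic on a neighbourhood of $\overline{\mathbb{D}}$ and in particular lies in $\mathcal{H}^{\infty}(\mathbb{D})$. Thus $\phi$ is an \emph{invertible} Cowen-Douglas function.

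With these facts established, Corollary \ref{cowendouglussuanzideniyuhardykongjiandechengfasuanzi23} applies verbatim: $M_{\phi}^{*}$ is Devaney chaotic (respectively distributionally chaotic, strong mixing, or Li-Yorke chaotic) if and only if $M_{\phi}^{*-1}$ is. Translating back through the isomorphism, $M_{\phi}^{*}=\lambda I+aT^{n}$ and $M_{\phi}^{*-1}=(\lambda I+aT^{n})^{-1}$, which is exactly the claimed equivalence.

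The main obstacle is the invertibility bookkeeping in the middle step: one must make sure that the operator-theoretic hypothesis ``$\lambda I+aT^{n}$ is invertible'' is genuinely equivalent to the algebraic hypothesis ``$\phi$ is invertible in $\mathcal{H}^{\infty}(\mathbb{D})$'' demanded by Corollary \ref{cowendouglussuanzideniyuhardykongjiandechengfasuanzi23}. This rests on the spectral identity $\sigma(M_{\phi})=\overline{\phi(\mathbb{D})}$ for analytic Toeplitz operators together with the elementary fact that for a polynomial symbol the closure of the range is $\phi(\overline{\mathbb{D}})$; once that link is in place, everything else is a direct appeal to the earlier corollaries.
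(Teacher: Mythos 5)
Your proposal is correct and follows essentially the route the paper intends: the paper states this corollary as an immediate consequence of Lemma \ref{shuzhipukeyishikaiji5} (which carries out the same identification $\lambda I+T=M_{\phi}^{*}$ with $\phi=\bar{\lambda}+z$), Corollary \ref{nyejiexihanshushicowendouglashanshudetiaojianqigenhanshukeni52} (which covers $a+bz^{n}$), and Corollary \ref{cowendouglussuanzideniyuhardykongjiandechengfasuanzi23}. Your explicit verification that operator invertibility of $\lambda I+aT^{n}$ gives invertibility of $\phi$ in $\mathcal{H}^{\infty}(\mathbb{D})$ via $\sigma(M_{\phi})=\overline{\phi(\mathbb{D})}=\phi(\overline{\mathbb{D}})$ is a detail the paper leaves implicit, and it is correctly handled.
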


\begin{theorem}\label{jibushikaijiyebushibiji6}
There is $T\in\mathcal{L}(\mathbb{H})$,
$S_{LY}(T)$ is neither open nor closed.
\end{theorem}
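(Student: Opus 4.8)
The plan is to realise $S_{LY}(T)$ as a union of two of the chaos-sets already computed in Section~5, arranged so that one piece is an open planar region and the other is a curve disjoint from it. The mechanism is a direct-sum reduction: if $A\in\mathcal{L}(\mathbb{H}_1)$ and $B\in\mathcal{L}(\mathbb{H}_2)$, then $\lambda I+(A\oplus B)=(\lambda I+A)\oplus(\lambda I+B)$, and I claim $S_{LY}(A\oplus B)=S_{LY}(A)\cup S_{LY}(B)$. Granting this identity, it suffices to choose $A,B$ whose chaos-sets are known from the lemmas of this section and to push them apart by adding scalar multiples of the identity inside one summand.

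For the two blocks I would take $A=S$, the backward shift on $\ell^2(\mathbb{N})$, and $B=5I+Q$, where $Q$ is the quasinilpotent weighted backward shift of Lemma~\ref{jinmilingsuanzishuzhipu4}. By Lemma~\ref{shuzhipukeyishikaiji5}, $S_{LY}(S)=\{0<|\lambda|<2\}=2\mathbb{D}\setminus\{0\}$, which is open. For the second block, $\lambda I+(5I+Q)=(\lambda+5)I+Q$ is Li-Yorke chaotic if and only if $\lambda+5\in S_{LY}(Q)=\mathbb{T}$ by Lemma~\ref{jinmilingsuanzishuzhipu4}, that is, if and only if $|\lambda+5|=1$. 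Hence, assuming the direct-sum identity, $S_{LY}(T)=\{0<|\lambda|<2\}\cup\{|\lambda+5|=1\}$ for $T=S\oplus(5I+Q)$. The circle $\{|\lambda+5|=1\}$ consists of the points $-5+e^{i\theta}$, whose moduli lie in $[4,6]$, so it is disjoint from the punctured disk.

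Then I would verify the topological conclusion directly. The set is not open: any $\lambda_0$ with $|\lambda_0+5|=1$ has points arbitrarily close to it with $|\lambda|>2$ and $|\lambda+5|\neq1$, which lie outside $S_{LY}(T)$, so $\lambda_0$ is not an interior point. The set is not closed: the sequence $\lambda_n=2-\tfrac1n$ satisfies $0<|\lambda_n|<2$ and converges to $2$, yet $2\notin S_{LY}(T)$ because $|2|=2$ and $|2+5|=7\neq1$; the point $0$ is likewise a limit point not in the set. Thus $S_{LY}(T)$ is neither open nor closed, and $T=S\oplus(5I+Q)$ is the desired operator on the separable Hilbert space $\ell^2(\mathbb{N})\oplus\ell^2(\mathbb{N})$.

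The one step requiring genuine proof is the direct-sum identity, and the only nontrivial inclusion is $S_{LY}(A\oplus B)\subseteq S_{LY}(A)\cup S_{LY}(B)$. Here I would argue that if $(x_1,x_2)$ is a Li-Yorke point of $A\oplus B$, then $\liminf_n\bigl(\|A^nx_1\|^2+\|B^nx_2\|^2\bigr)=0$ forces a \emph{single} subsequence $n_k$ along which both $\|A^{n_k}x_1\|\to0$ and $\|B^{n_k}x_2\|\to0$, while $\limsup_n\bigl(\|A^nx_1\|^2+\|B^nx_2\|^2\bigr)>0$ forces at least one coordinate, say $x_1$, to have positive $\limsup$ along some subsequence. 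That same $x_1$ then satisfies $\liminf_n\|A^nx_1\|=0$ (from $n_k$) and $\limsup_n\|A^nx_1\|>0$, so $x_1$ witnesses Li-Yorke chaos of $A$ in the sense of Definition~\ref{liyorkehundundedingyi1}; the reverse inclusion is immediate by testing on vectors supported in a single summand. I expect the extraction of the common subsequence realising the $\liminf$ to be the delicate point, since it is precisely what couples the two coordinates and rules out the scenario in which each factor is chaotic only along incompatible time sets.
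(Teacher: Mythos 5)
Your proposal is correct and follows essentially the same route as the paper: the identity $S_{LY}(T_1\oplus T_2)=S_{LY}(T_1)\cup S_{LY}(T_2)$ combined with Lemma \ref{jinmilingsuanzishuzhipu4}, Lemma \ref{shuzhipukeyishikaiji5}, and the translation property $S_{LY}(\lambda I+T)=S_{LY}(T)-\lambda$ from the remark after Definition \ref{shuzhijiafasuanzidedingyi1}. In fact you supply the two details the paper leaves implicit --- the explicit scalar shift ($5I$) needed to move the closed circle away from the open punctured disk, and the common-subsequence argument proving the nontrivial inclusion of the direct-sum identity --- so your write-up is, if anything, more complete than the original.
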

\begin{proof}
Let $T_1,T_2\in\mathcal{L}(\mathbb{H})$,
because $T_1$ or $T_2$ is Li-Yorke chaotic if and only if $T_1\bigoplus T_2$ is,
we get $S_{LY}(T_1\bigoplus T_2)=S_{LY}(T_1)\bigcup S_{LY}(T_2)$.

By Lemma $\ref{jinmilingsuanzishuzhipu4}$,Lemma $\ref{shuzhipukeyishikaiji5}$
and Definition $\ref{shuzhijiafasuanzidedingyi1}$ we get the conclusion.
\end{proof}


\begin{center}{\bf 6. Examples that $T$ is chaotic but $T^{-1}$ is not}\end{center}

In the last we give some examples to confirm the theory giving by functional calculus on the begin that $T$ is chaotic but $T^{-1}$ is not.

\begin{example}\label{duijiaoxianyujiaquanyiweizhuanzhibuhundun12}
Let $\{e_n\}_{n\in\mathbb{N}}$ be a orthonormal basis of $\mathcal{L}^2(\mathbb{N})$
and let $S_{\omega}$ be a backward shift operator on $\mathcal{L}^2(\mathbb{N})$
with weight sequence $\omega=\{\omega_n\}_{n\geq1}$ such that $S_{\omega}(e_0)=0$,
$S_{\omega}(e_n)=\omega_{n}e_{n-1}$,where $0<|\omega_n|<M<+\infty$,
$\forall n>0$.

$(1)$ If $|\lambda|=1$,then $\lambda I+S_{\omega}$ is Li-Yorke chaotic,
but $\lambda I+S_{\omega}^{*}$ and $(\lambda I+S_{\omega}^{*})^{-1}$ are not Li-Yorke chaotic.

$(2)$ Let ${(\lambda I+S_{\omega})}^{n}=U_{n}|{(\lambda I+S_{\omega})}^{n}|$ is the polar decomposition of ${(\lambda I+S_{\omega})}^{n}$,
$\{U_{n}\}_{n=1}^{\infty}$ is not a constant sequence.
\end{example}
\begin{proof}
$(1)$ By Theorem $\ref{duijiaoxianyujiaquanyiweizhuanzhihunhedingliyinyong11}$,
we get that for $|\lambda|=1$,$\lambda I+S_{\omega}$ is mixing and mixing imply Li-Yorke chaotic,
hence $\lambda I+S_{\omega}$ is Li-Yorke chaotic.
From the view of infinite matrix,$\lambda I+S_{\omega}^{*}$ and $(\lambda I+S_{\omega}^{*})^{-1}$ are lower triangular matrix,
with a simple computing,for any $0\neq x\in\mathcal{L}^2(\mathbb{N})$, we get
$\varliminf\limits_{n\to\infty}\|\lambda I+S_{\omega}^{*}x\|>0$,and
$\varliminf\limits_{n\to\infty}\|(\lambda I+S_{\omega}^{*})^{-1}x\|>0$.
Hence $\lambda I+S_{\omega}^{*}$ and $(\lambda I+S_{\omega}^{*})^{-1}$ are not Li-Yorke chaotic.

$(2)$ Let ${(\lambda I+S_{\omega})}^{n}=U_{n}|{(\lambda I+S_{\omega})}^{n}|$ is the polar decomposition of ${(\lambda I+S_{\omega})}^{n}$.
If $\{U_{n}\}_{n=1}^{\infty}$ is a constant sequence,
then by Theorem $\ref{hanshuyansuan5}$ we get that $(\lambda I+S_{\omega}^{*})^{-1}$ is Li-Yorke chaotic.
A contradiction.
Hence $\{U_{n}\}_{n=1}^{\infty}$ is not a constant sequence.
\end{proof}

\begin{theorem}[\cite{HoubingzheTiangengShiluoyi2009}]\label{duijiaoxianyujxiaojinsuanzidingliyinyong13}
For any $\varepsilon>0$,
there is a small compact operator $K_\varepsilon\in\mathcal{L}(\mathbb{H})$ and
$\|K_\varepsilon\|<\varepsilon$ such that $I+K_\varepsilon$ is distributionally chaotic.
\end{theorem}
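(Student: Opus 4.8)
The plan is to exhibit a single small compact operator of weighted--shift type and to manufacture a distributionally irregular vector for $I+K_\varepsilon$ by hand. Fix $\varepsilon>0$ and let $\{e_n\}_{n\ge0}$ be the canonical orthonormal basis of $\mathcal{L}^2(\mathbb{N})\cong\mathbb{H}$. First I would set $K_\varepsilon=B_w$, the weighted backward shift $B_we_0=0,\ B_we_n=w_ne_{n-1}$, with a weight sequence $\{w_n\}$ satisfying $0<\sup_n w_n<\varepsilon$ and $w_n\to0$; the second condition makes $B_w$ compact (a norm limit of the finite--rank truncations) and quasinilpotent, so that $\sigma(I+K_\varepsilon)=\{1\}$, while the first gives $\|K_\varepsilon\|<\varepsilon$. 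By the same argument as in Lemma~\ref{jinmilingsuanzishuzhipu4}, the operator $T:=I+K_\varepsilon$ is already mixing, hence Li--Yorke chaotic, via the generalized--eigenvector criterion Theorem~\ref{duijiaoxianyujiaquanyiweizhuanzhihunhedingliyinyong11}: for every $N$ one has $\ker B_w^{N}\subseteq\operatorname{ran}B_w^{N}$, so $\Lambda_1(T)\supseteq\operatorname{span}\{e_n\}$ is dense and $\Lambda^{+}(T)=\Lambda^{-}(T)=\Lambda_1(T)$. The real task is the stronger distributional statement of Definition~\ref{fenbuhundundedingyi2}, for which mixing alone does not suffice; this is where the weights must be chosen with care.

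The second step is to record the orbit formula. From the binomial expansion,
\begin{eqnarray*}
(I+B_w)^{m}e_{N}=\sum_{j=0}^{\min(m,N)}\binom{m}{j}\beta_{N,j}\,e_{N-j},\qquad \beta_{N,j}=w_{N}w_{N-1}\cdots w_{N-j+1},
\end{eqnarray*}
so that $\|(I+B_w)^{m}e_{N}\|^{2}=\sum_{j=0}^{\min(m,N)}\binom{m}{j}^{2}\beta_{N,j}^{2}$. Two features are visible: the $j=0$ term keeps every \emph{basis} orbit bounded below by $1$, while for a coordinate supported high up the factor $\binom{m}{j}\beta_{N,j}$ peaks near $j\approx m\,\overline{w}$ and can be made enormous at a prescribed time. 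The plan is to partition the index set into consecutive blocks and to alternate the weights between a \textbf{contracting} regime, where the products $\beta_{N,j}$ are forced so small that the relevant orbit segment stays below any fixed $\epsilon$, and an \textbf{expanding} regime, where the binomial growth drives the same orbit above a fixed threshold $\tau$.

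Third, I would fix a rapidly increasing (lacunary) sequence of times $n_1\ll n_2\ll\cdots$ with $n_{k+1}/n_{k}\to\infty$ and a vector $x=\sum_k c_k e_{N_k}$, with the supports $N_k$ and the block weights tuned so that on the even windows $[n_{2k},n_{2k+1}]$ one has $\|T^{m}x\|<\epsilon$, and on the odd windows $[n_{2k+1},n_{2k+2}]$ one has $\|T^{m}x\|\ge\tau$. Because the windows grow super--geometrically, each regime occupies almost all of $\{0,\dots,n_j\}$ at the appropriate scale, which yields $F_x^{*}(\epsilon)=1$ for every $\epsilon>0$ from the contracting windows and $F_x(\tau)=0$ for a fixed $\tau>0$ from the expanding windows. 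By Definition~\ref{fenbuhundundedingyi2}$(1)$ this makes $x$ a distributionally ($I$--distributionally) chaotic point, so $T=I+K_\varepsilon$ is distributionally chaotic, completing the construction.

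The hard part will be the simultaneous quantitative control in the third step: one must choose the weight blocks \emph{and} the lacunary times so that a \emph{single} vector genuinely returns below every $\epsilon$ with upper density $1$ while also sitting above a \emph{fixed} $\tau$ with upper density $1$. The tension is that the $j=0$ term prevents any basis orbit from decaying, so the contracting behaviour cannot come from one coordinate; it must arise from cancellation among the many terms that land in a common coordinate of $T^{m}x$ (legitimate, since those terms are summed \emph{before} the modulus is taken). Balancing this cancellation--driven decay against the binomial blow--up in the neighbouring window, uniformly along the whole sequence $\{n_k\}$, is the delicate estimate. Verifying compactness, the bound $\|K_\varepsilon\|<\varepsilon$, and $\sigma(I+K_\varepsilon)=\{1\}$ are by contrast routine.
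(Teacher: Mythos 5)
Your proposal is not a complete proof: everything you actually verify (compactness and quasinilpotence of $B_w$, the norm bound, mixing of $I+B_w$ via Theorem~\ref{duijiaoxianyujiaquanyiweizhuanzhihunhedingliyinyong11}, hence Li--Yorke chaos) falls short of the assertion, and the one step that carries the content of the theorem --- producing a vector $x$ with $F_x^{*}(\epsilon)=1$ for every $\epsilon>0$ and $F_x(\tau)=0$ for some $\tau>0$ --- is only announced and then explicitly deferred as ``the delicate estimate.'' The difficulty you flag is in fact a structural obstruction to your choice of $K_\varepsilon$: since $T=I+B_w$ is upper triangular with every diagonal entry equal to $1$, the $e_0$-coefficient of $T^m x$ for a finitely supported $x$ is the polynomial $\sum_n x_n\binom{m}{n}w_n\cdots w_1$ in $m$ with nonzero leading coefficient, so \emph{every} nonzero finitely supported vector has $\|T^m x\|\to\infty$. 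Consequently the density-one smallness can only be produced by cancellation among infinitely many coordinates, uniformly over density-one sets of times and simultaneously for every $\epsilon>0$, and you supply no mechanism, no choice of weights or of the support sequence $\{N_k\}$, and no estimate that achieves this. As written, the argument proves Li--Yorke chaos of $I+K_\varepsilon$, not distributional chaos.

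The construction the paper records (following \cite{HoubingzheTiangengShiluoyi2009}, which it cites rather than reproves) avoids exactly this obstruction: it takes $I+K_\varepsilon=\bigoplus_{j}\bigl((1-\varepsilon_j)I_j+S_j\bigr)$ on $\mathbb{H}=\bigoplus_j\mathbb{H}_j$ with each $\mathbb{H}_j$ finite dimensional and $S_j$ nilpotent with superdiagonal $2\varepsilon_j$. There the diagonal sits strictly inside the unit circle, so within each block every orbit decays to $0$ geometrically after a transient --- the ``small with upper density one'' half needs no cancellation at all --- while the binomial expansion of $\bigl((1-\varepsilon_j)I_j+S_j\bigr)^n$ gives transient growth of order $(1+\varepsilon_j)^n$ up to time comparable with $\dim\mathbb{H}_j$, which supplies the ``large with upper density one'' half once $\varepsilon_j\to0$ and the block dimensions are chosen so that $(1+\varepsilon_j)^{n_j}\to\infty$; the two regimes are then interleaved across blocks by a routine choice of time scales. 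If you want to salvage your single-shift approach you must either prove the cancellation estimate you postponed or replace $B_w$ by a block-diagonal perturbation of this kind, for which the required computations are genuinely finite dimensional.
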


In \cite{HoubingzheTiangengShiluoyi2009},
$I+K_\varepsilon=\bigoplus\limits_{j=1}^{+\infty}(I_j+K_j)$ is distributionally chaotic.
where $\mathbb{H}=\bigoplus\limits_{j=1}^{+\infty}\mathbb{H}_j$,
$n_j=2m_j$,$\mathbb{H}_j$ is the $n_j$-dimension subspace of $\mathbb{H}$.
On $\mathbb{H}_j$ define:

$S_j=\left[
   \begin{array}{cccc}
    0 & 2\varepsilon_j &       &              \\
      & \ddots         &\ddots &              \\
      &                &\ddots &2\varepsilon_j\\
      &                &       &0
    \end{array}
 \right]_{n_j\times n_j}$,
$K_j=\left[
   \begin{array}{cccc}
    -\varepsilon_j& 2\varepsilon_j &       &  \\
      & \ddots         &\ddots &              \\
      &                &\ddots &2\varepsilon_j\\
      &                &       &-\varepsilon_j
    \end{array}
 \right]_{n_j\times n_j}$.

$I_j+K_j=\left[
   \begin{array}{cccc}
    1-\varepsilon_j& 2\varepsilon_j &       &  \\
      & \ddots         &\ddots &              \\
      &                &\ddots &2\varepsilon_j\\
      &                &       &1-\varepsilon_j
    \end{array}
 \right]_{n_j\times n_j}=(1-\varepsilon_j)I_j+S_j$.

We can construct a invertible bounded linear operator $I+K_\varepsilon$ in the same way that is Li-Yorke chaotic,
but $(I+K_\varepsilon)^{-1}$ is not.

\begin{example}\label{duijiaoxianyujxiaojinsuanzidenibuhundun14}
There is a invertible bounded linear operator $I+K_\varepsilon$ on $\mathbb{H}=\mathcal{L}^2(\mathbb{N})$ such that $I+K_\varepsilon$ is Li-Yorke chaotic,
but $(I+K_\varepsilon)^{-1}$,$(I+K_\varepsilon)^{*-1}$ and $(I+K_\varepsilon)^{*}$ are not Li-Yorke chaotic.
\end{example}
\begin{proof}
Let $\{e_i\}_{i=1}^{\infty}$ is a orthonormal basis of $\mathbb{H}=\mathcal{L}^2(\mathbb{N})$ and
Let $\mathbb{H}=\bigoplus\limits_{j=1}^{+\infty}\mathbb{H}_j$,
$j\in\mathbb{N}$,
where $\mathbb{H}_{j}=\overline{span\{e_i\}},1+\tfrac{j(j-1)}{2}\leq i\leq\tfrac{j(j+1)}{2}$,
$\mathbb{H}_j$ is $j$-dimension subspace of $\mathbb{H}$.
For any given positive sequence $\{\varepsilon_j\}_{1}^{\infty}$ such that
$\varepsilon_j\to0$ and $\sup\limits_{j\to\infty}(1+\varepsilon_j)^{j}\to+\infty$,
on $\mathbb{H}_j$ define:

\begin{eqnarray*}
S_j=\left[
   \begin{array}{cccc}
    0 & 2\varepsilon_j &       &              \\
      & \ddots         &\ddots &              \\
      &                &\ddots &2\varepsilon_j\\
      &                &       &0
    \end{array}
 \right]_{j\times j},
K_j=\left[
   \begin{array}{cccc}
    -\varepsilon_j& 2\varepsilon_j &       &  \\
      & \ddots         &\ddots &              \\
      &                &\ddots &2\varepsilon_j\\
      &                &       &-\varepsilon_j
    \end{array}
 \right]_{j\times j}
 \end{eqnarray*}.

\begin{eqnarray*}
I_j+K_j=\left[
   \begin{array}{cccc}
    1-\varepsilon_j& 2\varepsilon_j &       &  \\
      & \ddots         &\ddots &              \\
      &                &\ddots &2\varepsilon_j\\
      &                &       &1-\varepsilon_j
    \end{array}
 \right]_{j\times j}=(1-\varepsilon_j)I_j+S_j
  \end{eqnarray*}.

First to prove that $(I+K_\varepsilon)$ is Li-Yorke chaotic.

Let $I+K_\varepsilon=\bigoplus\limits_{j=1}^{+\infty}(I+K_j)$,
$f_j=\frac{1}{\sqrt{j}}(1_1,\cdots,1_j)$ and $f_{j,n}=\frac{1}{\sqrt{j}}(1_1,\cdots,1_n,0,\cdots,0)\in\mathbb{H}_j$.
for any $1\leq n\leq j$ we get

$\left.
   \begin{array}{l}
\|(I+K_\varepsilon)^n(f_j)\|\\
=\|(I_j+K_j)^n(f_j)\|\\
=\|((1-\varepsilon_j)I_j+S_j)^n(f_j)\|\\
=\|\sum\limits_{k=0}^{n}C_{n}^{k}(1-\varepsilon_j)^{k}S_j^{n-k}f_j\|\\
\geq\|\sum\limits_{k=0}^{n}C_{n}^{k}(1-\varepsilon_j)^{k}(2\varepsilon_j)^{n-k}(1_1,\cdots,1_n,0,\cdots,0)\|\\
=(1+\varepsilon_j)^n\|f_{j,n}\|.
\end{array}\right.$

Hence we get

$(a)$ $\varlimsup\limits_{j\to\infty}\|(I+K_\varepsilon)^j(f_j)\|\geq\lim\limits_{n\to\infty}\|f_j\|(1+\varepsilon_j)^j=+\infty$.

Because of $r_{\sigma}(I+K_\varepsilon)<1$,we get

$(b)$ $\lim\limits_{n\to\infty}\|(I+K_\varepsilon)^n(f_j)\|=0$.

By $(a)(b)$ and by Definition $\ref{liyorkechaoscriteriondingyi0}$ we get that $\lambda I+K_\varepsilon$ satisfies the Li-Yorke Chaos Criterion,
by Theorem $\ref{liyorkechaoscriteriondingli0}$ we get that $\lambda I+K_\varepsilon$ is Li-Yorke chaotic.

Then to prove that $(I+K_\varepsilon)^{-1}$ is not Li-Yorke chaotic.
For convenience we define $n?^m$ by induction on $m$ for any given $n\in\mathbb{N}$.

For any given $j\in\mathbb{N}$,define:

$(1)$ $j?=1+2+\cdots+j$,

$(2)$ If defined $j?^{n}$ ,
then define $j?^{n+1}=1?^n+2?^n+\cdots+j?^n$.

Let $A=\bigoplus\limits_{j=1}^{+\infty}A_j$,where
$A_j=(I_j+K_j)^{-1}$.Because of $A_j(I_j+K_j)=A_j(I_j+K_j)=I_j$,
we get $A(I+K_\varepsilon)=(I+K_\varepsilon)A=\bigoplus\limits_{j=1}^{+\infty}I_j=I$.
By the Banach Inverse Mapping Theorem \cite{JohnBConway1990}P91 we get that $A=(I+K_\varepsilon)^{-1}$ is a bounded linear operator,
that is,$A=(I+K_\varepsilon)^{-1}$.Hence we get

\begin{eqnarray*}
A_j=\frac{1}{1-\varepsilon_j}\left[
   \begin{array}{cccccc}
   1  & -2\varepsilon_j &       &(-2)^{j-1}\varepsilon_{j}^{j-1}  \\
      & \ddots         &\ddots &              \\
      &                &\ddots &-2\varepsilon_j\\
      &                &       &1
    \end{array}
 \right]_{j\times j}
\end{eqnarray*}.

\begin{eqnarray*}
A_j^{2}
=\frac{1}{(1-\varepsilon_j)^2}\left[
   \begin{array}{cccccc}
   1  & 2\cdot(-2)\varepsilon_j &       &j\cdot(-2)^{j-1}\varepsilon_{j}^{j-1}  \\
      & \ddots         &\ddots &              \\
      &                &\ddots &2\cdot(-2)\varepsilon_j\\
      &                &       &1
    \end{array}
 \right]_{j\times j}
\end{eqnarray*}.

\begin{eqnarray*}
A_j^{3}
=\frac{1}{(1-\varepsilon_j)^3}\left[
   \begin{array}{cccccc}
    1  & (1+2)(-2)\varepsilon_j &       &j?\cdot(-2)^{j-1}\varepsilon_{j}^{j-1}  \\
      & \ddots         &\ddots &              \\
      &                &\ddots &2?(-2)\varepsilon_j\\
      &                &       &1
    \end{array}
 \right]_{j\times j}
\end{eqnarray*}.

For $m\geq3,m\in\mathbb{N}$,If defined

\begin{eqnarray*}
A_j^{m}
=\frac{1}{(1-\varepsilon_j)^m}\left[
   \begin{array}{cccc}
    1 & 2?^{m-2}(-2)\varepsilon_j &       &j?^{m-2}(-2)^{j-1}\varepsilon_j^{j-1}  \\
      & \ddots                     &\ddots &              \\
      &                            &\ddots &2?^{m-2}(-2)\varepsilon_j\\
      &                            &       &1
    \end{array}
 \right]_{j\times j}
\end{eqnarray*}.

Then define

\begin{eqnarray*}
A_j^{(m+1)}=AA^m
=\frac{1}{(1-\varepsilon_j)^{(m+1)}}
\left[
   \begin{array}{cccccc}
     1 & 2?^{m-1}(-2)\varepsilon_j &       &j?^{m-1}(-2)^{j-1}\varepsilon_j^{j-1} \\
       & \ddots                         &\ddots &              \\
       &                                &\ddots &2?^{m-1}(-2)\varepsilon_j\\
       &                                &       &1
    \end{array}
 \right]_{j\times j}
\end{eqnarray*}.

For any given $0\neq x_0=(x_1,x_2,\cdots,)\in\mathbb{H}$,
Let
\begin{eqnarray*}
\left.\begin{array}{l}
y_j=(x_{(1+\frac{j(j-1)}{2})},\cdots,x_{\frac{j(j+1)}{2}});\\
y_j^{\prime}=(x_{(2+\tfrac{j(j-1)}{2})},\cdots,x_{(\frac{j(j+1)}{2}-1)});\\
z_{j,m}=x_{(1+\frac{j(j-1)}{2})}+\sum\limits_{k=2}^{j}k?^{(m-2)}(-2)^{k-1}\varepsilon_j^{k}.
\end{array}\right.
\end{eqnarray*}.

Following a brilliant idea of Zermelo,we shall give the conclusion by induction.

$(1)$ If $y_1\neq0$,then we get

$\varliminf\limits_{n\to\infty}\|A^nx_0\|=\varliminf\limits_{n\to\infty}\sum\limits_{j=1}^{+\infty}\|A_j^ny_j\|\geq$
$\varliminf\limits_{n\to\infty}\|A_1^ny_1\|=\varliminf\limits_{n\to\infty}\frac{|x_1|}{(1-\varepsilon_1)^n}=+\infty$.

$(2)$ If $y_1=0$,but $y_2=(x_2,x_3)\neq0$.

$(i)$ If $x_3\neq0$,
by $(1)$ we get

$\varliminf\limits_{n\to\infty}\|A^nx_0\|\geq\varliminf\limits_{n\to\infty}\frac{|x_3|}{(1-\varepsilon_1)^n}\to+\infty$.

$(ii)$ If $x_3=0$ and $\varepsilon_2>\frac{1}{2?^{(n-2)}}$,because of $y_2=(x_2,x_3)\neq0$,we get

$\varliminf\limits_{n\to\infty}\|A^nx_0\|=\varliminf\limits_{n\to\infty}\sum\limits_{j=1}^{+\infty}\|A_j^ny_j\|$
$\geq\varliminf\limits_{n\to\infty}\|A_2^ny_2\|$

$=\varliminf\limits_{n\to\infty}\frac{1}{(1-\varepsilon_1)^n}\sqrt{(x_2^2+(2?^{n-2}(-2)\varepsilon_2x_3)^2)}$
$\geq\varliminf\limits_{n\to\infty}\frac{|x_2|}{(1-\varepsilon_1)^n}$
$=+\infty$.

$(3)$ Assume for $k\leq m-1$,
there is $\varliminf\limits_{n\to\infty}\|A^nx_0\|\to+\infty$ for $A^{k}$ and $y_k\neq0$.
Then for $k=m$ and $y_m\neq0$ we get

$(i)$ If $x_{m?}\neq0$,
by $(1)$ we get

$\varliminf\limits_{n\to\infty}\|A^nx_0\|\geq\varliminf\limits_{n\to\infty}\frac{|x_{m?}|}{(1-\varepsilon_1)^n}=+\infty$.

$(ii)$ If $x_{m?}=0$ and $\varepsilon_m>\frac{1}{m?^{(n-2)}}$,because of $y_m\neq0$,we get

$\varliminf\limits_{n\to\infty}\|A^nx_0\|=\varliminf\limits_{n\to\infty}\sum\limits_{j=1}^{+\infty}\|A_j^ny_j\|$
$\geq\varliminf\limits_{n\to\infty}\|A_m^ny_m\|$

$=\varliminf\limits_{n\to\infty}\frac{1}{(1-\varepsilon_1)^n}$
$\sqrt{z_{m,n}^2+\|A_{m-1}^ny_m^{\prime}\|^2}$.

If $y_m^{\prime}\neq0$,by the induction hypothesis we get

$\varliminf\limits_{n\to\infty}\|A^nx_0\|\geq\varliminf\limits_{n\to\infty}\|A_{m-1}^ny_m^{\prime}\|=+\infty$;

If $y_m^{\prime}=0$,because of $y_m\neq0$ we get $x_{(1+\frac{m(m-1)}{2})}\neq0$.
by $(1)$ we get

$\varliminf\limits_{n\to\infty}\|A^nx_0\|\geq$
$\varliminf\limits_{n\to\infty}\frac{\|z_{m,n}\|}{(1-\varepsilon_1)^n}$
$=\varliminf\limits_{n\to\infty}\frac{|x_{(1+\frac{m(m-1)}{2})}|}{(1-\varepsilon_1)^n}=+\infty$.

Therefor for $k=m$ and $y_m\neq0$,
we get $\varliminf\limits_{n\to\infty}\|A^nx_0\|\to+\infty$,
by the induction we get that for any $m\in\mathbb{N}$ and $y_m\neq0$,there is  $\varliminf\limits_{n\to\infty}\|A^nx_0\|\to+\infty$.

From $(1)(2)(3)$ and $\mathbb{H}=\bigoplus\limits_{j=0}^{+\infty}\mathbb{H}_j$,
we get that for any given $0\neq x_0=(x_1,x_2,\cdots,)\in\mathbb{H}$ we can find $m\in\mathbb{N}$ such that $y_m\neq0$.
Hence for any given $0\neq x_0=(x_1,x_2,\cdots,)\in\mathbb{H}$ we get
$\varliminf\limits_{n\to\infty}\|A^nx_0\|=+\infty$.
Therefor $(I+K_\varepsilon)^{-1}$ is not Li-Yorke chaotic.
From the view of infinite matrix,
$(I+K_\varepsilon)^{*}$ and $(I+K_\varepsilon)^{*-1}$ are lower triangular matrix,
for any $0\neq x\in\mathbb{H}$,with a simple computing we get
$\varliminf\limits_{n\to\infty}\|(I+K_\varepsilon)^{*n}x\|>0$,
$\varliminf\limits_{n\to\infty}\|(I+K_\varepsilon)^{*-n}x\|>0$.
Hence $(I+K_\varepsilon)^{*}$ and $(I+K_\varepsilon)^{*-1}$ are not Li-Yorke chaotic.
\end{proof}

\begin{corollary}\label{fenbuhundundenibushiliyorkehundunde17}
There is a invertible bounded linear operator $I+K_\varepsilon$ on $\mathbb{H}=\mathcal{L}^2(\mathbb{N})$ such that $I+K_\varepsilon$ is distributionally chaotic,
but $(I+K_\varepsilon)^{-1}$,$(I+K_\varepsilon)^{*-1}$ and $(I+K_\varepsilon)^{*}$ are not distributionally chaotic.
\end{corollary}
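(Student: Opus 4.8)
The plan is to reuse the very operator $I+K_\varepsilon=\bigoplus_{j=1}^{\infty}(I_j+K_j)$ built in Example \ref{duijiaoxianyujxiaojinsuanzidenibuhundun14}, where each block is $I_j+K_j=(1-\varepsilon_j)I_j+S_j$ with $S_j$ the $j\times j$ nilpotent weighted shift of weight $2\varepsilon_j$, and to prove the two halves of the statement separately: that $I+K_\varepsilon$ is distributionally chaotic, and that its inverse, adjoint and adjoint-inverse are not. The crucial observation is that, block for block, this operator has exactly the form of the distributionally chaotic operator of Theorem \ref{duijiaoxianyujxiaojinsuanzidingliyinyong13} taken from \cite{HoubingzheTiangengShiluoyi2009}; the only modification is the block dimension $n_j=j$ in place of $n_j=2m_j$, together with the diagonal entry $1-\varepsilon_j$ that guarantees each block (hence the whole operator) is invertible.

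For the positive half I would exhibit a distributionally chaotic point built from the same vectors $f_j=\tfrac{1}{\sqrt{j}}(1_1,\dots,1_j)\in\mathbb{H}_j$ used in Example \ref{duijiaoxianyujxiaojinsuanzidenibuhundun14}. First I would recall the lower estimate already obtained there, $\|(I+K_\varepsilon)^n f_j\|\ge(1+\varepsilon_j)^n\|f_{j,n}\|$ for $1\le n\le j$, which combined with the standing hypothesis $\sup_j(1+\varepsilon_j)^j\to+\infty$ produces orbit segments of arbitrarily large norm around time $n\approx j$. Second, since the restriction $I_j+K_j$ has spectrum $\{1-\varepsilon_j\}$ with $1-\varepsilon_j<1$, one has $\|(I+K_\varepsilon)^n f_j\|\to0$ as $n\to\infty$, so each spike is transient and every orbit of a finitely supported vector decays. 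I would then assemble an infinite combination $x_0=\sum_j c_j f_j$, choosing the coefficients so that the large-norm windows and the long near-zero stretches occur on time sets of controlled relative density; this yields $F_{x_0}(\tau)=0$ for some $\tau>0$ and $F_{x_0}^{*}(\epsilon)=1$ for all $\epsilon>0$, which is precisely $I$-distributional chaos in the sense of Definition \ref{fenbuhundundedingyi2}.

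For the negative half I would invoke the implication, valid on any Banach space and already used inside the proof of Theorem \ref{hardykongjianshangchengfasuanziyuyuanzhoujiaofeikongdedingli17}, that distributional chaos implies Li-Yorke chaos; contrapositively, an operator that is not Li-Yorke chaotic is not distributionally chaotic. Since Example \ref{duijiaoxianyujxiaojinsuanzidenibuhundun14} already establishes that $(I+K_\varepsilon)^{-1}$, $(I+K_\varepsilon)^{*-1}$ and $(I+K_\varepsilon)^{*}$ fail to be Li-Yorke chaotic, none of the three can be distributionally chaotic, and the corollary follows.

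The main obstacle is the positive half: arranging the time averages so that the fraction of iterates below a small threshold has $\limsup$ equal to $1$ while the fraction below a fixed threshold has $\liminf$ equal to $0$. The delicate point is selecting the coefficients $c_j$, and implicitly the decay rates governed by $\varepsilon_j$ and the block dimensions, so that the near-zero plateaus and the norm spikes fall on time windows of the correct relative density. This quantitative bookkeeping is exactly the content of the distributional-chaos construction of \cite{HoubingzheTiangengShiluoyi2009}, so the only genuinely new verification is that the modified dimensions $n_j=j$ and the invertibility-preserving diagonal $1-\varepsilon_j$ leave those density estimates intact.
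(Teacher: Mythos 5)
Your proposal matches the paper's own (very terse) proof of this corollary: the paper likewise obtains the distributional chaos of $I+K_\varepsilon$ by deferring to the block construction of Theorem \ref{duijiaoxianyujxiaojinsuanzidingliyinyong13} (the density estimates of \cite{HoubingzheTiangengShiluoyi2009}, which neither you nor the paper re-derives for the modified blocks) and rules out chaos of $(I+K_\varepsilon)^{-1}$, $(I+K_\varepsilon)^{*-1}$ and $(I+K_\varepsilon)^{*}$ via the induction of Example \ref{duijiaoxianyujxiaojinsuanzidenibuhundun14}. Your only deviation is cosmetic: you dispatch the negative half by combining Example \ref{duijiaoxianyujxiaojinsuanzidenibuhundun14} with the implication that distributional chaos implies Li--Yorke chaos, instead of rerunning the induction, which is a slight streamlining of the same argument.
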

\begin{proof}
By the construction of Theorem $\ref{duijiaoxianyujxiaojinsuanzidingliyinyong13}$,
it is only to give the conclusion by induction on $\{k_i\}_{i=1}^{\infty}$ as  Example $\ref{duijiaoxianyujxiaojinsuanzidenibuhundun14}$.
\end{proof}

\begin{theorem}\label{cunzaisuanzishuzhishiyiduankaiyuanshu7}
There is $T\in\mathcal{L}(\mathbb{H})$,
$S_{LY}(T)=S_{DC}(T)=\omega$ is an open arc of $\mathbb{T}=\{|\lambda|=1;\lambda\in\mathbb{C}\}$,
and for $\forall \lambda\in\omega$,we get that
$(\lambda+T)^{*}$,$(\lambda+T)^{*-1}$ and $(\lambda+T)^{-1}$ are not Li-Yorke chaotic.
\end{theorem}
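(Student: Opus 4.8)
The plan is to realise $\omega$ as the locus where a direct sum of finite Jordan--type blocks switches from contracting to expanding once the scalar $\lambda$ is added, in the spirit of Example~\ref{duijiaoxianyujxiaojinsuanzidenibuhundun14}. Concretely I would fix a positive sequence $\varepsilon_j\to 0$ with $\varepsilon_j n_j\to\infty$ (say $\varepsilon_j=j^{-1/2}$, $n_j=j$), write $\mathbb{H}=\bigoplus_j\mathbb{H}_j$ with $\dim\mathbb{H}_j=n_j$, and on $\mathbb{H}_j$ set $B_j=-\varepsilon_j I_j+S_j$, where $S_j$ is the nilpotent backward shift carrying the single off--diagonal weight $2\varepsilon_j$. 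Then $T=\bigoplus_j B_j$ is compact ($\|B_j\|\le 3\varepsilon_j\to 0$) and $\lambda I+T=\bigoplus_j\bigl[(\lambda-\varepsilon_j)I_j+S_j\bigr]$. I claim the arc is $\omega=\{e^{i\theta}:\cos\theta>0\}$, the open right half of $\mathbb{T}$.

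First I would pin down $S_{LY}(T)$. For $\lambda=e^{i\theta}$ the $j$-th block has spectral radius $|\lambda-\varepsilon_j|$ with $|\lambda-\varepsilon_j|^2=1-2\varepsilon_j\cos\theta+\varepsilon_j^2$, so $|\lambda-\varepsilon_j|<1$ exactly when $\cos\theta>\varepsilon_j/2$; when $\cos\theta>0$ this holds for every large $j$. On such a block $\|(\lambda I+T)^n x\|\to 0$, supplying the $\varliminf=0$ half, while applying the binomial expansion to the \emph{top} basis vector $e_{n_j}^{(j)}$ (whose images occupy distinct coordinates, so no complex cancellation occurs) and using $|\lambda-\varepsilon_j|+2\varepsilon_j\ge 1+\varepsilon_j$ gives the transient bound $\|(\lambda I+T)^{\,n_j-1}e_{n_j}^{(j)}\|\ge n_j^{-1/2}(1+\varepsilon_j)^{\,n_j-1}\to\infty$. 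Thus the vectors $e_{n_j}^{(j)}$ furnish the set $X_0$ and the unbounded sequence required by the Li--Yorke Chaos Criterion (Theorem~\ref{liyorkechaoscriteriondingli0}), so $\lambda I+T$ is Li--Yorke chaotic for $\lambda\in\omega$. For $\cos\theta\le 0$ every block satisfies $|\lambda-\varepsilon_j|\ge 1$, indeed $|\lambda-\varepsilon_j|-2\varepsilon_j>1$ for large $j$, so $\lambda I+T$ is bounded below and $\varliminf_n\|(\lambda I+T)^n x\|>0$; the cases $|\lambda|\ne 1$ are excluded by the spectral--radius argument of Lemma~\ref{jinmilingsuanzishuzhipu4}(1)(2). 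Hence $S_{LY}(T)=\omega$ is an open arc, and refining the block sizes and their multiplicities as in Corollary~\ref{fenbuhundundenibushiliyorkehundunde17} promotes these same estimates to distributional chaos, giving $S_{DC}(T)=\omega$.

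Next I would dispatch the two inverses. For $\lambda\in\omega$ each block is invertible with $\sup_j\|((\lambda-\varepsilon_j)I_j+S_j)^{-1}\|<\infty$, so $\lambda I+T$ is invertible; both $(\lambda I+T)^{-1}$ and $(\lambda I+T)^{*-1}$ are block diagonal with block eigenvalue $(\lambda-\varepsilon_j)^{-1}$ of modulus $|\lambda-\varepsilon_j|^{-1}>1$, hence are bounded below, and the inductive matrix computation of Example~\ref{duijiaoxianyujxiaojinsuanzidenibuhundun14} yields $\varliminf_n\|(\lambda I+T)^{-n}x\|>0$ and $\varliminf_n\|(\lambda I+T)^{*-n}x\|>0$; neither is Li--Yorke chaotic. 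It remains to treat $(\lambda I+T)^{*}=\bigoplus_j\bigl[(\bar\lambda-\varepsilon_j)I_j+S_j^{*}\bigr]$, a lower--triangular (forward--shift) operator, for which I would attempt the coordinate--tracking argument of Lemma~\ref{jinmilingsuanzishuzhipu4}(5) and Example~\ref{duijiaoxianyujxiaojinsuanzidenibuhundun14}: follow the first non--zero coordinate of $x$ under $S_j^{*}$ and bound $\varliminf_n\|(\lambda I+T)^{*n}x\|$ from below.

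The main obstacle is precisely this last operator. Because $\|(\lambda I+T)^{*n}\|=\|(\lambda I+T)^{n}\|$, the adjoint inherits the very same transient blow--up, so its non--chaos cannot come from norm growth and must be wrung out of the fine action of $S_j^{*}$. The snag is that the coordinate--preservation trick of Lemma~\ref{jinmilingsuanzishuzhipu4} succeeds there only because the diagonal has modulus exactly $1$, whereas here the block diagonal $\bar\lambda-\varepsilon_j$ has modulus strictly below $1$, so the first coordinate decays and the same bottom--vector estimate that proves chaos for $\lambda I+T$ applies verbatim to $(\lambda I+T)^{*}$. Making $\varliminf_n\|(\lambda I+T)^{*n}x\|>0$ genuinely hold is therefore the delicate heart of the theorem; should the decay mechanism prove irreconcilable with adjoint non--chaos, I would instead place the block eigenvalues on $\mathbb{T}$ and drive the chaos by the mixing criterion (Theorem~\ref{duijiaoxianyujiaquanyiweizhuanzhihunhedingliyinyong11}), for which the modulus--one diagonal restores the bounded--below adjoint, and then engineer the arc through the geometry of the block spectra rather than through contraction.
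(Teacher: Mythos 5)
Your construction coincides with the paper's: the same block--diagonal compact perturbation $T=\bigoplus_j(-\varepsilon_jI_j+S_j)$ with block sizes $n_j=j$, the same arc $\omega=\{e^{i\theta}:\cos\theta>0\}$, the same three--way split according to whether $|\lambda-\varepsilon_j|$ is eventually $<1$, $\ge 1$, or $|\lambda|\neq1$, the same transient estimate $(|\lambda-\varepsilon_j|+2\varepsilon_j)^{n}\ge(1+\varepsilon_j)^{n}$ feeding the Li--Yorke Chaos Criterion, and the same treatment of the two inverses by noting that the inverse blocks have spectral radius $>1$ so every orbit is eventually bounded below. On all of these points you are reproducing the paper's proof (which routes the inverse computations through Example~\ref{duijiaoxianyujxiaojinsuanzidenibuhundun14}), and those parts are sound.

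The gap is exactly the one you flagged and then left open: the assertion that $(\lambda I+T)^{*}$ is not Li--Yorke chaotic is never proved, and for this construction it cannot be, because it is false. On each block let $J_j$ be the coordinate--reversing unitary $e_k\mapsto e_{j+1-k}$; since $S_j$ has real entries, $J_jS_j^{*}J_j=S_j$, hence
$(\lambda I_j+K_j)^{*}=(\bar\lambda-\varepsilon_j)I_j+S_j^{*}=J_j\bigl[(\bar\lambda-\varepsilon_j)I_j+S_j\bigr]J_j$,
and summing over $j$ shows $(\lambda I+T)^{*}$ is unitarily equivalent to $\bar\lambda I+T$. The arc $\omega$ is invariant under conjugation, so for $\lambda\in\omega$ the adjoint is Li--Yorke (indeed distributionally) chaotic precisely because $\bar\lambda I+T$ is; your observation that the bottom--vector blow--up ``applies verbatim to the adjoint'' is this equivalence seen coordinatewise. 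The paper does not escape the problem: its proof of this step only says ``using the same proof of Example~\ref{duijiaoxianyujxiaojinsuanzidenibuhundun14}'', and that example's claim that the lower--triangular $(I+K_\varepsilon)^{*}$ satisfies $\varliminf_n\|(I+K_\varepsilon)^{*n}x\|>0$ fails for the same reason --- the first--nonzero--coordinate trick of Lemma~\ref{jinmilingsuanzishuzhipu4}(5) requires the diagonal to have modulus at least $1$, whereas here it is $1-\varepsilon_j<1$ (resp.\ $|\lambda-\varepsilon_j|<1$), so the tracked coordinate decays while the transient mass migrates down the block and the orbit still dips to $0$ after blowing up. Your proposed fallback (block eigenvalues on $\mathbb{T}$, chaos via the mixing criterion) does not repair this either: the flip equivalence is structural for any real--entried block--diagonal operator, so the adjoint would again be unitarily equivalent to $\bar\lambda I+T$ and inherit whatever chaos the operator has. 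As it stands, neither your proposal nor the paper establishes the adjoint clause of the theorem; the remaining clauses ($S_{LY}(T)=S_{DC}(T)=\omega$ open, and non--chaos of $(\lambda+T)^{-1}$ and $(\lambda+T)^{*-1}$) do go through.
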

\begin{proof}
As Example $\ref{duijiaoxianyujxiaojinsuanzidenibuhundun14}$,give the same $\mathbb{H}=\bigoplus\limits_{j=1}^{+\infty}\mathbb{H}_j$,$S_j$ and $K_j$,
give positive sequence $\{\varepsilon_j\}_{1}^{\infty}$ such that
$\varepsilon_j\to0$ and $\sup\limits_{j\to\infty}|i+\varepsilon_j|^{j}\to+\infty$,where $i\in\mathbb{C}$.

Let $\lambda I+K_\varepsilon=\bigoplus\limits_{j=1}^{+\infty}(\lambda I+K_j)$,
so $\sigma(\lambda I+K_\varepsilon)=\{\lambda -\varepsilon_j;j\in\mathbb{N}\}$.

$(i)$ If $|\lambda|<1$,because of $\varepsilon_j\to0$,
we get that $\exists N>0$ when $n>N$,
$|\lambda -\varepsilon_j|<1$.
With the introduction of this paper we get that Li-Yorke chaos is valid only on infinite Hilbert space.
Loss no generally,for any $j\in\mathbb{N}$,let $|\lambda -\varepsilon_j|<1$,
so $r_{\sigma(\lambda I+K_\varepsilon)}<1$.
Hence for any $0\neq x\in\mathbb{H}$ there is
$\lim\limits_{n\to\infty}\|(I+K_\varepsilon)^n(x)\|=0$.

$(ii)$ If $|\lambda|>1$ or $\lambda\in[\frac{\pi}{2},\frac{3\pi}{2}]$,
because of $\varepsilon_j>0$,
for $j\in\mathbb{N}$ we get $|\lambda -\varepsilon_j|>1$,
$\frac{1}{|\lambda -\varepsilon_j|}<1$,
and $\sigma(\lambda I+K_\varepsilon)^{-1}=\{\frac{1}{\lambda -\varepsilon_j};j\in\mathbb{N}\}$.
By Example $\ref{duijiaoxianyujxiaojinsuanzidenibuhundun14}$,
for any given $x\neq0$ there is $y_m\neq0,m\in\mathbb{N}$,hence we get

$\left.\begin{array}{l}
\lim\limits_{n\to\infty}\|(\lambda I+K_\varepsilon)^nx_0\|\\
\geq\lim\limits_{n\to\infty}\|(\lambda I_m+K_m)^ny_m\|\\
\geq\lim\limits_{n\to\infty}\frac{1}{\|(\lambda I_m+K_m)^{-n}\|}\|y_m\|\\
\geq\lim\limits_{n\to\infty}\frac{1}{\|(\lambda I_m+K_m)^{-1}\|^n}\|y_m\|\\
\geq\|y_m\|>0.
\end{array}\right.$

$(iii)$ For $\forall \lambda\in(-\frac{\pi}{2},\frac{\pi}{2})$,
because of $ \varepsilon_j\to0$,
there exists $N>0$, when $j>N$, we get $|\lambda-\varepsilon_j|<1$.
Let $\mathbb{H}^{'}=\bigoplus\limits_{j>N}\mathbb{H}_j$,
$(\lambda I+K_\varepsilon)^{'}=(\lambda I+K_\varepsilon)|_{\bigoplus\limits_{j>N}\mathbb{H}_j}$,
then for $f_j=\frac{1}{\sqrt{j}}(1_1,\cdots,1_j)$ and $f_{j,n}=\frac{1}{\sqrt{j}}(1_1,\cdots,1_n,0,\cdots,0)\in\mathbb{H}^{'}$,
we get that when $1\leq n\leq j$,there is

$\left.
   \begin{array}{l}
\|(\lambda I+K_\varepsilon)^n(f_j)\|\\
=\|(\lambda I_j+K_j)^n(f_j)\|\\
=\|((\lambda-\varepsilon_j)I_j+S_j)^n(f_j)\|\\
\geq\|\sum\limits_{k=0}^{n}C_{n}^{k}(\lambda-\varepsilon_j)^{k}(2\varepsilon_j)^{n-k}(1_1,\cdots,1_n,0,\cdots,0)\|\\
=|\lambda+\varepsilon_j|^n\|f_{j,n}\|.
\end{array}\right.$

By $(i)(ii)$,if $|\lambda|\neq1$ or $\lambda\in[\frac{\pi}{2},\frac{3\pi}{2}]$,
$\lambda I+K_\varepsilon$ is not Li-Yorke chaotic.

By $(iii)$ and by the property of the triangle,if $\lambda\in(-\frac{\pi}{2},\frac{\pi}{2})$ and  $j>N$,
we get $|\lambda+\varepsilon_j|>|i+\varepsilon_j|$ and $r_{\sigma}((\lambda I+K_\varepsilon)^{'})<1$.
Hence we get

$\lim\limits_{n\to\infty}\|(\lambda I+K_\varepsilon)^{'n}(f_j)\|=0$, and

$\varlimsup\limits_{j\to\infty}\|(I+K_\varepsilon)^{'j}(f_j)\|$
$\geq\lim\limits_{j\to\infty}\|f_j\||\lambda+\varepsilon_j|^j\geq\lim\limits_{j\to\infty}|i+\varepsilon_j|^j=+\infty$.

By Definition $\ref{liyorkechaoscriteriondingyi0}$ we get that $\lambda I+K_\varepsilon$ satisfies the Li-Yorke Chaos Criterion,
by Theorem $\ref{liyorkechaoscriteriondingli0}$ we get that $\lambda I+K_\varepsilon$ is Li-Yorke chaotic.

Using the same proof of Example $\ref{duijiaoxianyujxiaojinsuanzidenibuhundun14}$ we get that
$(\lambda I+K_\varepsilon)^{*}$,$(\lambda I+K_\varepsilon)^{*-1}$ and $(\lambda I+K_\varepsilon)^{-1}$ are not Li-Yorke chaotic.

Using Corollary $\ref{fenbuhundundenibushiliyorkehundunde17}$ and Theorem $\ref{duijiaoxianyujxiaojinsuanzidingliyinyong13}$ we get that $\lambda I+K_\varepsilon$ is distributionally chaotic,but $(\lambda I+K_\varepsilon)^{*}$,$(\lambda I+K_\varepsilon)^{*-1}$ and $(\lambda I+K_\varepsilon)^{-1}$ are not Li-Yorke chaotic.
\end{proof}


\begin{conjecture}
For any given $m\in\mathbb{N}$,
there exists $m$-folder complex analytic function $\phi(z)\in\mathcal{H}^{\infty}(\mathbb{D})$  such that $\phi$ is not a Cowen-Douglas function.
\end{conjecture}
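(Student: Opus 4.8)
The plan is to exhibit, for each $m$, an $m$-folder function whose rooter functions fail to be outer, and to show that this failure alone already destroys condition $(d)$ of Definition \ref{cowendouglassuanzidingyi2} for $M_{\phi}^{*}$. The starting point is the factorization used in the proof of Theorem \ref{cowendouglussuanziyujiexichengfasuanzijigezhonghundundeguanxi24}: if $\phi$ is $m$-folder and $\lambda=\phi(z_0)$, then $\phi-\lambda=p_m h$ with $p_m$ monic of degree $m$ whose roots are the preimages of $\lambda$ in $\mathbb{D}$ and $h=h_{z_0}$ the (zero-free) rooter function, so that $M_{\phi}^{*}-\bar\lambda=M_{p_m h}^{*}=M_{h}^{*}M_{p_m}^{*}$. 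Since $M_{p_m}^{*}$ is a surjection with $\dim\ker M_{p_m}^{*}=m$, restricting $M_{p_m}^{*}$ to $\ker(M_{h}^{*}M_{p_m}^{*})$ gives a surjection onto $\ker M_{h}^{*}$ with kernel $\ker M_{p_m}^{*}$, whence $\dim\ker(M_{\phi}^{*}-\bar\lambda)=m+\dim\ker M_{h}^{*}$, where $\ker M_{h}^{*}=(\,\overline{h\,\mathcal{H}^{2}(\mathbb{D})}\,)^{\perp}$. Thus the single quantity that controls condition $(d)$ is $\dim\ker M_{h}^{*}$, and this vanishes exactly when $h$ is outer.

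Next I would record the inner--outer mechanism that produces a non-outer rooter function. Writing the inner--outer factorization $\phi-\lambda=B\,S\,O$ with $B$ the degree-$m$ Blaschke product on the zeros, $S$ singular inner and $O$ outer, the ratio $B/p_m$ equals $u\big/\prod_i(1-\bar a_i z)$ with $u$ unimodular and $a_i$ the zeros, which is outer; hence the inner part of $h$ is precisely the singular inner factor $S$ of $\phi-\lambda$. So $h$ is non-outer if and only if $\phi-\phi(z_0)$ carries a nontrivial singular inner factor $S$, and in that case $\overline{h\,\mathcal{H}^{2}}=S\,\mathcal{H}^{2}$, so $\ker M_{h}^{*}=\mathcal{H}^{2}\ominus S\,\mathcal{H}^{2}$ is the model space of $S$, which is infinite dimensional. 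Consequently $\dim\ker(M_{\phi}^{*}-\bar\lambda)=\infty$ at the point $\overline{\phi(z_0)}\in\bar\phi(\mathbb{D})$, so condition $(d)$ of Definition \ref{cowendouglassuanzidingyi2}, which demands a finite constant value $n$ of $\dim\ker(M_{\phi}^{*}-\omega)$ on the open set, cannot hold for any $n$; hence $M_{\phi}^{*}\notin\mathcal{B}_n(\bar\phi(\mathbb{D}))$ and, by Definition \ref{youjiejiexihanshuliCowenDouglashanshudingyi47}, $\phi$ is not a Cowen-Douglas function. It therefore suffices to build, for each $m$, an $m$-folder $\phi$ for which some $\phi-\phi(z_0)$ has a singular inner factor.

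For the reduction to the univalent case I would use the substitution $\phi(z)=\psi(z^{m})$, where $\psi$ is a bounded univalent function. Since $\psi$ is injective, $\phi(z)=\phi(z')$ forces $z^{m}=z'^{\,m}$, so $\phi$ is exactly $m$-to-one off the origin with an order-$m$ branch point at $0$; thus $\phi$ is $m$-folder in the sense of Definition \ref{nyejiexihanshudedingyijitoubujiaobu51}. If $\lambda_0=\psi(\zeta_0)$ is chosen so that $\psi-\lambda_0$ has a nontrivial singular inner factor $S(w)$, then $\phi-\phi(z_0)=\psi(z^{m})-\lambda_0$ carries the factor $S(z^{m})$; because $z\mapsto z^{m}$ is inner and $S$ is zero-free, $S(z^{m})$ is again a zero-free inner function, i.e.\ a nontrivial singular inner function. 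Then the rooter function of $\phi$ at $z_0$ is non-outer and, by the previous paragraph, $\phi$ is not a Cowen-Douglas function. This collapses the whole conjecture to the single case $m=1$.

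The main obstacle is exactly this base case: producing a bounded univalent $\psi$ on $\mathbb{D}$ such that $\psi-\psi(\zeta_0)$ has a nonzero singular inner factor. Here the elementary modulus estimate is useless, since $\psi(\zeta_0)$ is an interior value and $|\psi-\psi(\zeta_0)|$ stays bounded below on $\mathbb{T}$, which constrains only the outer factor and leaves a singular factor invisible on the boundary. I expect the construction to come from the theory of singular inner functions and of non-Smirnov domains (of Keldysh--Lavrentiev type): one chooses the target $\Omega=\psi(\mathbb{D})$ with sufficiently pathological boundary so that a singular mass appears in the Riesz--Herglotz data attached to $\psi-\psi(\zeta_0)$, and then verifies that this mass survives division by the Blaschke and outer parts. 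Making this quantitative --- exhibiting such $\Omega$ explicitly and confirming that $\psi-\psi(\zeta_0)$, rather than merely $\psi'$, inherits the singular factor --- is the delicate step, and it is what separates the conjecture from the sufficient condition already proved in Theorem \ref{cowendouglussuanziyujiexichengfasuanzijigezhonghundundeguanxi24}.
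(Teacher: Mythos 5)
This statement is posed as a \emph{conjecture} in the paper, with no proof supplied, so there is nothing of the authors' to compare your argument against; I can only assess it on its own terms. Your first three paragraphs are essentially sound: the count $\dim\ker(M_{h}^{*}M_{p_m}^{*})=m+\dim\ker M_{h}^{*}$ is correct because $M_{p_m}^{*}$ is surjective; by Beurling's theorem $\ker M_{h}^{*}=\mathcal{H}^{2}\ominus I_h\mathcal{H}^{2}$ for the inner factor $I_h$ of $h$; since the rooter function is zero-free, $I_h$ coincides with the singular inner factor of $\phi-\lambda$ (the ratio of the finite Blaschke product to $p_m$ being invertible outer); and $\phi=\psi(z^{m})$ with $\psi$ univalent is indeed $m$-folder, with $S(z^{m})$ singular inner whenever $S$ is. So you have correctly reduced everything to the base case you flag at the end.

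The problem is that this base case is not merely ``delicate'' --- it is impossible, so the whole route collapses. Let $\psi$ be bounded and univalent, $c=\psi(\zeta_0)$, $\Omega=\psi(\mathbb{D})$, and let $g_{\Omega}$ be the Green's function. The function $v(w)=\log|w-c|+g_{\Omega}(w,c)$ is harmonic on $\Omega$ (the logarithmic singularities cancel), and since every boundary point of a bounded simply connected domain is regular, $g_{\Omega}(w,c)\to0$ at $\partial\Omega$, so the maximum principle gives $\log d(c,\partial\Omega)\le v\le\log(\mathrm{diam}\,\Omega)$ on $\Omega$. Conformal invariance gives $g_{\Omega}(\psi(z),c)=g_{\mathbb{D}}(z,\zeta_0)=-\log|b_{\zeta_0}(z)|$, hence
\begin{equation*}
\log\Bigl|\frac{\psi(z)-c}{b_{\zeta_0}(z)}\Bigr|=v(\psi(z))
\end{equation*}
is a \emph{bounded} harmonic function on $\mathbb{D}$, therefore equal to the Poisson integral of its boundary values $\log|\psi^{*}-c|$. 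That is exactly the statement that $(\psi-c)/b_{\zeta_0}$ is outer: the singular inner factor of $\psi-\psi(\zeta_0)$ is always trivial. The same computation, using the pullback formula $g_{\Omega}(\phi(z),\lambda)=\sum_{\phi(a)=\lambda}m_a\,g_{\mathbb{D}}(z,a)$ valid for proper $m$-to-one maps (and an exactly $m$-valent map is proper onto its image), shows that \emph{no} $m$-folder function in the sense of Definition \ref{nyejiexihanshudedingyijitoubujiaobu51} can have a non-outer rooter function: the entire mechanism ``singular inner factor $\Rightarrow$ condition $(d)$ fails'' is unavailable here. Worse for the conjecture itself: properness forces the boundary values of $\phi$ to lie in $\partial\phi(\mathbb{D})$ a.e., so for $\lambda\in\phi(\mathbb{D})$ the outer factor of $\phi-\lambda$ is invertible in $\mathcal{H}^{\infty}$ and $\phi-\lambda$ is a finite Blaschke product times an invertible outer function; all four conditions of Definition \ref{cowendouglassuanzidingyi2} then hold, which indicates that every $m$-folder function is a Cowen-Douglas function and the conjecture, as stated, is false. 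If you want to salvage something, the reading of ``$h_{z_0}(z)$ has non-zero point'' (zero-free versus merely not identically zero) is the only place where the definition leaves room.
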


\begin{question}
Gives the equivalent characterization of a $m$-folder complex analytic function;
Gives the equivalent characterization of a rooter function;
Gives the equivalent characterization of a Cowen-Douglas function.
Gives the relations between them.
\end{question}

\begin{question}
Let $M_{\phi}$ is the multiplication operator of the Cowen-Douglas function $\phi(z)\in\mathcal{H}^{\infty}(\mathbb{D})$ on the Hardy space $\mathcal{H}^{2}(\mathbb{D})$,
then is $M_{\phi}^{*}$ a Lebesgue operator? If not and if they have relations ,gives the relations between them.
\end{question}

\end{document}